\documentclass[12pt,reqno,centering]{amsart}
\numberwithin{equation}{section}
\usepackage{amssymb}
\usepackage{fullpage}
\usepackage{stackengine}
\usepackage{scalerel}
\usepackage{tikz}
\usetikzlibrary{patterns}
\definecolor{linkred}{rgb}{0.7,0.2,0.2}
\definecolor{linkblue}{rgb}{0,0.2,0.6}
\definecolor{linkgreen}{rgb}{0,0.6,0.2}
\usepackage[colorlinks,plainpages,backref,
    linkcolor=linkblue,
    citecolor=linkgreen,
    urlcolor=linkred]{hyperref}

\usepackage[all]{xy}
\usepackage{xcolor}
\usepackage{tikz}

\allowdisplaybreaks

\RequirePackage[sc]{mathpazo}
\RequirePackage[T1]{fontenc}
\usepackage{eucal}

\def\wt{\operatorname{wt}}
\def\PP{\operatorname{PP}}
\newtheorem{Thm}{Theorem}

\newtheorem{conj}[Thm]{Conjecture}

\newtheorem{theorem}{Theorem}[section]
\newtheorem{lemma}[theorem]{Lemma}
\newtheorem{prop}[theorem]{Proposition}
\newtheorem{coro}[theorem]{Corollary}
\newtheorem{example}[theorem]{Example}
\newtheorem{definition}[theorem]{Definition}
\newcommand{\BPD}[2][1.2pc]{%
\setlength{\unitlength}{#1}
\definecolor{lightcyan}{rgb}{0.8,1,1}%
\def\FF{%
    \qbezier(0.5,0)(0.5,0.2)(0.5,0.2)
    \qbezier(1,0.5)(0.8,0.5)(0.8,0.5)
    \qbezier(0.8,0.5)(0.5,0.5)(0.5,0.2)    
}
\def\JJ{%
    \qbezier(0.5,1)(0.5,0.8)(0.5,0.8)
    \qbezier(0,0.5)(0.2,0.5)(0.2,0.5)
    \qbezier(0.5,0.8)(0.5,0.5)(0.2,0.5)
    }
\def\II{%
    \qbezier(0.5,0)(0.5,0.5)(0.5,1)}%
\def\HH{%
    \qbezier(0,0.5)(0.5,0.5)(1,0.5)}%
\def\XX{\NN\HH}
\def\NN{%
    \qbezier(0.5,0)(0.5,0.3)(0.5,0.3)
    \qbezier(0.5,1)(0.5,0.7)(0.5,0.7)}%
\def\LL{%
    \qbezier(0.5,1)(0.75,0.75)(1,0.5)}%
\def\ZZ{%
    \qbezier(0.5,0)(0.25,0.25)(0,0.5)}%
\def\BPDfr##1{%
\begin{picture}(1,1)%
    \linethickness{0.08\unitlength}
    ##1
    \thinlines
    \color{lightgray}%
    \put(0,0){\line(0,1){1}}%
    \put(1,0){\line(0,1){1}}%
    \put(0,0){\line(1,0){1}}%
    \put(0,1){\line(1,0){1}}%
\end{picture}}
\def\BPDfrc##1{\BPDfr{\put(0,0){\color{lightcyan}\rule{\unitlength}{\unitlength}}##1}}
\def\x{\BPDfrc{\XX}}
\def\f{\BPDfrc{\FF}}
\def\o{\BPDfrc{}}
\def\j{\BPDfrc{\JJ}}
\def\h{\BPDfrc{\HH}}
\def\i{\BPDfrc{\II}}
\def\O{\BPDfr{}}
\def\X{\BPDfr{\XX}}
\def\BX{\BPDfr{\II\HH}}
\def\F{\BPDfr{\FF}}
\def\J{\BPDfr{\JJ}}
\def\H{\BPDfr{\HH}}
\def\I{\BPDfr{\II}}
\def\L{\BPDfr{\LL}}
\def\Z{\BPDfr{\ZZ}}
\def\b{\BPDfr{\LL\ZZ}}
\def\B{\BPDfr{\JJ\FF}}
\def\M##1{\begin{picture}(1,1)%
    \put(0,0.2){\makebox[\unitlength]{$##1$}}
\end{picture}}
\begin{array}{@{\,}c@{\,}}
\def\dots{\scriptstyle\cdots}
{\def\arraystretch{0}
\setlength{\arraycolsep}{0pc}
\color{teal}
\begin{array}{@{}l@{}}%
#2\end{array}}
\end{array}}

\newcommand{\clan}[2][1.2pc]{%
\setlength{\unitlength}{\dimexpr#1/2}%
\def\drawclan##1{%
\expandafter%
    \drawclanbegin.##1%
    \drawclanend\drawclanendend}
\def\drawclanbegin.##1##2\drawclanendend{%
    \ifx\drawclanend##1%
        {\relax}%
    \else%
    \clandot{##1}%
    \expandafter%
        \drawclanbegin.##2\drawclanendend%
\fi}
\def\clandot##1{%
    \if+##1
    \begin{picture}(2,1)
        \linethickness{0.15\unitlength}
        \color{red}
        \qbezier(0.4,0.4)(1,0.4)(1.6,0.4)
        \qbezier(1,1.0)(1,0.4)(1,-.2)
    \end{picture}
    \else\if-##1
    \begin{picture}(2,1)
        \linethickness{0.15\unitlength}
        \color{blue}
        \qbezier(0.5,0.4)(1,0.4)(1.5,0.4)
    \end{picture}
    \else\if.##1
    \begin{picture}(2,1)
        \color{teal}
        \put(1,0.4){\circle*{0.7}}
    \end{picture}
    \else
        \def\inpA{##1}\def\inpB{\dots}%
    \ifx\inpA\inpB%
    \begin{picture}(2,1)
        \color{teal}
        \put(0.5,0.4){\circle*{0.3}}
        \put(1,0.4){\circle*{0.3}}
        \put(1.5,0.4){\circle*{0.3}}
    \end{picture}
    \else
    \def\inpA{##1}\def\inpB{}%
    \ifx\inpA\inpB%
        \relax%
    \else
        \def\inpA{##1}\def\inpB{\pm}%
    \ifx\inpA\inpB%
    \begin{picture}(2,1)
        \linethickness{0.15\unitlength}
          \color{red}
          \qbezier(0.4,0.4)(1,0.4)(1.6,0.4)
          \qbezier(1,1.0)(1,0.4)(1,-.2)
          \color{blue}
          \qbezier(0.4,-.2)(1,-.2)(1.6,-.2)
    \end{picture}
    \else
        \def\inpA{##1}\def\inpB{\mp}%
    \ifx\inpA\inpB%
    \begin{picture}(2,1)
        \linethickness{0.15\unitlength}
        \color{red}
        \qbezier(0.4,0.4)(1,0.4)(1.6,0.4)
        \qbezier(1,1.0)(1,0.4)(1,-.2)
        \color{blue}
        \qbezier(0.4,1.0)(1,1.0)(1.6,1.)
    \end{picture}
    \else\if,##1
    \begin{picture}(2,1)
        \linethickness{0.15\unitlength}
        \color{teal!50!white}
        \put(1,0.4){\circle*{0.7}}
    \end{picture}
    \else
    \begin{picture}(2,1)
        \color{white}\linethickness{4pt}
        \qbezier(1,0.4)({\numexpr(##1)+1},{\numexpr(##1)+1})({\numexpr(##1)*2+1},0.4)%
        \color{black}\linethickness{0.8pt}
        \qbezier(1,0.4)({\numexpr(##1)+1},{\numexpr(##1)+1})({\numexpr(##1)*2+1},0.4)%
        \color{teal}
        \put(1,0.4){\circle*{0.7}}
    \end{picture}
    \rule{0pc}{\dimexpr\unitlength*(##1)/2+\unitlength}%
    \fi\fi\fi\fi\fi\fi\fi\fi%
}%
{\drawclan{#2}}%
}%

\def\myclan#1{{\hspace{-1pc}
    \begin{array}{c}\\[-1pc]
    \clan{#1}\vphantom{\dfrac12}\\\end{array}
    \hspace{-1pc}}}
\def\fmyclan#1{{\hspace{-.5pc}\begin{array}{|c|}\hline\\[-1pc]
    \hspace{-.5pc}\rule{0pc}{1.8pc}
    \clan{#1}\hspace{-.5pc}
    \\\hline\end{array}\hspace{-.5pc}}}

\DeclareFontEncoding{LS1}{}{}
\DeclareFontSubstitution{LS1}{stix}{m}{n}
\DeclareSymbolFont{stixletters}{LS1}{stix}{m}{it}

\DeclareMathSymbol{\stixcevhead}{\mathord}{stixletters}{"91}
\DeclareMathSymbol{\stixcevext}{\mathord}{stixletters}{"99}

\makeatletter

\newcommand{\stixcevfill}{%
  $\m@th
    \stixcevhead
    \hskip-.23em plus .23em
    \cleaders\hbox{$\stixcevext$}\hfill
  $%
}

\newcommand{\stixcevaux}[2]{%
  \vbox{%
    \m@th
    \ialign{##\crcr
      \stixcevfill\crcr
      \noalign{\kern-1ex\nointerlineskip}
      $\hfil#1#2\hfil$\crcr
    }%
  }%
}

\newcommand{\cev}[1]{%
  \mathpalette\stixcevaux{#1}%
}

\makeatother

\DeclareMathAccent{\vec}{\mathord}{stixletters}{"92}

\newcommand{\supp}{\text{supp}}
\newcommand{\SVan}{\text{SchubertVanishing}}
\newcommand{\eSVan}{\text{equi-SchubertVanishing}}
\newcommand{\tSVan}{\text{tri-SchubertVanishing}}

\begin{document}

\def\supp{\operatorname{\mathcal{S}upp}}
\def\emph#1{{\sf\color{green!30!black} \textbf{#1}}}

\title{ Non-vanishing of 
Single, Double, and Triple\\
 Schubert Structure Constants}

\author{Yiming Chen}
\address[Yiming Chen, Neil J.Y. Fan, Ming Yao]{Department of Mathematics, 
Sichuan University, Chengdu, Sichuan 610065, P.R. China}
\email{ym\_chen@stu.scu.edu.cn, fan@scu.edu.cn, yaom@stu.scu.edu.cn}

\author{Neil J.Y. Fan}
\author{Rui Xiong}
\address[Rui Xiong]{Department of Mathematics and Statistics, University of Ottawa, 150 Louis-Pasteur, Ottawa, ON, K1N 6N5, Canada}
\email{rxion043@uottawa.ca}
\author{Ming Yao}

\maketitle

\vspace{-.5cm}
\begin{abstract}
The Schubert vanishing problem  asks whether the single Schubert coefficients $c_{u,v}^w$ are zero.  In this paper, we consider the non-vanishing problems of double  Schubert coefficients $c_{u,v}^w(t)$ and  triple Schubert coefficients $c_{u,v}^w(t;y)$. We show that the non-vanishing of  $c_{u,v}^w(t;y)$ is completely determined by the non-vanishing of single Schubert coefficients. As a byproduct, we obtain the saturation property of the triple Littlewood--Richardson coefficients $c_{\lambda,\mu}^\nu(t;y)$. Moreover, we pose a conjecture asserting that the non-vanishing of $c_{u,v}^w(t)$ is also   determined by the non-vanishing of single or  triple Schubert coefficients. We prove a one-side inclusion of the  conjecture. For the reverse inclusion, we show that the  conjecture holds for the following three cases: the Pieri  case, the separated descents case, and the inverse Grassmannian case. 
\end{abstract}

\setcounter{tocdepth}{1}
\tableofcontents

\section{Introduction}

Schubert polynomials $\mathfrak{S}_w(x) \in \mathbb{Z}[x_1, x_2, \dots]$, indexed by permutations $w \in S_\infty$, were introduced by Lascoux and Sch\"utzenberger \cite{LS82} to represent the cohomology classes of Schubert varieties in the complete flag variety.
The  structure constants arising from their multiplication, known as the Schubert coefficients $c_{u,v}^w \in \mathbb{Z}_{\ge0}$, are defined via the expansion:
$$
\mathfrak{S}_u(x) \cdot \mathfrak{S}_v(x) = \sum_{w \in S_\infty} c_{u,v}^w\cdot \mathfrak{S}_w(x).
$$
A central open problem in algebraic combinatorics and enumerative geometry is to provide a positive rule (a combinatorial interpretation) for these  coefficients.

Double Schubert polynomials $\mathfrak{S}_w(x; t)$ are  representatives of Schubert classes in the  equivariant cohomology ring of the flag variety.  
The structure constants of equivariant Schubert calculus are the double Schubert coefficients $c_{u,v}^w(t)$, which are polynomials in $t = (t_1, t_2, \dots)$ with integer  coefficients:
$$
\mathfrak{S}_u(x; t) \cdot \mathfrak{S}_v(x; t) = \sum_{w \in S_\infty} c_{u,v}^w(t) \cdot\mathfrak{S}_w(x; t).
$$
The double Schubert coefficients $c_{u,v}^w(t)$ are known to be Graham-positive \cite{Graham01}, meaning they are polynomials in $(t_{i+1} - t_i)$ with non-negative coefficients.

While an explicit combinatorial formula remains elusive, deciding the qualitative non-vanishing of these coefficients is a  significant   task with direct  theoretical and algorithmic implications. As Knutson \cite[\S 1.4]{Knu22} underscored:  {\it for applications (including real-world engineering applications) it is more
important to know that some structure constant is positive, than it is to know its actual value.} From the perspective of Geometric Complexity Theory \cite{Mul09, MNS12,Panova},  the non-vanishing of structure constants lies at the core of algorithmic algebraic geometry. In particular, Mulmuley \cite[\S 3.7]{Mul09} identified Schubert coefficients as: {\it  one of the fundamental structural constants whose vanishing needs to be understood.}

To systematically study the non-vanishing behavior of these structure constants, 
we define the \emph{single support} and \emph{double support}:
$$\begin{aligned}
\supp^1(u,v) & =\big\{w\in S_\infty \mid c_{u,v}^w\neq 0\big\},\\
\supp^2(u,v) & =\big\{w\in S_\infty \mid c_{u,v}^w(t)\neq 0\big\}.\\
\end{aligned}$$
We conjecture  that the double support is actually determined by the single support.

\begin{conj}\label{conj:123conj}
For any $u,v\in S_\infty$, 
\begin{align}\label{eq:1->2}
\supp^2(u,v)=\bigcup_{\scriptstyle\text{reduced }u=u_1\cdot u_2
\atop \scriptstyle\text{reduced }v=v_1\cdot v_2}
\supp^1(u,v_2)\cap \supp^1(u_2,v).
\end{align}
\end{conj}

Moreover, we expect that Conjecture \ref{conj:123conj} also holds in all other types. 
The main purpose of this paper is to prove the inclusion ``$\subseteq$'' in \eqref{eq:1->2} holds in general, and prove the reverse inclusion ``$\supseteq$'' in three special cases. 

\subsection{The ``$\subseteq$'' part via triple Schubert calculus}
The proof of the inclusion ``$\subseteq$'' in \eqref{eq:1->2} is based on the triple Schubert calculus,  i.e., the product of two Schubert polynomials with two different secondary variables
\begin{equation}\label{eq:triplecoeff}
\mathfrak{S}_u(x;y)\cdot \mathfrak{S}_{v}(x;t)
=\sum_{w\in S_\infty} c_{u,v}^w(t;y)
\cdot\mathfrak{S}_{w}(x;t). 
\end{equation}
The triple Schubert calculus on Grassmannians was studied by Molev and Sagan \cite{MS},   Knutson and Tao \cite{KT03} considered the situation on flag varieties; see also 
\cite{Wheeler,Samuel,FGXpuzzle,Gaox,S-orbit}.
We also consider the \emph{triple support}:
$$
\supp^3(u,v) =\big\{w\in S_\infty \mid c_{u,v}^w(t;y)\neq 0\big\}.
$$
Note that different from the single and the double cases, the roles of $u$ and $v$ in \eqref{eq:triplecoeff} are not symmetric, i.e., $\supp^3(u,v)\neq \supp^3(v,u)$ in general. 
The following Conjecture \ref{conjb} captures this asymmetry. 

\begin{conj}\label{conjb} 
For any $u,v\in S_\infty$, 
\begin{align}\label{eq:3->2}
\supp^2(u,v)=\supp^3(u,v)\cap \supp^3(v,u).
\end{align}
\end{conj}

We remark that Conjecture \ref{conjb} is inspired by our previous work \cite[\S 7.2]{S-orbit}. 
Precisely, when $u,v$ are  inverse Grassmannian permutations, both the double and triple Schubert coefficients have explicit combinatorial formulas, in this case it is direct to check \eqref{eq:3->2}; see Section \ref{section5}.

\begin{theorem}\label{thm:123conj'}
 Conjecture \ref{conj:123conj} is equivalent to Conjecture \ref{conjb}.
In particular, the inclusion ``$\subseteq$'' part of  \eqref{eq:1->2} is true. 
\end{theorem}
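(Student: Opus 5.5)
The plan is to describe the triple support entirely in terms of single supports. Concretely, I aim to prove that for all $u,v\in S_\infty$,
\begin{equation}\label{eq:plan-supp3}
\supp^3(u,v)=\bigcup_{\text{reduced } u=u_1\cdot u_2}\supp^1(u_2,v).
\end{equation}
Granting \eqref{eq:plan-supp3}, apply it also to $\supp^3(v,u)$, which gives the union of $\supp^1(v_2,u)=\supp^1(u,v_2)$ over reduced $v=v_1\cdot v_2$. Intersecting the two unions and distributing ($\bigcup_i A_i\cap\bigcup_j B_j=\bigcup_{i,j}A_i\cap B_j$) turns $\supp^3(u,v)\cap\supp^3(v,u)$ exactly into the right-hand side of \eqref{eq:1->2}. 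So Conjectures \ref{conj:123conj} and \ref{conjb} assert the same identity, which proves the equivalence.

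For the inclusion ``$\subseteq$'' I use specialization. Setting $y=t$ in \eqref{eq:triplecoeff} gives $c_{u,v}^w(t;t)=c_{u,v}^w(t)$. Hence $c_{u,v}^w(t)\neq0$ forces $c_{u,v}^w(t;y)\neq0$, i.e. $\supp^2(u,v)\subseteq\supp^3(u,v)$. Since double coefficients are symmetric in $u,v$, also $\supp^2(u,v)\subseteq\supp^3(v,u)$. This gives ``$\subseteq$'' in \eqref{eq:3->2}, and through the equivalence ``$\subseteq$'' in \eqref{eq:1->2}.

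To prove \eqref{eq:plan-supp3}, I would first expand $\mathfrak S_u(x;y)$ by the Cauchy formula in the form $\mathfrak S_u(x;y)=\sum_{\text{reduced } u=u_1u_2}\mathfrak S_{u_1^{-1}}(-y)\,\mathfrak S_{u_2}(x)$, adjusting conventions if needed. Then I would expand each $\mathfrak S_{u_2}(x)\mathfrak S_v(x;t)$ in the basis $\mathfrak S_w(x;t)$. The specialization $t=0$ turns the double basis into the single basis, so
\[
c_{u,v}^w(0;y)=\sum_{\text{reduced } u=u_1u_2}\mathfrak S_{u_1^{-1}}(-y)\,c_{u_2,v}^w .
\]
The polynomials $\mathfrak S_{u_1^{-1}}(-y)$ are linearly independent, and each $u_2$ determines $u_1$. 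Therefore $c_{u,v}^w(0;y)\neq0$ if and only if $w\in\supp^1(u_2,v)$ for some factorization. This immediately gives ``$\supseteq$'' in \eqref{eq:plan-supp3}: if $c(0;y)\ne0$ then $c(t;y)\ne0$.

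The main obstacle is the converse: showing that $c_{u,v}^w(t;y)\neq0$ implies $c_{u,v}^w(0;y)\neq0$, i.e. that setting $t=0$ loses no terms. My first attempt is a positivity statement for triple coefficients. Specifically, $c_{u,v}^w(t;y)$ should be a nonnegative integer combination of products of factors of the form $y_a-t_b$ and $t_{b+1}-t_b$, with the convention arranged so the relevant factors become $y_a$ at $t=0$. I expect to obtain this from the Knutson--Tao/Graham-type geometric positivity for the torus acting with two different parameters, or from a positive formula for the mixed Cauchy expansion. If the factors $t_{b+1}-t_b$ do occur, $t=0$ may kill terms. In that case I would instead track the top homogeneous component in $y$. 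Writing $c_{u,v}^w(t;y)=\sum_{u_1u_2}\mathfrak S_{u_1^{-1}}(-y)\,c_{u_2,v}^w(t)$, choose a nonzero term with $\ell(u_1)$ maximal. I would then argue by induction on length that its double coefficient $c_{u_2,v}^w(t)$ can be traded, via a further factorization of $u_2$ and triangularity of the Cauchy expansion, for a single coefficient $c_{u_2',v}^w\neq0$.
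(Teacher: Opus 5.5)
Your reduction is the same as the paper's. Everything rests on $\supp^3(u,v)=\bigcup_{u=u_1\cdot u_2}\supp^1(u_2,v)$ (Theorem~\ref{thm:i->3} with $i=1$). Once that holds, the equivalence is the distributive computation you give, and $\supp^2(u,v)\subseteq\supp^3(u,v)\cap\supp^3(v,u)$ follows from $y=t$ and symmetry. Your proof of the inclusion $\bigcup\supp^1(u_2,v)\subseteq\supp^3(u,v)$ is correct: the Cauchy formula at $t=0$ plus linear independence of the $\mathfrak{S}_{u_1^{-1}}(-y)$ works, and is a valid alternative to the paper's left Demazure operators. The gap is the other inclusion $\supp^3(u,v)\subseteq\bigcup\supp^1(u_2,v)$, i.e.\ that $c_{u,v}^w(t;y)\neq0$ forces $c_{u,v}^w(0;y)\neq0$. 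You leave this as an expectation, yet both the equivalence and the ``in particular'' inclusion depend on it. Without it you only have $\supp^2(u,v)\subseteq\supp^3(u,v)\cap\supp^3(v,u)$, which says nothing about the right-hand side of \eqref{eq:1->2}.

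The missing input is the triple Graham positivity $c_{u,v}^w(t;y)\in\mathbb{N}[t_i-y_j]_{i,j}$ (conjectured by Samuel, proved by Gao and Xiong), which the paper quotes. It contains no factors $t_{i+1}-t_i$, so every nonzero triple coefficient becomes a positive integer at $t=0$, $y=-1$. In particular $c_{u,v}^w(t;y)\neq0$ implies $c_{u,v}^w(0;y)\neq0$, and your own $t=0$ computation then finishes the proof. The paper does the same at $t=0,y=-1$, using $\mathfrak{S}_{u_1^{-1}}(1)>0$ and $c_{u_2,v}^w\ge0$.

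The positivity you guess, with $t_{b+1}-t_b$ factors, would not suffice, as you note. Your fallback also fails as stated: the identity $c_{u,v}^w(t;y)=\sum\mathfrak{S}_{u_1^{-1}}(-y)\,c_{u_2,v}^w(t)$ is false. For $u=s_1$, $v=w=e$ it gives $-y_1$ instead of $t_1-y_1$. The correct mixed Cauchy formula $\mathfrak{S}_u(x;y)=\sum_{u=u_1\cdot u_2}\mathfrak{S}_{u_1}(t;y)\,\mathfrak{S}_{u_2}(x;t)$ gives $c_{u,v}^w(t;y)=\sum\mathfrak{S}_{u_1}(t;y)\,c_{u_2,v}^w(t)$. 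Induction on $\ell(u)$ then disposes of every term with $u_2\neq u$. You are still left with the case $c_{u,v}^w(t;y)=c_{u,v}^w(t)\neq0$ with $\ell(w)<\ell(u)+\ell(v)$, a nonzero positive-degree coefficient independent of $y$. Nothing in the triangularity of the Cauchy expansion excludes this. Positivity in the $t_i-y_j$ excludes it at once, because a nonzero element of $\mathbb{N}[t_i-y_j]$ of positive degree does not vanish at $t=0$ and hence depends on $y$.
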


The following diagram summarizes the relations among the single, double and triple supports, where the two dashed arrows are conjectural. 
$$
\begin{matrix}
\xymatrix@=1pc{
{\begin{matrix}
\text{single}\\
\text{support}
\end{matrix}}
\ar@{-->}@<+0.75ex>[rr]^{\text{\eqref{eq:1->2}}}
\ar@{->}@<+0.75ex>[dr]^{\text{\eqref{eq:i->3}}}
&\rule[-8pc]{0pc}{0pc}&
{\begin{matrix}
\text{double}\\
\text{support}
\end{matrix}}
\ar@{->}@<+0.75ex>[ll]^{\text{\eqref{eq:i->1}}}
\ar@{->}@<+0.75ex>[dl]^{\text{\eqref{eq:i->3}}}
\\
& 
{\begin{matrix}
\text{triple}\\
\text{support}
\end{matrix}}
\ar@{->}@<+0.75ex>[ul]^{\text{\eqref{eq:i->1}}}
\ar@{-->}@<+0.75ex>[ur]^{\text{\eqref{eq:3->2}}}
}
\end{matrix}
$$

\begin{example}\label{eg:runningeg}
Let $u=312=s_2s_1, v=231=s_1s_2$. Then the following diagram illustrates the relations among $\supp^2(u,v),\supp^3(u,v),\supp^3(v,u)$. 
$$
\def\centerarc[#1](#2)(#3:#4:#5);%
{\draw[#1]([shift=(#3:#5)]#2) arc (#3:#4:#5);}
\begin{tikzpicture}[scale=1.5]
    \begin{scope}
        \clip (0,1) ellipse (1.75 and 1.25);
        \fill[lightgray!50!white, ultra thick,rotate = 45] (0,1.4) ellipse (2 and 1);
    \end{scope}
\node at (0,0)  {$312\color{gray}45$};
\node at (1,1)  {$4123\color{gray}5$};
\node at (-1,1) {$321\color{gray}45$};
\node at (-2,0) {$231\color{gray}45$};
\node at (0,2)  {$4213\color{gray}5$};
\draw[ultra thick,gray,dashed] (-3,2.5)--+(5,0); 
\draw[ultra thick,gray,dashed] (-3,1.5)--+(5,0); 
\draw[ultra thick,gray,dashed] (-3,0.5)--+(5,0); 
\draw[ultra thick,gray,dashed] (-3,-0.5)--+(5,0); 
\draw[blue, ultra thick,rotate = 45] (0,1.4) ellipse (2 and 1);
\draw[red, ultra thick] (0,1) ellipse (1.75 and 1.25);
\fill[lightgray!50!white, ultra thick] (-.5,3) ellipse (1 and 0.3);
\node at (-.5,3) {$\supp^2(u,v)$};
\node[blue] at (-4,3) {$\supp^3(u,v)$};
\node[blue] at (-4,2.5) {$\parallel$};
\node[blue] at (-4,2) {$\supp^1(u,v)$};
\node[blue] at (-4,1.5) {$\cup$};
\node[blue] at (-4,1) {$\supp^1(s_1,v)$};
\node[blue] at (-4,0.5) {$\cup$};
\node[blue] at (-4,0) {$\supp^1(e,v)$};
\node[red] at (3,3) {$\supp^3(v,u)$};
\node[red] at (3,2.5) {$\parallel$};
\node[red] at (3,2) {$\supp^1(v,u)$};
\node[red] at (3,1.5) {$\cup$};
\node[red] at (3,1) {$\supp^1(s_2,u)$};
\node[red] at (3,0.5) {$\cup$};
\node[red] at (3,0) {$\supp^1(e,u)$};
\end{tikzpicture}$$
\end{example}

\subsection{The ``$\supseteq$'' part via 
combinatorial formulas}

We show the reverse inclusion ``$\supseteq$''  of \eqref{eq:3->2} holds in  three cases, whose combinatorial interpretations of the Schubert coefficients are known. 

\begin{theorem}\label{thm:reversepart}
 Conjecture \ref{conj:123conj} ($\Leftrightarrow$ Conjecture \ref{conjb}) is true in the following cases.
\begin{enumerate}
   
\item The \emph{Pieri case}, i.e., one of $u,v$ is of the form $s_{a}s_{a+1}\cdots s_b$ or 
$s_{b}s_{b-1}\cdots s_a$ for  
$a\leq b$;

\item 
\label{case:sep}
The \emph{separated descents case}, i.e.,  $u,v$ are of separated descents, including the case when $u,v$ are both Grassmannian permutations.

\item The \emph{inverse Grassmannian case}, i.e., $u,v$ are both inverse Grassmannian permutations. 
  
\end{enumerate}
\end{theorem}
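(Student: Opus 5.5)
By Theorem \ref{thm:123conj'}, the inclusion ``$\subseteq$'' in \eqref{eq:1->2} and \eqref{eq:3->2} already holds in general. So in each of the three cases it suffices to prove the reverse inclusion
$$\supp^3(u,v)\cap\supp^3(v,u)\subseteq \supp^2(u,v).$$
We will use the description of the triple support suggested by Example \ref{eg:runningeg}:
$$\supp^3(u,v)=\bigcup_{\text{reduced } u=u_1u_2}\supp^1(u_2,v).$$
This should follow from specializing $y$ and using the expansion $\mathfrak S_u(x;y)=\sum \mathfrak S_{u_2}(x)\,\mathfrak S_{u_1^{-1}}(-y)$ (Cauchy formula). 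With it, the task becomes concrete. Suppose $w\in\supp^1(u_2,v)\cap\supp^1(u,v_2)$ for some reduced factorizations $u=u_1u_2$ and $v=v_1v_2$. We must show $c_{u,v}^w(t)\neq0$.

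The strategy in each case is to use a known positive combinatorial model for $c_{u,v}^w(t)$ in which every object carries a Graham-positive weight. Nonvanishing then reduces to exhibiting one object. The plan is to build such an object by merging two single-coefficient objects, one witnessing $w\in\supp^1(u_2,v)$ and one witnessing $w\in\supp^1(u,v_2)$.

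\begin{enumerate}
\item \textbf{Pieri case.} Assume $v=s_a\cdots s_b$; the other orientations follow by the symmetries $w\mapsto w_0ww_0$ and inversion. Here we use the equivariant Pieri rule of Robinson/Sottile type. The double coefficient is a sum over chains in $k$-Bruhat order in which some steps are ``equivariant'' (a step is skipped and replaced by a factor $t_i-t_j$). Since the reduced factors $v_2$ of $v$ are again Pieri-type, both single conditions say $w$ is reached from $u$ or $u_2$ by a Pieri chain. We then interleave these: equivariant steps account for $u_1$ and shortened steps account for $v_1$, giving an admissible equivariant chain from $u$ to $w$.
\item \textbf{Separated descents case.} We use the equivariant (bumpless) puzzle rule for separated descents (Knutson--Zinn-Justin / Fan--Guo--Xiong). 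Single puzzles are exactly the equivariant puzzles with no equivariant pieces. An equivariant puzzle's boundary change corresponds to factoring off $u_1$ and $v_1$ through equivariant rhombi. Given two single puzzles with labels $(u_2,v,w)$ and $(u,v_2,w)$, we will glue or overlay them along the region where they agree and insert equivariant pieces where they differ.
\item \textbf{Inverse Grassmannian case.} Following Section \ref{section5}, we will use the explicit formulas from \cite{S-orbit}, where \eqref{eq:3->2} can be checked directly by comparing supports: both $c^w_{u,v}(t)$ and $c^w_{u,v}(t;y)$ are sums over the same combinatorial objects, with the same index conditions and Graham-positive weights.
\end{enumerate}

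The main obstacle is the separated descents case. There the gluing of two non-equivariant puzzles into a single equivariant puzzle is not obviously possible. I would first attempt an induction on $\ell(u_1)+\ell(v_1)$, peeling off one simple reflection at a time. This uses the recursion $c_{u,v}^w(t)$ versus $c_{us_i,v}^w$ coming from divided differences: the equivariant coefficient contains $(t_{i+1}-t_i)$ times a positive combination involving the shorter coefficients. Positivity then prevents cancellation, so a single nonzero term suffices.
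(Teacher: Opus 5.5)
Your reduction is right: by Theorem \ref{thm:123conj'} only $\supp^3(u,v)\cap\supp^3(v,u)\subseteq\supp^2(u,v)$ needs proof. The description of the triple support you want is Theorem \ref{thm:i->3}, whose ``$\subseteq$'' half needs the Graham positivity of $c_{u,v}^w(t;y)$, not only the Cauchy formula, so that specializing cannot kill a nonzero coefficient.

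The genuine gap is the separated descents case. Overlaying a single puzzle for $(u_2,v,w)$ with one for $(u,v_2,w)$ has no mechanism behind it, since the boundary data differ, as you concede. Your fallback induction relies on a recursion that does not exist. The available recursions for $c_{u,v}^w(t)$ (equivariant Chevalley plus associativity, or Leibniz rules for divided differences) involve differences of localizations and terms of both signs. So positivity does not rule out cancellation, and no identity writes $c_{u,v}^w(t)$ as $(t_{i+1}-t_i)$ times a positive combination of shorter coefficients.

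The missing idea is not to merge two single witnesses at all. The paper extracts from $w\in\supp^3(v,u)$ only the explicit condition $u(i)\le w(i)$ for $i\le k$ (Lemma \ref{lemma4.1}). Its proof: $\mathfrak{S}_v(x;y)$ is symmetric in $x_1,\dots,x_k$, hence a polynomial in $e_r(x_1,\dots,x_k)$ and $x_1+\cdots+x_m$ ($m>k$), and by the Pieri rule these can only increase the first $k$ entries. The paper then takes one triple pipe puzzle for $(u,v,w)$, which exists because $w\in\supp^3(u,v)$. At $y=t$ its weight dies exactly when it has an empty tile on the diagonal. The inequality $u(j)\le w(j)$ forces the $i$-th pipe through row $i$, which is what makes the droop operation clearing the diagonal possible (Theorem \ref{th:puzzle}).

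The other two cases are only sketched, and the sketches miss the actual content. In the Pieri case, take $v=\delta[r,k]$. Your two memberships give a decreasing $k$-path $u\rightsquigarrow w$ of length $r'=\ell(v_2)$ (since $v_2=\delta[r',k]$), together with $v\le w$ (Lemma \ref{fangda1}). Nothing needs to be interleaved. What must be shown is that $\mathfrak{S}_{\delta[r-r',k-r']}(t_{\Delta_k(u,w)};t)\neq0$. This is a localization at a Grassmannian $\pi$ with $\{\pi(1),\dots,\pi(k-r')\}=\Delta_k(u,w)$, so it is nonzero iff $\delta[r-r',k-r']\le\pi$. The paper derives this from $\delta[r,k]\le w$ by a Gale-order argument: delete the $r'$ largest entries on one side and the $r'$ path labels on the other. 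Your plan never identifies this inequality.

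Also, inversion is not a symmetry of Schubert structure constants: $c_{231,213}^{321}=1$ but $c_{312,213}^{321}=0$. Conjugation by $w_0$ alone already swaps the two Pieri shapes.

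In the inverse Grassmannian case the index conditions are not ``the same''. All three supports share the condition $\ell(w*\gamma)=\ell(w)+\ell(\gamma)$, but beyond that:
\begin{enumerate}
\item $w\in\supp^3(u,v)$ needs $v_{w*\gamma}=e$;
\item $w\in\supp^3(v,u)$ needs $u_{w*\gamma}=e$, obtained by applying Theorem \ref{th:triplecoeffi} to $\cev{\gamma}$;
\item $w\in\supp^2(u,v)$ needs $w*\gamma$ permutational.
\end{enumerate}
The proof closes only via Lemma \ref{lem:permutational}, which you must invoke.
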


Our proof utilizes the explicit combinatorial formulas in each case. 
The following table summaries the references of the cases above. 
$$
\begin{array}{c|c}\hline
\vphantom{\dfrac12}
\text{Cases} & 
\makebox[9pc]{single} 
\makebox[9pc]{double} 
\makebox[9pc]{triple}\\\hline
\vphantom{\dfrac12}
\text{\tiny Pieri formula}& 
\makebox[9pc]{\tiny Sottile \cite{Sottile}}
\makebox[9pc]{\tiny Robinson \cite{Robinson}; see also \cite{LSY}
} 
\makebox[9pc]{\tiny Samuel \cite{Samuel}}
\\\hline
\vphantom{\dfrac12}
\text{\tiny separated descents}& 
\makebox[18pc]{\tiny ------ Knutson--Zinn Justin
\cite{Pzz3}; see also \cite{Huang21} ------}
\makebox[9pc]{\tiny Fan--Guo--Xiong \cite{FGXpuzzle}, Samuel \cite{Samuel}}
\\\hline
\vphantom{\dfrac12}
\text{\tiny inverse Grassmannain}& 
\makebox[9pc][l]{\tiny Pechenick--Weigandt \cite{Pandt}; see also \cite{Wyser}}
\makebox[18pc]{\tiny ------ Chen--Fan--Xiong--Yao \cite{S-orbit} ------}\\\hline
\end{array}$$

When restricting to Grassmannian permutations, the Schubert coefficients are known as the Littlewood--Richardson (LR) coefficients, which are indexed by partitions. 
The proof of \eqref{case:sep} in Theorem \ref{thm:reversepart} can be applied to obtain 
a description of $\supp^3(\lambda,\mu)$, i.e. the non-vanishing of $c_{\lambda,\mu}^{\nu}(t;y)$.

Knutson and Tao \cite{KT} showed that the single LR-coefficient $c_{\lambda,\mu}^{\nu}$ has saturation property, that is, $c_{\lambda,\mu}^{\nu}\neq 0$ if and only if $c_{N\lambda,N\mu}^{N\nu}\neq 0$ for any positive integer $N$. In particular, the saturation property of $c_{\lambda,\mu}^{\nu}$ is equivalent to Horn's inequalities and the existence of Hermitian matrices (Theorem \ref{thm:saturation3}). 
 We show that the triple LR-coefficients $c_{\lambda,\mu}^{\nu}(t;y)$ also have {saturation property} (Theorem \ref{triplesaturation}).
This can be viewed as a generalization of the saturation property of double LR-coefficients  $c_{\lambda,\mu}^{\nu}(t)$ due to Anderson,  Richmond and Yong \cite{ARY}; see also \cite{RYY}.

\subsection{Applications to complexity theory}

The \emph{Schubert vanishing problem} is defined as 
 $$\SVan := \left\{ c_{u,v}^w \stackrel{?}{=} 0 \right\},$$
which is a major problem in both algebraic combinatorics and computational complexity  theory.  A recent series of works by  Pak and Robichaux   addressed the long-standing complexity status of this  problem \cite{PR24a,PR24b, PR25a, PR25b}. 
Specifically, they \cite{PR25a} showed that the Schubert vanishing problem can be decided in co-randomized polynomial time, that is,
$\SVan(Y) \in \mathsf{coRP}$
holds unconditionally for all classical Lie types $Y \in \{A, B, C, D\}$.

We also consider the \emph{equivariant Schubert vanishing problem}:  
$$\eSVan := \left\{ c_{u,v}^w(t) \stackrel{?}{=} 0 \right\},$$
and the \emph{triple Schubert vanishing problem}:
$$
\tSVan := \left\{ c_{u,v}^w(t;y) \stackrel{?}{=} 0 \right\}.
$$
For the case of Grassmannians,
Adve, Robichaux, and Yong \cite{ARY19} proved that the vanishing of double LR-coefficients can be decided in strongly polynomial time.

Since classical Schubert vanishing is unconditionally in $\mathsf{coRP}$ \cite{PR25a},  each required positive non-vanishing test admits a polynomial-length certificate, namely an accepting random string. Assuming Conjecture \ref{conj:123conj}, the polynomial-size witness tuple $(u_1, u_2, v_1, v_2)$  together with accepting random strings for the required classical non-vanishing tests, form an $\mathsf{coNP}$ certificate for the   vanishing of double and triple  Schubert coefficients. Hence, the equivariant and triple Schubert vanishing problems lie in $\mathsf{coNP}$.  
\begin{coro}
If Conjecture \ref{conj:123conj} is true, then the  equivariant Schubert vanishing problem lies in $\mathsf{coNP}$. Moreover,  the triple Schubert vanishing problem lies in $\mathsf{coNP}$ unconditionally.
\end{coro}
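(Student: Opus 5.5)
The plan is to reduce both claims to the classical result of Pak and Robichaux \cite{PR25a} that $\SVan\in\mathsf{coRP}$. The key observation is that $\mathsf{coRP}$-membership of vanishing means non-vanishing is in $\mathsf{RP}\subseteq\mathsf{NP}$: whenever $c_{u,v}^w\neq 0$, some random string of polynomial length makes the $\mathsf{coRP}$ algorithm reject ``$c_{u,v}^w=0$'', and no such string exists when $c_{u,v}^w=0$. So such an accepting random string is a polynomially checkable certificate of single non-vanishing. To place a vanishing problem in $\mathsf{coNP}$, I will exhibit a polynomial-size certificate for \emph{non}-vanishing built from finitely many classical certificates.

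For the unconditional triple statement, I will use the description of $\supp^3$ in terms of single supports that underlies Theorem \ref{thm:123conj'}, namely \eqref{eq:i->3}. As Example \ref{eg:runningeg} illustrates, this reads
$$\supp^3(u,v)=\bigcup_{\text{reduced }u=u_1\cdot u_2}\supp^1(u_2,v).$$
A certificate that $c_{u,v}^w(t;y)\neq 0$ is then a pair consisting of a permutation $u_2$ together with a random string certifying $c_{u_2,v}^w\neq0$.

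The verifier proceeds in two steps.
\begin{enumerate}
\item It checks that $u_2$ is a right factor of a reduced factorization of $u$, i.e.\ that $\ell(uu_2^{-1})+\ell(u_2)=\ell(u)$. This is an inversion count, hence polynomial time. Since $u_2\le u$ in weak order, $u_2$ lives in the same $S_n$ as $u$, so its size is polynomial.
\item It runs the Pak--Robichaux algorithm on $(u_2,v,w)$ with the supplied random string.
\end{enumerate}
Soundness and completeness then follow directly from the displayed equality.

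For the conditional double statement, I assume Conjecture \ref{conj:123conj}. A certificate for $c_{u,v}^w(t)\neq0$ is a tuple $(u_1,u_2,v_1,v_2)$ together with two random strings certifying $c_{u,v_2}^w\neq0$ and $c_{u_2,v}^w\neq0$. The verifier checks $u=u_1u_2$ and $v=v_1v_2$ as permutation products, checks the length additivity $\ell(u_1)+\ell(u_2)=\ell(u)$ and similarly for $v$, and runs the two classical tests. Alternatively, Conjecture \ref{conjb} gives the same conclusion via two triple certificates. Two inputs are common to both arguments and must be verified. First, $u$, $v$ and $w$ may be taken in a common $S_n$ with $n$ polynomial in the input size, since $c_{u,v}^w=0$ unless $\ell(w)\le\ell(u)+\ell(v)$ and $w$ lies in a bounded $S_n$. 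Second, the Pak--Robichaux algorithm applies uniformly to these inputs.

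The main obstacle is not complexity-theoretic bookkeeping but the precise form of \eqref{eq:i->3}. I need the exact set of single supports whose union is $\supp^3(u,v)$, and I rely on the equality proved with Theorem \ref{thm:123conj'}. Once that combinatorial description is in hand, the certificate construction above is routine.
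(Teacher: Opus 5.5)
Your proposal is correct and follows essentially the paper's argument. An accepting random string for the Pak--Robichaux $\mathsf{coRP}$ algorithm certifies single non-vanishing, and this string together with the witness $u_2$ (via \eqref{eq:i->3}, unconditionally) or the witness $(u_1,u_2,v_1,v_2)$ (via Conjecture \ref{conj:123conj}) gives a polynomially checkable certificate of non-vanishing. Your version separates the unconditional triple case from the conditional double case more clearly than the paper's one-paragraph sketch does; note that the triple case rests on Theorem \ref{thm:i->3} itself rather than on Theorem \ref{thm:123conj'}.
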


The organization of this paper is as follows. In Section \ref{section2}, we recall some basic notation and properties, and show that Theorem \ref{thm:123conj'}  holds. In Sections \ref{section3}, \ref{section4}, and \ref{section5}, we prove Theorem \ref{thm:reversepart}. In Section \ref{section6}, we derive  the saturation property and the non-vanishing of the triple LR-coefficients via Horn's inequalities.

\subsection*{Acknowledgment}
We are grateful to Colleen Robichaux and Alexander Yong for helpful discussions. This work is supported by the National Key Research and Development Program of China (No. 2025YFA1017702) and the National Natural Science Foundation of China (No. 12471314).

\section{Basic Properties}\label{section2}
Let $w_0=n\cdots 21\in S_n$ be the longest permutation. The double Schubert polynomials $$\mathfrak{S}_w(x;t)\in 
\mathbb{Q}[x_i,t_j]_{i,j}=\mathbb{Q}[x_1,x_2,\ldots,t_1,t_2,\ldots]$$
for $w\in S_n$ can be characterized by 
$$\mathfrak{S}_{w_0}(x;t)=\prod_{i+j\leq n}
(x_i-t_j),\qquad
\mathfrak{S}_w(x;t)
=\partial_k\mathfrak{S}_{ws_k}(x;t),
\text{ if }
\ell(ws_k)=\ell(w)+1,$$
where $\partial_k$ is the divided difference operator on polynomials  
$$\partial_kf =\frac{f-f|_{x_k\leftrightarrow x_{k+1}}}{x_k-x_{k+1}}.$$
If we set $t=0$, $\mathfrak{S}_w(x;0)$ is the single Schubert polynomial $\mathfrak{S}_w(x)$.

We also need the \emph{left Demazure operators $\varpi_k$}, see, for example,  \cite[Appendix A]{FGXpuzzle}. For any polynomial $f(x;y)$, let
$$\varpi_k f(x;y)=-\frac{f-f|_{y_k\leftrightarrow y_{k+1}}}{y_k-y_{k+1}}. $$
Then, by setting $\beta=0$ in \cite[Proposition A.1]{FGXpuzzle}, we have 
\begin{equation}\label{leftoperator}
\varpi_k
\mathfrak{S}_{w}(x;y)
=\begin{cases}
\mathfrak{S}_{s_kw}(x;y), & \ell(s_kw)=\ell(w)-1,\\
0,& \ell(s_kw)=\ell(w)+1.
\end{cases}
\end{equation}

\begin{prop}\label{prop:property}
Let $u,v\in S_\infty$. We have 
\begin{align}
\label{eq:1<2<3} 
\supp^1(u,v)& \subseteq 
\supp^2(u,v)\subseteq 
\supp^3(u,v), \\
\label{eq:i->1}
\supp^1(u,v) & = \{w\in \supp^i(u,v):
\ell(w)=\ell(u)+\ell(v)\},\qquad 
\text{for }i=1,2,3. 
\end{align}
\end{prop}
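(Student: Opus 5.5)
The plan is to work with degrees and specializations. Every coefficient is homogeneous once we grade $x_i,t_j,y_j$ all in degree $1$. Indeed $\mathfrak{S}_w(x;t)$ is homogeneous of degree $\ell(w)$. This holds for $w_0$ by the product formula and is preserved by $\partial_k$, which lowers degree by one. Since the $\mathfrak{S}_w(x;t)$ form a $\mathbb{Q}[t]$-basis (they are triangular in $x$ with respect to the single Schubert basis), comparing homogeneous components gives the following. The coefficient $c_{u,v}^w(t)$ is homogeneous of degree $\ell(u)+\ell(v)-\ell(w)$ in $t$, and likewise $c_{u,v}^w(t;y)$ is homogeneous of degree $\ell(u)+\ell(v)-\ell(w)$ in $(t,y)$.

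First consequence: all supports lie in $\{\ell(w)\le \ell(u)+\ell(v)\}$. When $\ell(w)=\ell(u)+\ell(v)$, the double and triple coefficients are constants.

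\textbf{Step 1: specializations.}
\begin{itemize}
\item Setting $y=t$ in \eqref{eq:triplecoeff} gives $c_{u,v}^w(t;t)=c_{u,v}^w(t)$, by uniqueness of the expansion in the basis $\{\mathfrak{S}_w(x;t)\}$.
\item Setting $t=0$ in the double expansion gives $c_{u,v}^w(0)=c_{u,v}^w$, since $\mathfrak{S}_w(x;0)=\mathfrak{S}_w(x)$.
\end{itemize}
Hence $c^w_{u,v}\ne0$ implies $c_{u,v}^w(t)\ne 0$, and that in turn implies $c_{u,v}^w(t;y)\ne0$. This proves \eqref{eq:1<2<3}.

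\textbf{Step 2: the equality \eqref{eq:i->1}.} Fix $w$ with $\ell(w)=\ell(u)+\ell(v)$. Then $c^w_{u,v}(t;y)$ has degree $0$, so it is a constant. Specializing $y=t$ and then $t=0$ does not change a constant, so
$$c^w_{u,v}(t;y)=c^w_{u,v}(t)=c^w_{u,v}.$$
Conversely, every $w\in\supp^1(u,v)$ satisfies $\ell(w)=\ell(u)+\ell(v)$, because single Schubert polynomials are homogeneous of degree $\ell$. Combining this with Step 1 yields \eqref{eq:i->1} for $i=1,2,3$.

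\textbf{Main obstacle.} The delicate point is justifying that the triple expansion \eqref{eq:triplecoeff} is well defined and homogeneous, with coefficients in $\mathbb{Q}[t,y]$. This needs $\{\mathfrak{S}_w(x;t)\}$ to be a basis of $\mathbb{Q}[t,y][x]$ over $\mathbb{Q}[t,y]$, which follows by extension of scalars from the $\mathbb{Q}[t]$-basis statement. One must also check that the specialization $y=t$ commutes with taking coefficients. I would handle both by the triangularity $\mathfrak{S}_w(x;t)=\mathfrak{S}_w(x)+(\text{terms involving } t)$ together with the stability of Schubert polynomials in $S_\infty$.
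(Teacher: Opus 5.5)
Your proof is correct and follows the same route as the paper, which derives \eqref{eq:1<2<3} from the specializations $c_{u,v}^w=c_{u,v}^w(0)$ and $c_{u,v}^w(t)=c_{u,v}^w(t;t)$. It gets \eqref{eq:i->1} from the fact that $c_{u,v}^w=c_{u,v}^w(t)=c_{u,v}^w(t;y)$ when $\ell(w)=\ell(u)+\ell(v)$ and $c_{u,v}^w=0$ otherwise; the paper states this without proof, and your homogeneity and basis argument supplies the justification.
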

\begin{proof}
The first property follows from  $c_{u,v}^w=c_{u,v}^w(0)$ and 
$c_{u,v}^w(t) = c_{u,v}^w(t;t)$. 
The second property follows from 
$c_{u,v}^w=c_{u,v}^w(t) = c_{u,v}^w(t;y)$ when $\ell(w)=\ell(u)+\ell(v)$ and $c_{u,v}^w=0$ otherwise. 
\end{proof}

It turns out that the triple support is completely determined by the single or double supports.

\begin{theorem}\label{thm:i->3}
For any $u,v\in S_\infty$, 
\begin{equation}
\label{eq:i->3}
\supp^3(u,v) = \bigcup_{\text{ reduced}\,u=u_1\cdot u_2 }\supp^i(u_2,v),\qquad 
\text{for }i=1,2,3. 
\end{equation}
\end{theorem}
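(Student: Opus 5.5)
The plan is to expand the triple coefficient explicitly in terms of double coefficients. I will use the classical expansion of a double Schubert polynomial in one set of secondary variables in terms of double Schubert polynomials in another set:
\begin{equation*}
\mathfrak{S}_u(x;y)=\sum_{\text{reduced } u=u_1\cdot u_2}\mathfrak{S}_{u_1^{-1}}(t;y)\,\mathfrak{S}_{u_2}(x;t),
\end{equation*}
where $u_1\cdot u_2$ denotes a length-additive factorization (sign conventions to be matched with the paper's normalization). This follows from the Cauchy-type identity $\mathfrak{S}_u(x;y)=\sum_{u_1u_2=u}\mathfrak{S}_{u_1^{-1}}(-y)\mathfrak{S}_{u_2}(x)$ applied twice. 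Alternatively, it can be derived by induction on $\ell(u)$ using the left Demazure operators $\varpi_k$ in \eqref{leftoperator}: apply $\varpi_k$ to both sides and compare.

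Multiplying by $\mathfrak{S}_v(x;t)$ and expanding in the basis $\mathfrak{S}_w(x;t)$ gives
\begin{equation*}
c_{u,v}^w(t;y)=\sum_{\text{reduced } u=u_1\cdot u_2}\mathfrak{S}_{u_1^{-1}}(t;y)\,c_{u_2,v}^w(t).
\end{equation*}
This immediately yields ``$\subseteq$'' for $i=2$: if $c_{u,v}^w(t;y)\ne0$, then some $c_{u_2,v}^w(t)\ne0$.

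For ``$\supseteq$'' with $i=2$, I need to rule out cancellation. The cleanest route is positivity. Substitute $y$-variables so that each $\mathfrak{S}_{u_1^{-1}}(t;y)$ is a nonzero polynomial with nonnegative coefficients in the differences $y_j-t_i$ (or in suitable Graham-type variables), while $c_{u_2,v}^w(t)$ is Graham-positive. A cleaner alternative is to separate the terms by degree in $y$. The summand indexed by $u_1$ is homogeneous of $y$-degree at most $\ell(u_1)$. Its top $y$-part is $\mathfrak{S}_{u_1^{-1}}(0;y)\,c^w_{u_2,v}(t)$, up to sign, and the polynomials $\mathfrak{S}_{u_1^{-1}}(y)$ are linearly independent over $\mathbb{Q}[t]$. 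So I would argue as follows. Assume $w\in\supp^2(u_2,v)$, and among all such factorizations choose one with $\ell(u_1)$ maximal. Then the coefficient of $\mathfrak{S}_{u_1^{-1}}(-y)$ in the top $y$-degree component is a sum of $c_{u_2',v}^w(t)$ over factorizations with $u_1'=u_1$. Since $u_2'=u_1^{-1}u$ is determined by $u_1$, this coefficient is exactly $c_{u_2,v}^w(t)\ne0$. Hence $c_{u,v}^w(t;y)\ne0$.

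It remains to handle $i=1$ and $i=3$. For $i=3$: $\supp^3(u_2,v)$ equals the union of $\supp^2(u_2',v)$ over factorizations of $u_2$, and these refine to factorizations of $u$. So the $i=3$ right side coincides with the $i=2$ right side. For $i=1$: by \eqref{eq:1<2<3} the $i=1$ union is contained in the $i=2$ union. For the converse, given $w\in\supp^2(u_2,v)$, I expand the top homogeneous degree. Setting $t=y=0$ in the displayed formula shows that the total degree-$(\ell(w)-\ell(v))$ piece involves $\ell(u_2)=\ell(w)-\ell(v)$. I would instead take $u_1$ maximal with $w\in\supp^2(u_2,v)$. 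Then, by Graham positivity and the characterization \eqref{eq:i->1}, I expect to show that $\ell(w)=\ell(u_2)+\ell(v)$, i.e.\ $w\in\supp^1(u_2,v)$. This step is the main obstacle. To prove it, I would show that if $\ell(w)>\ell(u_2)+\ell(v)$, one can peel a further factor off $u_2$ while keeping non-vanishing. For this I would use the same expansion with the roles reversed, i.e.\ $t\mapsto$ a specialization in which the double coefficient factors through a shorter $u_2'$.
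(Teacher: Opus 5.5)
Your treatment of $i=2$ and $i=3$ is sound, and it takes a different route from the paper. There is one index slip: the change-of-variables identity is $\mathfrak{S}_u(x;y)=\sum_{u=u_1\cdot u_2}\mathfrak{S}_{u_1}(t;y)\,\mathfrak{S}_{u_2}(x;t)$, not with $u_1^{-1}$ (check $u=s_1s_2$). At $t=0$ it reduces to the paper's Cauchy formula, since $\mathfrak{S}_{u_1}(0;y)=\mathfrak{S}_{u_1^{-1}}(-y)$. The resulting formula $c^w_{u,v}(t;y)=\sum_{u_1}\mathfrak{S}_{u_1}(t;y)\,c^w_{u_2,v}(t)$, together with your top-$y$-degree triangularity, gives $\supp^3(u,v)=\bigcup\supp^2(u_2,v)$ with no positivity input at all. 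The paper proceeds differently: it proves ``$\supseteq$'' with the left Demazure operators $\varpi_k$, showing $\supp^3(s_ku,v)\subseteq\supp^3(u,v)$ when $s_ku<u$. It obtains ``$\subseteq$'' for $i=2,3$ only as a consequence of the $i=1$ case.

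The $i=1$ case is the substantive part of the theorem, since it yields the ``$\subseteq$'' half of Conjecture~\ref{conj:123conj}, and your proposal does not prove it.

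Setting $t=0$ in your own formula gives $c^w_{u,v}(0;y)=\sum_{u_1}\mathfrak{S}_{u_1^{-1}}(-y)\,c^w_{u_2,v}$. By linear independence, $\bigcup_{u_2}\supp^1(u_2,v)=\{w: c^w_{u,v}(0;y)\neq 0\}$. So the $i=1$ statement is exactly the claim that the specialization $t\mapsto 0$ never kills a nonzero $c^w_{u,v}(t;y)$. Equivalently, the minimal $\ell(u_2)$ with $c^w_{u_2,v}(t)\neq0$ equals $\ell(w)-\ell(v)$. Note that the case to exclude is $\ell(u_2)+\ell(v)>\ell(w)$; the inequality you wrote, $\ell(w)>\ell(u_2)+\ell(v)$, is impossible by degree.

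Nothing in the formal expansion rules out cancellation under $t\mapsto 0$. Your ``peeling'' step via an unspecified specialization has no mechanism. Graham positivity of the double coefficients in the $t_{i+1}-t_i$ says nothing about the other factorizations $u_2'$, so it cannot fill the gap either.

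The missing ingredient is triple Graham positivity, $c^w_{u,v}(t;y)\in\mathbb{N}[t_i-y_j]$ (Samuel's conjecture, proved by Gao--Xiong). Under $t=0$, each monomial in the $t_i-y_j$ becomes a monomial in the $-y_j$ with positive coefficient. Hence $c^w_{u,v}(0;y)\neq 0$ whenever $c^w_{u,v}(t;y)\neq0$, and your identification above finishes the argument. This is what the paper does. It specializes further to $t=0$, $y=-1$ and uses $\mathfrak{S}_{u_1^{-1}}(1)>0$ and $c^w_{u_2,v}\ge 0$ in the Cauchy formula.
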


\begin{proof}
By \eqref{eq:1<2<3}, it suffices to show ``$\subseteq$'' when $i=1$ and  ``$\supseteq$'' when $i=3$. 

\noindent{\bf The direction ``$\supseteq$''.}
Assume $i=3$. 
Applying $\varpi_k$ to both sides of \eqref{eq:triplecoeff}, by \eqref{leftoperator}, we obtain
$$\varpi_k 
c_{u,v}^w(t;y)
=\begin{cases}
c_{s_ku,v}^w(t;y), & \ell(s_ku)=\ell(u)-1,\\
0, & \ell(s_ku)=\ell(u)+1. 
\end{cases}$$
In particular, 
$\supp^3(s_ku,v)\subseteq \supp^3(u,v)$ if $\ell(s_ku)=\ell(u)-1$. 
If there is a reduced decomposition $u=u_1\cdot u_2$, then, by induction, we have 
\begin{equation}
\supp^3(u_2,v)
\subseteq 
\supp^3(u,v). 
\end{equation}
This proves the inclusion ``$\supseteq$'' of \eqref{eq:i->3} for $i=3$. 

\noindent{\bf The direction ``$\subseteq$''.}
It was conjectured by Samuel \cite{Samuel} and proved by Gao and the third author \cite{Gaox} that the following Graham positivity holds: 
$$c_{u,v}^w(t;y)\in \mathbb{N}[t_i-y_j]_{i,j\geq 0}.$$
Thus we have 
$$c_{u,v}^w(t;y)\neq 0\iff c_{u,v}^w(0;-1)\neq 0.$$
Setting $t=0, y=-1$ in \eqref{eq:triplecoeff}, we get 
$$\mathfrak{S}_{u}(x,-1)\cdot 
\mathfrak{S}_{v}(x)
=\sum_{w\in \supp^3(u,v)} c_{u,v}^w(0,-1)\cdot 
\mathfrak{S}_w(x)
\in \sum_{w\in \supp^3(u,v)} \mathbb{Z}_{>0}\cdot 
\mathfrak{S}_w(x).
$$
By the Cauchy formula  of Schubert polynomials \cite{Mac}, we have
$$\mathfrak{S}_u(x,-1) = \sum_{\text{reduced }u=u_1\cdot u_2}
\mathfrak{S}_{u_1^{-1}}(1)\cdot 
\mathfrak{S}_{u_2}(x).$$
Since the single structure constant $c_{u,v}^w\in \mathbb{Z}_{\ge0}$, 
\begin{align*}
\mathfrak{S}_{u}(x,-1)\cdot 
\mathfrak{S}_{v}(x)
&=\sum_{\text{reduced }u=u_1\cdot u_2}
\mathfrak{S}_{u_1^{-1}}(1)\cdot 
\mathfrak{S}_{u_2}(x)\cdot \mathfrak{S}_{v}(x)\\
&= 
\sum_{\scriptstyle\text{reduced }u=u_1\cdot u_2 \atop \scriptstyle w\in \supp^1(u_2,v)}
\mathfrak{S}_{u_1^{-1}}(1)\cdot c_{u_2,v}^w\cdot
\mathfrak{S}_{w}(x).
\end{align*}
Thus if $w\in\supp^3(u,v)$ then $w\in\supp^1(u_2,v)$ for some reduced decomposition $u=u_1\cdot u_2$, this proves the inclusion ``$\subseteq$'' of \eqref{eq:i->3} for $i=1$. 
\end{proof}

\begin{proof}[Proof of Theorem \ref{thm:123conj'}]
Applying Theorem \ref{thm:i->3} for $i=1$, we have
\[
\supp^2(u,v) =\left(\bigcup_{\text{ reduced}\,u=u_1\cdot u_2 }\supp^1(u_2,v)\right)\cap \left(\bigcup_{\text{ reduced}\,v=v_1\cdot v_2 }\supp^1(u,v_2)\right),
\]
which leads to the equivalence of \eqref{eq:1->2} and \eqref{eq:3->2}. 
The inclusion ``$\subseteq$''  part of  Conjecture \ref{conj:123conj} follows from \eqref{eq:1<2<3}. 
\end{proof}

\section{The Pieri Case}\label{section3}

In this section, we consider the case that $u$ is either 
\begin{equation}
\begin{aligned}
\delta[r,k]&=s_{k-r+1}\cdots s_{k-1}s_k& \quad \text{for some $0\leq r\leq k$, or}\\
\sigma[r,k]&=s_{k+r-1}\cdots s_{k+1}s_{k}& \quad \text{for some }0\leq r,k.
\end{aligned}
\end{equation}

\begin{theorem} \label{th:piericase}
Let $v\in S_\infty.$ Then we have
\begin{align}
\label{eq:epieri}
\supp^2(\delta[r,k],v)&=\supp^3(\delta[r,k],v)\cap \supp^3(v,\delta[r,k])
\\
\label{eq:hpieri}
\supp^2(\sigma[r,k],v)&=\supp^3(\sigma[r,k],v)\cap \supp^3(v,\sigma[r,k]). 
\end{align}
\end{theorem}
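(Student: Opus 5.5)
By Theorem \ref{thm:123conj'} and Proposition \ref{prop:property}, the inclusion ``$\subseteq$'' in \eqref{eq:epieri} and \eqref{eq:hpieri} already holds. So I only need the reverse inclusion: if $w\in\supp^3(u,v)\cap\supp^3(v,u)$ with $u=\delta[r,k]$ (resp.\ $\sigma[r,k]$), then $c_{u,v}^w(t)\neq 0$.

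The first step is to rewrite both triple supports using Theorem \ref{thm:i->3} with $i=1$. The reduced factorizations $u=u_1\cdot u_2$ of $\delta[r,k]=s_{k-r+1}\cdots s_k$ are unique as words: $u_2=\delta[j,k]$ for $0\le j\le r$. Hence
\[
\supp^3(\delta[r,k],v)=\bigcup_{j\le r}\supp^1(\delta[j,k],v),
\]
and similarly $u_2=\sigma[j,k]$ in the other case. Sottile's Pieri rule describes each $\supp^1(\delta[j,k],v)$: $w\in\supp^1$ iff $v^{-1}w$ is a product of $j$ transpositions $(a_i,b_i)$ with $a_i\le k<b_i$, each step increasing length by one, with the ``distinct $b$'s'' condition (resp.\ ``distinct $a$'s'' for $\sigma$). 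So $w\in\supp^3(u,v)$ means: $w\ge v$ in $k$-Bruhat order, the chain from $v$ to $w$ has Pieri type of length $j\le r$, and $w\ge v$ with $\ell(w)-\ell(v)\le r$. The other condition, $w\in\supp^3(v,u)$, is by Theorem \ref{thm:i->3} $\bigcup_{v=v_1\cdot v_2}\supp^1(v_2,u)$. Since $c_{v_2,u}^w=c_{u,v_2}^w$, this means there is a suffix $v_2$ of $v$ with $w$ obtained from $v_2$ by a full Pieri chain of length $r$. In particular it forces $\ell(w)\ge r$ and $w\ge u$.

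The second step is to use Robinson's equivariant Pieri formula \cite{Robinson} (or the formulation in \cite{LSY}). It expresses $c_{u,v}^w(t)$ as a positive sum over certain chains in $k$-Bruhat order from $v$ to $w$ of Pieri type. Along such a chain, some steps are ``genuine'' Pieri steps (length $+1$). The remaining steps contribute Graham-positive factors $t_{b}-t_{a}$ and correspond to the equivariant terms, where the chain has fewer than $r$ steps. Graham positivity ensures there is no cancellation, so $c_{u,v}^w(t)\neq0$ iff at least one admissible chain exists. The plan is therefore to build such a chain from the two pieces of data: a Pieri chain $v\to w$ of length $j\le r$ (from $\supp^3(u,v)$), and a factorization $v=v_1v_2$ with $w\in\supp^1(v_2,u)$ (from $\supp^3(v,u)$).

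The main obstacle is this combinatorial matching. Robinson's rule attaches the equivariant weights to ``missing'' steps, and these must be realizable within the window $[k-r+1,k+1]$ cut out by $u$. The condition $w\in\supp^3(v,u)$ must supply exactly that realizability. My first attempt is induction on $r-j$ via a recursion. Applying $\varpi$-type operators, or the equivariant divided difference $\partial_i$ in the $t$-variables, relates $c_{\delta[r,k],v}^w(t)$ to $c_{\delta[r-1,k],v}^w(t)$ and $c_{\delta[r-1,k-1],\cdot}^{\cdot}(t)$. I will check that membership in both triple supports is inherited by one term of the recursion, and that this term is Graham-positive. The base case $j=r$ is the nonequivariant case, handled by Proposition \ref{prop:property}. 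The $\sigma[r,k]$ case follows by the same argument, or via the symmetry $w\mapsto w_0ww_0$ conjugated with inversion, which interchanges $\delta$ and $\sigma$ and preserves all three supports up to relabeling $t\leftrightarrow -t$ reversed.
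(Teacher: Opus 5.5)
There is a genuine gap. Everything nontrivial about the reverse inclusion is in the step you call ``the main obstacle,'' and the proposal does not carry it out.

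Your criterion ``$c^w_{u,v}(t)\neq 0$ iff an admissible chain exists'' does not settle the question. If admissible just means a Pieri-type $k$-chain from $v$ to $w$ of length at most $r$, the claim is false. Every $w\in\supp^3(\delta[r,k],v)$ is reached by such a chain, yet the double coefficient can vanish. In the running example $u=231=\delta[2,2]$, $v=312$, $w=312$, the path has length $0$ and $c^w_{u,v}(t;y)=(t_1-y_1)(t_3-y_1)$, which is $0$ at $y=t$. If admissibility already builds in nonvanishing of the weights, the claim merely restates what must be proved.

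The proposed induction on $r-j$ does not fill this in either:
the operators $\varpi_i$ act only on $y$, so they relate $c^w_{\delta[r,k],v}(t;y)$ to $c^w_{\delta[r-1,k],v}(t;y)$ but say nothing about what survives the specialization $y=t$;
divided differences in $t$ act on $\mathfrak{S}_u(x;t)$, $\mathfrak{S}_v(x;t)$ and every $\mathfrak{S}_w(x;t)$ at once, so they give a Leibniz-type identity rather than a two-term recursion in $r$;
you never say which term would inherit membership in $\supp^3(v,\cdot)$.

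The missing idea is to work with the explicit coefficient rather than with chains. By the triple Pieri rule, $c^w_{\delta[r,k],v}(t;y)=\mathfrak{S}_{\delta[r-r',k-r']}(t_{\Delta_k(v,w)};y)$, where $r'$ is the length of the decreasing $k$-path $v\rightsquigarrow w$. At $y=t$ this is the localization of $\mathfrak{S}_{\delta[r-r',k-r']}(x;t)$ at the Grassmannian $\pi$ listing $\Delta_k(v,w)$ increasingly in its first $k-r'$ positions. Hence it is nonzero iff $\delta[r-r',k-r']\le\pi$.

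The only input needed from $w\in\supp^3(v,u)$ is $\delta[r,k]\le w$. You did extract this, and it also follows from the localization argument showing $\supp^3(v,u)\subseteq\{w: u\le w\}$, but you never used it. In Gale order it says $\{1,\dots,k-r,k-r+2,\dots,k+1\}\le\{w(1),\dots,w(k)\}$. Delete the $r'$ largest elements on the left, and on the right the $r'$ entries of $w$ in the positions moved by the path. The inequality is preserved and leaves $\{1,\dots,k-r,k-r+2,\dots,k-r'+1\}\le\Delta_k(v,w)$, which is exactly $\delta[r-r',k-r']\le\pi$. No induction is needed.

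Two smaller points. You swapped Sottile's conditions: for $\delta[r,k]$ the $a_i$ are distinct, and for $\sigma[r,k]$ the $b_i$ are. Also, composing $w_0$-conjugation with inversion sends $\delta[r,k]$ to $\delta[r,N-k+r-1]$, not to a $\sigma$. Conjugation by $w_0$ alone is what exchanges the two types, and the $\sigma$ case can simply be done directly with $\Sigma_k$ and increasing paths.
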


We only give a proof of \eqref{eq:epieri}, the proof for \eqref{eq:hpieri} is similar.

Let us first recall the ordinary, double and triple Pieri rules. 
Recall that the $k$-Bruhat order is defined as follows. For two permutations $x,y\in S_\infty$, we say $y$ covers $x$ in the $k$-Bruhat order, denoted as $ x\lessdot_k y,$ if
\[
y=xs_{ab},
\quad
a\leq k<b,
\quad
\ell(y)=\ell(x)+1,
\]
where $s_{ab}$ is the reflection interchanging $a$ and $b$.
Label this edge by the value $x(a)$, and denote
\[
x\xrightarrow{\tau}y
\quad\Longleftrightarrow\quad
x\lessdot_k y
\text{ and }
\tau=x(a).
\]
The $k$-Bruhat order $\le_k$ is the transitive closure of the covering relation $\lessdot_k$.
A  $k$-path $\gamma$ of length $\ell(\gamma)=s$ from $v$ to $w$, denoted as $\gamma:v\rightsquigarrow w$, is a chain
\[
\gamma:
v=x_0
\xrightarrow{\tau_1}x_1
\xrightarrow{\tau_2}\cdots
\xrightarrow{\tau_s}x_s=w.
\]
We say that $\gamma$ is  \emph{decreasing} (resp., \emph{increasing}), if
$
\tau_1>\tau_2>\cdots>\tau_s
$ (resp., $\tau_1<\tau_2<\cdots<\tau_s$).

\begin{theorem}[ordinary Pieri rule \cite{Sottile}]
Let $v\in S_\infty$. We have
\begin{align*}
\mathfrak{S}_{\delta[r,k]}(x)\cdot\mathfrak{S}_v(x)
& =\sum_{\substack{\text{decreasing $k$-path}\\
\gamma:\,v\rightsquigarrow w,\,
\ell(\gamma)=r}}
\mathfrak{S}_{w}(x),&
\mathfrak{S}_{\sigma[r,k]}(x)\cdot \mathfrak{S}_v(x)
& =\sum_{\substack{\text{increasing $k$-path}\\
 \gamma:\,v\rightsquigarrow w,\, 
\ell(\gamma)=r}}
\mathfrak{S}_w(x).   
\end{align*}%
\end{theorem}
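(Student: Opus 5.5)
Since $\mathfrak{S}_{\delta[r,k]}(x)=e_r(x_1,\dots,x_k)$ and $\mathfrak{S}_{\sigma[r,k]}(x)=h_r(x_1,\dots,x_k)$ (both permutations are Grassmannian with descent at $k$, resp.\ dominant-shaped, and this can be checked directly by applying $\partial$'s to $\mathfrak{S}_{w_0}$), the task is to show how multiplication by these symmetric functions acts on the Schubert basis. I would treat only the $e_r$ case; the $h_r$ case is obtained by the same argument with ``decreasing'' replaced by ``increasing'' (or via the involution $w\mapsto w_0ww_0$ on $S_n$, which exchanges the two shapes and reverses labels).

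\textbf{Step 1 (Monk's rule as operators).} I start from Monk's rule in its single-variable form. Multiplication by $x_i$ on $\mathfrak{S}_v$ equals $\sum_{j>i}\mathfrak{S}_{vt_{ij}}-\sum_{j<i}\mathfrak{S}_{vt_{ji}}$, where each sum runs over Bruhat covers $\ell(vt)=\ell(v)+1$. Summing over $i\le k$, the terms with both indices $\le k$ cancel. What remains is exactly $\mathfrak{S}_{s_k}\mathfrak{S}_v=\sum_{v\lessdot_k w}\mathfrak{S}_w$, which is the case $r=1$. I would encode this as operators $X_i=\sum_{j>i}T_{ij}-\sum_{j<i}T_{ji}$ on $\bigoplus_w\mathbb{Z}\mathfrak{S}_w$. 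Here $T_{ab}$ sends $v\mapsto vt_{ab}$ when this is a cover, and $v\mapsto0$ otherwise. Since multiplication by polynomials commutes, the $X_i$ pairwise commute. Hence $e_r(x_1,\dots,x_k)$ acts as $e_r(X_1,\dots,X_k)$.

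\textbf{Step 2 (expand and cancel).} Next I expand $e_r(X_1,\dots,X_k)=\sum_{i_1<\dots<i_r\le k}X_{i_1}\cdots X_{i_r}$, ordering the factors suitably (they commute, so any order is allowed). This gives a signed sum over chains $v\lessdot vt_{a_1b_1}\lessdot\cdots$ of length $r$. I then want a sign-reversing involution that kills every chain containing a transposition with both indices $\le k$. It should also kill every $k$-chain whose labels $v(a)$ fail to be strictly decreasing. The natural tool is the relations among the $T_{ab}$ (Fomin--Kirillov-type quadratic and Yang--Baxter relations). For instance, $T_{ab}T_{bc}$ can be rewritten via $T_{bc}T_{ac}+T_{ac}T_{ab}$, and products of commuting transpositions can be swapped. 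I would use these relations to straighten each surviving word into a canonical order, namely the one in which the labels $v(a)$ decrease.

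\textbf{Step 3 (identify survivors).} Finally I show that each decreasing $k$-path $v\rightsquigarrow w$ of length $r$ survives with coefficient exactly $+1$. The surviving terms are those where every step is a $k$-Bruhat cover with $a\le k<b$, and the $r$ distinct indices $i_1,\dots,i_r$ are forced to be the positions $a$ moved. The decreasing-label condition makes the correspondence between chains and choices of $(i_1,\dots,i_r)$ bijective.

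I expect Step 2 to be the main obstacle: producing a clean involution, or an ordering argument via the quadratic relations, that cancels all non-$k$-Bruhat steps and all wrongly ordered label sequences without leaving residue. If a direct involution proves messy, my fallback is induction on $r$ using $e_r\cdot e_1=(r+1)e_{r+1}+(\text{terms with }e_{r-1}\text{ and squares})$, i.e.\ the Newton/Pieri identity $e_1e_r=e_{r+1}\cdot(\ldots)$. Concretely, I would compare both sides by applying Monk's rule once more and checking combinatorially that extending a decreasing $k$-path by one $k$-cover reproduces the required multiplicities, as in Sottile's original geometric/combinatorial argument.
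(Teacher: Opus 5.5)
The paper gives no proof of this statement; it imports it from Sottile \cite{Sottile}, whose argument is geometric. Your Step~1 is sound. Monk's rule turns multiplication by $x_i$ into $X_i=\sum_{j>i}T_{ij}-\sum_{j<i}T_{ji}$, the $X_i$ commute, and $e_r(x_1,\ldots,x_k)$ acts as $e_r(X_1,\ldots,X_k)$.

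\textbf{The main gap is Step~2.} It is the entire content of the Pieri rule, you flag it yourself, and the proposal contains no argument for it. No involution is defined, and the mechanism you describe (cancel every chain that uses a transposition inside $[k]$) does not work as stated. Take $k=r=2$ and $v=\mathrm{id}$, and apply $X_1$ and then $X_2$ (left factor acts first). There is exactly one nonzero term: the chain $1234\to2134\to2314$ via $t_{12}$, then $t_{23}$. So the coefficient $1$ of $\mathfrak{S}_{2314}$ is carried entirely by a ``bad'' chain. The decreasing path $1234\xrightarrow{2}1324\xrightarrow{1}2314$ appears only after rewriting with the three-term relation $T_{12}T_{23}=T_{23}T_{13}+T_{13}T_{12}$. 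Bad terms are therefore transformed rather than killed. Which ones survive depends on how the factor order relates to $v$. For $k\ge3$ the cancellations must also run across different index sets $\{i_1<\cdots<i_r\}$: the partner of $T_{13}$ coming from $X_1$ is $-T_{13}$ coming from $X_3$. Hence Step~3's claim that each decreasing path survives with coefficient exactly $+1$ is unsupported.

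To finish along your route you need a genuine identity for the Bruhat operators. One option is Postnikov's theorem in the Fomin--Kirillov algebra:
\[
e_r(X_1,\ldots,X_k)=\sum T_{a_1b_1}\cdots T_{a_rb_r},
\]
summed over distinct $a_i\le k<b_i$ with $b_1\le\cdots\le b_r$, proved by induction from the quadratic relations. You would then check two things. First, nonvanishing forces factors with equal $b$ into label-decreasing order. Second, factors with distinct $b$'s are disjoint and commute, so they can be rearranged, bijectively, into a label-decreasing chain. Completed this way, your argument would be an algebraic alternative to Sottile's geometry.

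\textbf{Other problems.} Your fallback cannot be run as stated. Since $e_1e_r=(r+1)e_{r+1}+m_{(2,1^{r-1})}$, knowing how $e_1$ and $e_r$ act does not determine how $e_{r+1}$ acts unless you already control $m_{(2,1^{r-1})}$, which is no easier. A workable recursion is $e_r(x_1,\ldots,x_k)=e_r(x_1,\ldots,x_{k-1})+x_k\,e_{r-1}(x_1,\ldots,x_{k-1})$, but it again needs a cancellation argument for the negative part of $X_k$.

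Next, the $\sigma[r,k]$ case does not follow from $w\mapsto w_0ww_0$ ``reversing labels''. This conjugation sends a $k$-cover $x\lessdot_k xt_{ab}$ with label $x(a)$ to an $(n-k)$-cover with label $n+1-x(b)$, which uses the value at the other end. For example, in $S_4$ one has $w_0\delta[2,2]w_0=\sigma[2,2]$. The decreasing path $2143\xrightarrow{2}3142\xrightarrow{1}3412$ is fixed by the conjugation and stays decreasing. The increasing path accounting for $c^{3412}_{\sigma[2,2],2143}=1$ is a different one, $2143\xrightarrow{1}2413\xrightarrow{2}3412$. So an extra equinumerosity argument is needed there as well.

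A minor correction: $\sigma[r,k]$ is Grassmannian of shape $(r)$, not dominant in general; for instance $\sigma[2,2]=1423$.
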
%
As shown in \cite[Lemma 6]{Sottile} (see also \cite[Lemma 5.2]{FGX}), we have 
\begin{align*}
c^w_{\delta[r,k],v}(0)\neq0
\quad&\Longleftrightarrow\quad
\text{there exists a decreasing $k$-path 
$\gamma:v\rightsquigarrow w$
of length $r$}.\\
&\Longleftrightarrow
\quad w=vs_{a_1b_1}s_{a_2b_2}\cdots s_{a_rb_r}\,\text{with}\,a_1,\ldots,a_r\,\text{distinct, and} \\
&\qquad\quad \,\ell(vs_{a_1b_1}s_{a_2b_2}\cdots s_{a_ib_i})=\ell(v)+i,\,\text{for} \  i=1,\ldots,r.\\
c^w_{\sigma[r,k],v}(0)\neq0
\quad&\Longleftrightarrow\quad
\text{there exists an increasing $k$-path 
$\gamma:v\rightsquigarrow w$
of length $r$}.\\
&\Longleftrightarrow
\quad w=vs_{a_1b_1}s_{a_2b_2}\cdots s_{a_rb_r}\,\text{with}\,b_1,\ldots,b_r\,\text{distinct, and} \\
&\qquad\quad \,\ell(vs_{a_1b_1}s_{a_2b_2}\cdots s_{a_ib_i})=\ell(v)+i,\,\text{for} \ i=1,\ldots,r.
\end{align*}

For two permutations $v$ and $w$, define
\begin{align}
\Delta_k(v,w)&=\{v(i): i\leq k,\,v(i)= w(i)\},\\[5pt]
\Sigma_k(v,w)&=\{v(1),\ldots,v(k)\}\cup \{v(i):  i>k,\, v(i)\neq w(i)\}. 
\end{align}
If $\gamma:v\rightsquigarrow w$
is a decreasing $k$-path with length $r$, then its $r$ edges use $r$ distinct values
of $v(1),\ldots,v(k)$.  Hence
$|\Delta_k(v,w)|=k-r.$
Similarly, if $\gamma:v\rightsquigarrow w$
is an increasing $k$-path with length $r$, then 
$|\Sigma_k(v,w)|=k+r.$
For a set of positive integers $A=\{a_1<a_2<\cdots<a_m\}$ and  any given polynomial $f(x_1,\ldots, x_m)$, we denote by $f(x_A)$   the polynomial    $f(x_{a_1},\ldots, x_{a_m})$.

\begin{theorem}[double Pieri rule \cite{Robinson,LSY}]\label{doublepieri}
Let $v\in S_\infty$. Then
\begin{align*}
\mathfrak{S}_{\delta[r,k]}(x;t)\cdot\mathfrak{S}_v(x;t)
& =
\sum_{\substack{\text{decreasing $k$-path } \gamma:\, v\rightsquigarrow w\\
\delta[r,k]\leq w,\
\ell(\gamma)=r'\le r}}\mathfrak{S}_{\delta[r-r',k-r']}(t_{\Delta_k(v,w)};t)
\cdot\mathfrak{S}_{w}(x;t).\\[5pt]
\mathfrak{S}_{\sigma[r,k]}(x;t)\cdot\mathfrak{S}_v(x;t)
& =
\sum_{\substack{\text{increasing $k$-path }\gamma:\,v\rightsquigarrow w\\
\sigma[r,k]\leq w, \
\ell(\gamma)=r'\le r}}\mathfrak{S}_{\sigma[r-r',k+r']}(t_{\Sigma_k(v,w)};t)
\cdot\mathfrak{S}_{w}(x;t).
\end{align*}
\end{theorem}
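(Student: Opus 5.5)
The plan is to prove the first identity; the second follows by the same argument with increasing paths and $\Sigma_k$ in place of decreasing paths and $\Delta_k$. The main tool is \emph{localization}: for $w\in S_n$ write $t_w=(t_{w(1)},t_{w(2)},\dots)$. Recall two standard facts.
\begin{itemize}
\item Vanishing: $\mathfrak{S}_u(t_w;t)=0$ unless $u\le w$.
\item Diagonal value: $\mathfrak{S}_w(t_w;t)=\prod_{(i,j)\text{ inversion of }w}(t_{w(j)}-t_{w(i)})\neq 0$, up to the usual sign convention.
\end{itemize}
Substituting $x=t_w$ into the product expansion gives a triangular system
$$\mathfrak{S}_{\delta[r,k]}(t_w;t)\,\mathfrak{S}_v(t_w;t)=\sum_{v\le z\le w}c^{z}_{\delta[r,k],v}(t)\,\mathfrak{S}_z(t_w;t).$$
This system determines the coefficients $c^z_{\delta[r,k],v}(t)$ uniquely. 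So it suffices to check that the proposed coefficients satisfy it for every $w$.

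First I would make $\mathfrak{S}_{\delta[r,k]}(x;t)$ explicit. Since $\delta[r,k]$ is Grassmannian with descent at $k$ and shape a single column of length $r$, its double Schubert polynomial is the factorial elementary symmetric polynomial
$$\mathfrak{S}_{\delta[r,k]}(x;t)=\sum_{1\le i_1<\cdots<i_r\le k}\ \prod_{j=1}^r\bigl(x_{i_j}-t_{i_j-j+1}\bigr).$$
I would verify this by applying $\partial$'s downward from $\mathfrak{S}_{w_0}$, or by checking the vanishing and diagonal-value characterization above. With this formula, $\mathfrak{S}_{\delta[r-r',k-r']}(t_{\Delta_k(v,w)};t)$ is the same polynomial evaluated on the $k-r'$ values that the path leaves fixed. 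This matches the heuristic that along a decreasing $k$-path the positions $a_1,\dots,a_{r'}$ are used up, and the remaining $k-r'$ variables in $x_1,\dots,x_k$ still contribute the missing $r-r'$ factors.

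Next I would run an induction on $r$, with the case $r=1$ being the equivariant Monk/Chevalley formula. For that base case, $\mathfrak{S}_{s_k}(x;t)\mathfrak{S}_v=\sum_{v\lessdot_k w}\mathfrak{S}_w+\bigl(\sum_{i\le k}t_{v(i)}-\sum_{i\le k}t_i\bigr)\mathfrak{S}_v$. Here the diagonal term is exactly $\mathfrak{S}_{\delta[1,k]}(t_{\Delta_k(v,v)};t)$, since $\Delta_k(v,v)=\{v(1),\dots,v(k)\}$. For the inductive step I would use a factorial-elementary recursion that expresses $\mathfrak{S}_{\delta[1,k]}\cdot\mathfrak{S}_{\delta[r-1,k]}$ in terms of $\mathfrak{S}_{\delta[r,k]}$, $\mathfrak{S}_{\delta[r-1,k]}$ and shifted-$t$ versions, in the spirit of $e_1e_{r-1}=e_r+e_{(r-1,1)}$ with factorial corrections. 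Multiplying by $\mathfrak{S}_v$ and applying induction then produces sums over concatenations of a Monk step with a decreasing path.

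The main obstacle is the combinatorial cancellation in this step. One must show that the concatenated paths which fail to be decreasing cancel, by a sign-reversing involution that swaps the order of two adjacent edges as in Sottile's proof of the ordinary Pieri rule. What survives must be exactly the decreasing paths $v\rightsquigarrow w$ of length $r'\le r$. Their weights must recombine, via the factorial recursion applied to the set $\Delta_k$, into $\mathfrak{S}_{\delta[r-r',k-r']}(t_{\Delta_k(v,w)};t)$. The condition $\delta[r,k]\le w$ should enter by discarding the terms whose localization vanishes. If this bookkeeping becomes unwieldy, I would fall back on verifying the localization identity directly at $w=v$ and then inducting on $\ell(w)-\ell(v)$ using the Sottile–Bergeron correspondence of chains.
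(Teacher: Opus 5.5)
The paper does not prove this statement; it is quoted from Robinson and LSY, so your proposal has to stand on its own. Several preliminaries are fine:
\begin{itemize}
\item the triangular localization system does determine the coefficients;
\item the factorial elementary formula for $\mathfrak{S}_{\delta[r,k]}(x;t)$ is correct (it gives $\sum_{i\le k}(x_i-t_i)$ for $r=1$ and $\prod_{i\le k}(x_i-t_1)$ for $r=k$);
\item the equivariant Monk rule is exactly the case $r=1$, with diagonal term $\mathfrak{S}_{\delta[1,k]}(t_{\Delta_k(v,v)};t)$.
\end{itemize}

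\textbf{The inductive step cannot work as described.} At fixed $k$ and $2\le r\le k$, the product $\mathfrak{S}_{s_k}(x;t)\,\mathfrak{S}_{\delta[r-1,k]}(x;t)$ has top $x$-degree part $e_1e_{r-1}(x_1,\dots,x_k)=e_r+s_{(2,1^{r-2})}$. So it contains the non-Pieri hook class $\mathfrak{S}_{w_{(2,1^{r-2})}}(x;t)$ with coefficient $1$.

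No identity can express $\mathfrak{S}_{\delta[r,k]}$ through such Monk products (with any shifts of $t$) plus Pieri classes. Setting $t=0$, every Monk product becomes $e_1e_{r-1}$ and every Pieri class becomes some $e_j$. Comparing degree-$r$ parts would then force $e_r\in\mathbb{Q}\,e_1e_{r-1}$, which is false. (As literally written, $e_1e_{r-1}=e_r+e_{(r-1,1)}$ even says $e_r=0$.) So an induction on $r$ at fixed $k$ does not close without a rule for hook classes, which you do not have.

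A recursion that does close must change $k$, for example
\[
\mathfrak{S}_{\delta[r,k]}(x;t)=\mathfrak{S}_{\delta[r,k-1]}(x;t)+(x_k-t_{k-r+1})\,\mathfrak{S}_{\delta[r-1,k-1]}(x;t),
\]
with $x_k-t_k=\mathfrak{S}_{s_k}(x;t)-\mathfrak{S}_{s_{k-1}}(x;t)$. You would then have to convert decreasing $(k-1)$-paths followed by a $k$- or $(k-1)$-Monk step into decreasing $k$-paths.

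\textbf{Even with a correct recursion, the substance of the theorem is only asserted.} You still need that non-decreasing concatenations cancel, and that the surviving equivariant weights reassemble into $\mathfrak{S}_{\delta[r-r',k-r']}(t_{\Delta_k(v,w)};t)$. Those weights are products of diagonal Monk factors $\sum_{i\le k}(t_{z(i)}-t_i)$ at intermediate permutations $z$.

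There is no involution in Sottile's work to imitate, since his proof of the ordinary Pieri rule is geometric. In the equivariant setting, contributions that would pair up carry $t$-weights depending on where the diagonal terms occur. So you need a polynomial identity among weights, not just a pairing of paths.

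Your fallback is not operational either: checking the localization system directly needs $\mathfrak{S}_z(t_w;t)$ for all $v\le z\le w$. The standard workable criterion comes from expanding $\mathfrak{S}_{s_j}\mathfrak{S}_u\mathfrak{S}_v$ in two ways:
\[
\Bigl(\sum_{i\le j}\bigl(t_{w(i)}-t_{v(i)}\bigr)\Bigr)c^w_{u,v}(t)=\sum_{v\lessdot_j v'}c^w_{u,v'}(t)-\sum_{w'\lessdot_j w}c^{w'}_{u,v}(t),
\]
together with $c^v_{u,v}(t)=\mathfrak{S}_u(t_v;t)$. Choosing $j$ with $\{w(1),\dots,w(j)\}\neq\{v(1),\dots,v(j)\}$, this determines all coefficients by induction on $\ell(w)-\ell(v)$.

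A proof along your lines then reduces to verifying that the proposed path sums satisfy this recursion for $u=\delta[r,k]$. That is a genuine combinatorial lemma about decreasing $k$-paths, $j$-Bruhat covers and factorial elementary polynomials, and your proposal does not supply it.
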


\begin{theorem}[triple Pieri rule \cite{Samuel}]\label{triplepieri}
Let $v\in S_\infty$. Then
\begin{align*}
\mathfrak{S}_{\delta[r,k]}(x;y)\cdot\mathfrak{S}_v(x;t)
& =
\sum_{\substack{ \text{decreasing $k$-path}\\
\gamma:\, v\rightsquigarrow w,\,
\ell(\gamma)=r'\le r}}\mathfrak{S}_{\delta[r-r',k-r']}(t_{\Delta_k(v,w)};y)
\cdot\mathfrak{S}_{w}(x;t).\\[5pt]
\mathfrak{S}_{\sigma[r,k]}(x;y)\cdot\mathfrak{S}_v(x;t)
& =
\sum_{\substack{\text{increasing $k$-path}\\
\gamma:\,v\rightsquigarrow w,\,
\ell(\gamma)=r'\le r}}\mathfrak{S}_{\sigma[r-r',k+r']}(t_{\Sigma_k(v,w)};y)
\cdot\mathfrak{S}_{w}(x;t).
\end{align*}
\end{theorem}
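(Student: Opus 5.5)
The plan is to deduce the triple Pieri rule from the double Pieri rule (Theorem~\ref{doublepieri}) by a change of the secondary variables $y\to t$. I treat only the $\delta[r,k]$ case; the $\sigma[r,k]$ case should be identical with $\Delta_k$ replaced by $\Sigma_k$ and $k-r'$ by $k+r'$.

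\textbf{Step 1: change of secondary variables.} I would use the double Cauchy formula for double Schubert polynomials. This is the same formula as the one used in the proof of Theorem~\ref{thm:i->3}, now kept at full generality:
\[
\mathfrak{S}_u(x;y)=\sum_{\text{reduced }u=u_1\cdot u_2}\mathfrak{S}_{u_1^{-1}}(t;y)\cdot\mathfrak{S}_{u_2}(x;t),
\]
up to the sign convention for $\mathfrak{S}_{u_1^{-1}}(t;y)$, which I would check against $\mathfrak{S}_{s_k}(x;y)=x_1+\cdots+x_k-y_1-\cdots-y_k$. For $u=\delta[r,k]=s_{k-r+1}\cdots s_k$, every reduced right factor is a suffix $u_2=\delta[j,k]$ with $0\le j\le r$. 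The matching left factor is $u_1=s_{k-r+1}\cdots s_{k-j}$, so $u_1^{-1}=s_{k-j}\cdots s_{k-r+1}$, which is a $\sigma$-type cycle.

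\textbf{Step 2: apply the double Pieri rule and collect terms.} Expanding each $\mathfrak{S}_{\delta[j,k]}(x;t)\mathfrak{S}_v(x;t)$ by Theorem~\ref{doublepieri}, the coefficient of $\mathfrak{S}_w(x;t)$, for $w$ reached from $v$ by a decreasing $k$-path of length $r'$, becomes
\[
\sum_{j=r'}^{r}\mathfrak{S}_{u_1^{-1}}(t;y)\cdot\mathfrak{S}_{\delta[j-r',k-r']}(t_{\Delta_k(v,w)};t).
\]
A small check is needed here. The condition $\delta[j,k]\le w$ in the double rule only removes terms whose factorial coefficient already vanishes, so it may be dropped. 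Alternatively, the condition is automatic whenever the coefficient is nonzero.

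\textbf{Step 3: the key identity.} It then remains to prove
\[
\sum_{j}\mathfrak{S}_{s_{k-j}\cdots s_{k-r+1}}(t;y)\cdot\mathfrak{S}_{\delta[j-r',k-r']}(t_{\Delta};t)=\mathfrak{S}_{\delta[r-r',k-r']}(t_{\Delta};y),\qquad |\Delta|=k-r'.
\]
This is the step I expect to be the main obstacle. My approach is to use that $\mathfrak{S}_{\delta[m,k]}(z;y)$ is a factorial elementary symmetric function $e_m(z_1,\dots,z_k\,|\,y)$ with a prescribed shift. Likewise, $\mathfrak{S}_{\sigma[\,\cdot\,]}(t;y)$ is a factorial complete homogeneous function in $t$ against $y$. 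The identity should then follow from a generating-function argument: write $\sum_m e_m(z\,|\,y)\,q^m$ as a product of linear factors, and insert the telescoping factorization $\prod(1+q(z-y))$ relating $y$ to $t$.

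The subtlety is that the $z$-variables are $t_\Delta$, a subset of $t$, rather than $t_1,\dots,t_k$. I would therefore first verify the identity when $\Delta=\{1,\dots,k-r'\}$. For general $\Delta$ I would then argue by the symmetry of the left-hand side in the $z$-variables, together with polynomial interpolation: vanishing at the points $z=t_{\text{(subsets)}}$.

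If the direct identity becomes unwieldy, the fallback is a uniqueness argument. The triple coefficient is pinned down by two facts: applying the left Demazure operators $\varpi_i$ in $y$ shortens $u$, by \eqref{leftoperator}, and the coefficient specializes at $y=t$ to the double Pieri coefficient. I would check that the proposed right-hand side satisfies the same $\varpi_{k-r+1}$ recursion, lowering $r$ by one, and the same specialization. Then I would conclude by induction on $r$, with base case $r=0$, where everything is trivial.
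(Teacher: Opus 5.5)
The paper gives no proof of this theorem; it is quoted from \cite{Samuel}. On its own terms, your reduction to the double Pieri rule is viable. However, Step~1 misstates the Cauchy formula, and this makes the Step~3 identity false as written. The correct formula is
\[
\mathfrak{S}_u(x;y)=\sum_{\text{reduced }u=u_1\cdot u_2}\mathfrak{S}_{u_1^{-1}}(-y;-t)\,\mathfrak{S}_{u_2}(x;t)=\sum_{\text{reduced }u=u_1\cdot u_2}\mathfrak{S}_{u_1}(t;y)\,\mathfrak{S}_{u_2}(x;t),
\]
where the second form uses $\mathfrak{S}_w(x;y)=\mathfrak{S}_{w^{-1}}(-y;-x)$. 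The paper's $\mathfrak{S}_{u_1^{-1}}(1)$ is the specialization $t=0$, $y=-1$ of $\mathfrak{S}_{u_1^{-1}}(-y;-t)$, not of $\mathfrak{S}_{u_1^{-1}}(t;y)$. Your proposed test with $\mathfrak{S}_{s_k}$ cannot detect the difference, because $s_k$ is an involution.

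So the factor attached to $u_2=\delta[j,k]$ is $\mathfrak{S}_{\delta[r-j,k-j]}(t;y)$. This is a factorial elementary function of $t$ against $y$, not a factorial complete homogeneous one. For a concrete check, take $r=k=2$ and $v=w=312$, so $r'=0$ and $\Delta=\{1,3\}$. The correct sum is $(t_1-y_1)(t_2-y_1)+(t_1-y_1)(t_3-t_2)+0=(t_1-y_1)(t_3-y_1)$, which matches the table in Section~\ref{section3}. Your $j=0$ factor $\mathfrak{S}_{312}(t;y)=(t_1-y_1)(t_1-y_2)$ instead gives $(t_1-y_1)(t_1-t_2+t_3-y_2)$.

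With the correct factor, your main obstacle disappears. The reduced factorizations of $\delta[r-r',k-r']$ are exactly $\delta[r-j,k-j]\cdot\delta[j-r',k-r']$ for $r'\le j\le r$. Hence the Step~3 identity is the same Cauchy formula for $\mathfrak{S}_{\delta[r-r',k-r']}(z;y)$, followed by the ring homomorphism $z\mapsto t_\Delta$. No generating functions or interpolation are needed, and the same holds for $\sigma[r,k]$.

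Your Step~2 claim is true, but it needs an argument. First, $\delta[j,k]\le w$ iff $[k-j+1]\not\subseteq w[k]$, and $\mathfrak{S}_{\delta[j-r',k-r']}(t_\Delta;t)\neq 0$ iff $[k-j+1]\not\subseteq\Delta$. Now suppose $[m]\subseteq w[k]$. Every $x$ on the path satisfies $x[k]\le w[k]$ in Gale order, so $[m]\subseteq x[k]$. Hence no cover moves a value $\le m$, and $[m]\subseteq\Delta$. The converse follows from $\Delta\subseteq w[k]$.

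Your fallback is also sound, provided you use all $\varpi_i$, not only $\varpi_{k-r+1}$. For $i\neq k-r+1$, both sides are killed by $\varpi_i$, since $k-r+1$ is the unique left descent of both $\delta[r,k]$ and $\delta[r-r',k-r']$. A polynomial in finitely many $y$'s that is killed by every $\varpi_i$ is independent of $y$, hence zero if it vanishes at $y=t$. The fallback also relies on the Step~2 check, to match its $y=t$ specialization with the double Pieri coefficient.
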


\begin{example}
\def\O#1{\raisebox{0.1ex}{\makebox[1pc]{%
    \makebox[0pc]{$\bigcirc$}%
    \makebox[0pc]{$\scriptstyle #1$}}}}
Let $u=231=s_1s_2$ and $v=312=s_2s_1$. 
We have $u=\delta[r,k]$ for $k=2, $ $r=2$. 
The $2$-Bruhat order over $v=312$ looks like 
$$
\begin{matrix}
\def\objectstyle{\sf}
\xymatrix@!=1.0pc{
\cdots&&\cdots&&\cdots\\
3412\color{gray}5\ar[u]&&
4213\color{gray}5\ar[u]&& 
51234\color{gray}\ar[u]\\&
32|1\color{gray}45
    \ar[ul]^2\ar[ur]_3&&
41|23\color{gray}5
    \ar[ul]^1\ar[ur]_4\\&&
31|2\color{gray}45
    \ar[ul]^1\ar[ur]_3
}
\end{matrix}\qquad 
\begin{array}{cll}\hline
\vphantom{\dfrac23}&
\hfill\textnormal{decreasing paths}\hfill & \\\hline
\O{1}&{\sf 31|2\color{gray}45} 
& \not\geq {\sf 23|1\color{gray}45}\\
\O{2}&
{\sf 31|2\color{gray}45}
\stackrel{1}\longrightarrow
{\sf 32|1\color{gray}45} 
&\geq {\sf 23|1\color{gray}45}\\
\O{3}&
{\sf 31|2\color{gray}45}
\stackrel{3}\longrightarrow
{\sf 41|23\color{gray}5} 
&\not\geq {\sf 23|1\color{gray}45}\\
\O{4}&
{\sf 31|2\color{gray}45}
\stackrel{3}\longrightarrow
{\sf 41|23\color{gray}5}
\stackrel{1}\longrightarrow
{\sf 42|13\color{gray}5} 
&\geq {\sf 23|1\color{gray}45}\\\hline
\end{array}
$$
By Theorem \ref{triplepieri}, the triple and double coefficients $c_{u,v}^w(t;y)$ and $c_{u,v}^w(t)$ can be computed as follows. 
$$\begin{array}{c|c|c|c|c}\hline
\vphantom{\dfrac23}
&
\textnormal{length $r'$}&
\Delta_k&
c_{u,v}^w(t;y)=\mathfrak{S}_{\delta[r-r',k-r']}(t_{\Delta_{k}};y)& 
c_{u,v}^w(t;t)=
\mathfrak{S}_{\delta[r-r',k-r']}(t_{\Delta_{k}};t)\\\hline
\O{1} &0  & \{1,3\} & (t_1-y_1)(t_3-y_1) & 0 \\
\O{2} &1& \{3\}& t_3-y_1 & t_3-t_1\\
\O{3}
    &1& \{1\} & t_1-y_1 & 0 \\
\O{4}    &2& \varnothing & 1 & 1
\\\hline
\end{array}$$
This agrees with the red circle in Example \ref{eg:runningeg}. 

Similarly, 
let $u=312=s_2s_1$ and $v=231=s_1s_2$. 
We can view $u=\sigma[r,k]$ for $k=1,r=2$. 
By Theorem \ref{triplepieri}, the triple and double coefficients $c_{u,v}^w(t;y)$ and $c_{u,v}^w(t)$ can be computed as follows
$$
\hspace{-10pc}
\begin{array}{cc|c|c|c|c}\hline
\vphantom{\dfrac12}
\textnormal{increasing paths}&&
\textnormal{length }r'&\Sigma_k&
c_{u,v}^w(t;y)
& 
c_{u,v}^w(t)
\\\hline
{\sf 2|31\color{gray}45}\hfill&
    \not\geq{\sf 3|12\color{gray}45}&
    0& \{2\} & (t_2-y_1)(t_2-y_2) & 0\\
{\sf 2|31\color{gray}45}
\stackrel{2}\to
{\sf 3|21\color{gray}45}\hfill&
    \geq{\sf 3|12\color{gray}45}&
    1 & \{2,3\} & t_2+t_3-y_1-y_2 & t_3-t_1\\
{\sf 2|31\color{gray}45}
\stackrel{2}\to
{\sf 3|21\color{gray}45}
\stackrel{3}\to
{\sf 4|213\color{gray}5}\hfill&
    \geq{\sf 3|12\color{gray}45}& 
    2 & \{2,3,4\} & 1 &1\\\hline
\end{array}
\hspace{-10pc}
$$
This agrees with the blue circle in Example \ref{eg:runningeg}. 
\end{example}

\begin{lemma}\label{fangda1}
For any $u,v\in S_\infty$, 
$$\supp^i(u,v)\subseteq 
\{w\in S_\infty: v\leq w\},\qquad 
\text{for }i=1,2,3. $$
\end{lemma}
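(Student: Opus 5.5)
By \eqref{eq:1<2<3} it is enough to treat $i=3$. Moreover, Theorem \ref{thm:i->3} with $i=1$ gives
$$\supp^3(u,v)=\bigcup_{\text{reduced }u=u_1\cdot u_2}\supp^1(u_2,v).$$
So the statement reduces to a claim about single supports: if $c_{u',v}^w\neq 0$ for some $u'$, then $v\le w$ in Bruhat order. The plan is to prove this single-Schubert claim once, for arbitrary $u'$ and $v$, and then apply it with $u'=u_2$.

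For the single claim I would use the classical geometric description of Schubert calculus. Write $[X_v]$ for the class in $H^*(Fl_n)$ represented by $\mathfrak{S}_v$, a Schubert variety of codimension $\ell(v)$. Then $c_{u',v}^w$ is the intersection number of $X_{u'}$, $X_v$ and the opposite Schubert variety $X^{w_0w}$ (with suitable conventions, after choosing $n$ large enough that all three permutations lie in $S_n$). If this number is nonzero, the intersection $X_v\cap X^{w_0 w}$ is nonempty. A Richardson variety $X_v\cap X^{w'}$ is nonempty exactly when the relevant Bruhat inequality holds, and with the conventions matching $\mathfrak{S}_v$ this inequality is precisely $v\le w$.

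If a purely algebraic argument is preferred, I would instead use the duality pairing. Since $c_{u',v}^w$ is the coefficient of $\mathfrak{S}_w$ in $\mathfrak{S}_{u'}\mathfrak{S}_v$, we have $c_{u',v}^w = \partial_{w}(\mathfrak{S}_{u'}\mathfrak{S}_v)|_{x=0}$. I would then expand $\partial_w$ along a reduced word with the twisted Leibniz rule. The Leibniz expansion writes $\partial_w(fg)$ as a sum of terms in which a subword of the reduced word acts on $\mathfrak{S}_v$. Such a term is nonzero only if that subword's product, taken with length additivity, equals $v$ as a Demazure-type product. This forces $v$ to be a subword of a reduced word of $w$, i.e.\ $v\le w$.

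The main obstacle is getting the Leibniz bookkeeping right. In the product rule, $\partial_k(fg)=\partial_k(f)g+s_k(f)\partial_k(g)$, the operators $s_k$ act on $\mathfrak{S}_{u'}$, and one has to check that only subwords of the reduced word of $w$ ever hit $\mathfrak{S}_v$. I would sidestep this by citing the standard Richardson nonemptiness criterion, or the equivalent fact that $\mathfrak{S}_v$ times any polynomial expands only in $\mathfrak{S}_w$ with $w\ge v$ (the ideal generated by a Schubert class is spanned by Schubert classes above it). That last fact alone already gives the single claim, and then Theorem \ref{thm:i->3} transports it to $i=2,3$.
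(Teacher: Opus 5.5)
Your argument is correct, but it takes a different route from the paper. You push the triple statement down to single supports via Theorem \ref{thm:i->3}. You then use the classical fact that $c_{u',v}^w\neq0$ forces $v\le w$ (Richardson nonemptiness, or the ideal generated by a Schubert class).

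This is not circular, since Theorem \ref{thm:i->3} is proved in Section \ref{section2} without this lemma. However, its ``$\subseteq$'' direction rests on the Gao--Xiong Graham positivity of $c_{u,v}^w(t;y)$. That is a heavy input for what is really a triangularity statement.

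The paper works directly at the triple level. For an arbitrary polynomial $f(x;t;y)$ it shows that $f\cdot\mathfrak{S}_v(x;t)$ expands only in $\mathfrak{S}_w(x;t)$ with $v\le w$. It does this by localizing at a minimal-length counterexample $\overline{w}$ and using $\mathfrak{S}_z(\overline{w}t;t)\neq0\iff z\le\overline{w}$. This proof is self-contained and positivity-free, and it is exactly the equivariant version of the ``ideal'' fact you cite at the end.

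The Leibniz route you set aside has no real obstacle, and it would let you bypass Theorem \ref{thm:i->3} as well. Because the twist in $\partial_k(fg)=\partial_k(f)g+s_k(f)\partial_k(g)$ lands on $f$, the factor $\mathfrak{S}_v(x;t)$ only ever receives $\partial_{j_1}\cdots\partial_{j_m}$ for a subword of the chosen reduced word of $w$. Now use $c_{u,v}^w(t;y)=\partial_w\bigl(\mathfrak{S}_u(x;y)\mathfrak{S}_v(x;t)\bigr)\big|_{x=t}$ together with $\mathfrak{S}_z(t;t)=\delta_{z,e}$. A term survives only if that subword is a reduced word of $v$, which gives $v\le w$ by the subword property, directly for $i=3$.
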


\begin{proof} 
By \eqref{eq:1<2<3}, it suffices to prove the case $i=3$. In fact, we can replace $\mathfrak{S}_u(x;y)$ by any polynomial  $f(x;t;y)$, and show that in the expansion 
\begin{equation}\label{localiz}
f(x;t;y)\cdot \mathfrak{S}_v(x;t)
=\sum_{w} d_w(t;y)\cdot \mathfrak{S}_{w}(x;t),   
\end{equation}
each non-vanishing $\mathfrak{S}_{w}(x;t)$ must satisfy $v\le w.$

Recall that the localization of any Schubert polynomial $\mathfrak{S}_z(x;t)$ at a fixed point $w=w(1)\cdots w(n)\in S_n$ is 
\[
\mathfrak{S}_z(x;t)|_w:=\mathfrak{S}_z(wt;t)=\mathfrak{S}_z(t_{w(1)},t_{w(2)},\ldots,t_{w(n)};t).
\]
It is well known that
\[
\mathfrak{S}_{z}(x;t)|_w=\mathfrak{S}_{z}(wt;t)\neq 0 \quad \text{if and only if \quad} z\le w, 
\]
see, for example, Billey \cite{Billey} and Knutson \cite{Knutson}.
 
Now suppose to the contrary that there is some $\overline{w}$ appearing in the right-hand side of \eqref{localiz} such that $v\not\le \overline{w}$. Choose such a $\overline{w}$  with minimal length. Then localize both sides of \eqref{localiz} at $\overline{w}$, we obtain
\begin{equation}\label{eq:local2}
f(\overline{w}t;t;y)\cdot \mathfrak{S}_v(\overline{w}t;t)
=\sum_{w} d_w(t;y)\cdot \mathfrak{S}_{w}(\overline{w}t;t).
\end{equation}
Since $\mathfrak{S}_{z}(x;t)|_w=\mathfrak{S}_{z}(wt;t)=0$, if $w\not\ge z$, and $\mathfrak{S}_{w}(x;t)|_w=\mathfrak{S}_{w}(wt;t)\neq0,$ we find that  \eqref{eq:local2} becomes
\begin{equation}\label{eq:local3}
0=d_{\overline{w}}(t;y)\mathfrak{S}_{\overline{w}}(\overline{w}t;t)+\sum_{z<\overline{w}}d_z(t;y)\mathfrak{S}_z(\overline{w}t;t).     
\end{equation}
Since $\overline{w}$  has minimal length, each $\mathfrak{S}_z(\overline{w}t;t)$ in the  right-hand side sum  of \eqref{eq:local3} satisfies $v\le z<\overline{w}$, which contradicts with the hypothesis $v\not\le \overline{w}$. Thus the sum in the right-hand side of \eqref{eq:local3} is empty, and we are led to $0=d_{\overline{w}}(t;y)\mathfrak{S}_{\overline{w}}(\overline{w}t;t)$,  contradiction.
\end{proof}

\begin{proof}[Proof of Theorem \ref{th:piericase}]

By Lemma \ref{fangda1}, 
\begin{align*}
\supp^3(\delta[r,k],v)\cap \supp^3(v,\delta[r,k])&\subseteq \supp^3(\delta[r,k],v)\cap
\{w\in S_\infty: \delta[r,k]\leq w\}\nonumber\\
&=\big\{w\in \supp^3(\delta[r,k],v): \delta[r,k]\le w\big\}.
\end{align*}
On the other hand, 
\[
\supp^2(\delta[r,k],v)=\big\{w\in \supp^3(\delta[r,k],v): c_{\delta[r,k],v}^w(t;t)\neq 0\big\}.
\]
Therefore, in order to prove \eqref{eq:epieri}, it suffices to show
\begin{equation}\label{incls} 
\big\{w\in \supp^3(\delta[r,k],v): \delta[r,k]\le w\big\}\subseteq \big\{w\in \supp^3(\delta[r,k],v): c_{\delta[r,k],v}^w(t;t)\neq 0\big\}.
\end{equation}

By Theorem \ref{triplepieri}, $c_{\delta[r,k],v}^w(t;y)=\mathfrak{S}_{\delta[r-r',k-r']}(t_{\Delta_k(v,w)};y)$. Thus
\[
c_{\delta[r,k],v}^w(t;t)=\mathfrak{S}_{\delta[r-r',k-r']}(t_{\Delta_k(v,w)};t)=\mathfrak{S}_{\delta[r-r',k-r']}(x;t)|_{\pi},
\]
where $\pi\in S_\infty$ is  any permutation such that $\pi([k-r'])=\Delta_k(v,w).$ Therefore,
\begin{align*}
c_{\delta[r,k],v}^w(t;t)\neq 0 &\Longleftrightarrow \mathfrak{S}_{\delta[r-r',k-r']}(x;t)|_{\pi}\neq0\\
&\Longleftrightarrow \delta[r-r',k-r']\le \pi, \,\text{for any permutation $\pi([k-r'])=\Delta_k(v,w)$}.
\end{align*}
Without loss of generality, we can assume $\Delta_k(v,w)=\{\delta_1<\cdots<\delta_{k-r'}\}$, and let $\pi\in S_\infty$ be the $(k-r')$-Grassmannian permutation determined by
\(
\pi(i)=\delta_i    
\),
for $i\le k-r'$. 
To show the inclusion \eqref{incls} holds, it suffices to show that 
\[
\text{if $\delta[r,k]\le w$, then  $\delta[r-r',k-r']\le \pi$ }
\]
for $\pi(i)=\delta_i$ as defined above.

Recall that for two subsets $A,B\subseteq[n]$ with the same cadinality, we say $A\le B$ under the Gale order if  the $i$-th smallest element of $A$ is less than or equal to the $i$-th smallest element of $B$ for all $i$. It is easy to see that if $A\le B$, then 
\begin{equation}\label{eq:galeorder}
A\setminus \{\max(A)\}\le B\setminus \{b\}  
\end{equation}
for any $b\in B.$
For two permutations $u,v\in S_n$, we have $u\le v$ under Bruhat order if and only if $u[i]:=\{u(1),\ldots,u(i)\}\le \{v(1),\ldots,v(i)\}=:v[i],$ for all $1\le i\le n$.
Moreover, if $u$ is a $k$-Grassmannian permutation, then $u\leq v$ if and only if $u[i]\leq v[i]$ for $i=1,\ldots,k$. 

Now we prove the inclusion \eqref{incls}. Assume that $\delta[r,k]=s_{k-r+1}\cdots s_k\le w$. Then in the Gale order, we have
\begin{equation}\label{gale}
\{1,2,\ldots,k-r,k-r+2,\ldots,k+1\}\le\{w(1),\ldots,w(k)\}.
\end{equation}
Deleting the largest $r'$ elements from the left-hand side of \eqref{gale}, and deleting the $r'$ labels of the decreasing $k$-path $\gamma: v\rightsquigarrow w$ from the right-hand side of \eqref{gale}, by \eqref{eq:galeorder}, we are led to 
\begin{equation}\label{galeconclu}
\{1,2,\ldots,k-r,k-r+2,\ldots,k-r'+1\}\le \Delta_k(v,w).
\end{equation}
Since $\delta[r-r',k-r']$ is a Grassmannian with descent at $k-r'$, the first $k-r'$ elements are increasing, we see that  \eqref{galeconclu}  implies $\delta[r-r',k-r']\le \pi$, as desired. 
\end{proof}

\section{Separated Descents Case}\label{section4}

In this section, we consider the case $u,v$ are of separated descents at position $k$, i.e., 
$$\max(\operatorname{des}(u))\leq k\leq \min(\operatorname{des}(v)),$$
where for a permutation $u\in S_n$, its descent set is $des(u)=\{i: u(i)>u(i+1)\}$.

\begin{theorem}\label{th:4.1}
Let $u,v$ be permutations with separated descents. Then
\begin{align}\label{eq:grass}
\supp^2(u,v)=\supp^3(u,v)\cap \supp^3(v,u).
\end{align}
\end{theorem}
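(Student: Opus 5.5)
Following the proof of Theorem \ref{th:piericase}, the inclusion ``$\subseteq$'' already holds by Theorem \ref{thm:123conj'}. So it remains to show that every $w\in\supp^3(u,v)\cap\supp^3(v,u)$ satisfies $c_{u,v}^w(t)\neq0$. I would rely on the puzzle rules for separated descents: the Knutson--Zinn-Justin rule for double coefficients, and the Fan--Guo--Xiong and Samuel rules for triple coefficients. In these rules $c_{u,v}^w(t;y)$ is a sum over puzzles $P$ with boundary $(u,v,w)$, each weighted by a product $\prod_{e}(t_{i(e)}-y_{j(e)})$ over its equivariant pieces. The double coefficient is the same sum after specializing $y=t$. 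A puzzle contributes nonzero in the double case exactly when no equivariant piece has $i(e)=j(e)$; in the Graham-positive normalization, each such factor becomes some $t_a-t_b$ with $a>b$.

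\textbf{Step 1.} Set up the triple puzzle rule. By the Graham positivity of \cite{Gaox}, there is no cancellation, so $w\in\supp^3(u,v)$ if and only if some triple puzzle with boundary $(u,v,w)$ exists. I would also use the characterization of Theorem \ref{thm:i->3}: $\supp^3(u,v)=\bigcup_{u=u_1u_2}\supp^1(u_2,v)$. In puzzle language, this says that the equivariant pieces of a triple puzzle can be ``resolved'' into a product of a non-equivariant puzzle for $(u_2,v,w)$ and the Cauchy factor from $u_1$.

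\textbf{Step 2.} Show that the extra hypothesis $w\in\supp^3(v,u)$ forces some triple puzzle for $(u,v,w)$ in which every equivariant piece has a strictly positive label difference. Equivalently, the puzzle contains no degenerate piece whose row and column labels coincide. By Lemma \ref{fangda1}, $w\in\supp^3(v,u)$ gives $u\le w$; more precisely, Theorem \ref{thm:i->3} gives $w\in\supp^1(u,v_2)$ for some reduced $v=v_1v_2$. In the Pieri case, the key step was to turn $u\le w$ into Gale-order inequalities on the relevant label sets. Here I would do the same: read off, for each equivariant piece, the pair of labels $(i,j)$ from the positions along the boundary, and compare them using Gale-order (tableau) inequalities that come from $u\le w$ and $v\le w$ restricted to the first $k$ positions. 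For separated descents, $u$ is essentially determined by its restriction to the first $k$ positions and $v$ by the complementary data, so these Bruhat conditions reduce to Grassmannian-type Gale conditions.

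\textbf{Step 3.} Among all triple puzzles for $(u,v,w)$, choose one that is extremal, for example one that minimizes a statistic such as the sum of the $y$-indices of its equivariant pieces. Then argue that a degenerate piece can be removed by a local move (a ``migration'' or gash-propagation), unless doing so would violate the Gale inequality from Step 2. It would then follow that the extremal puzzle specializes to a nonzero term at $y=t$. Graham positivity of the double coefficients guarantees that no cancellation occurs in the sum, so $c_{u,v}^w(t)\neq0$.

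\textbf{Main obstacle.} I expect Step 3 to be the main obstacle: showing that the combinatorial condition $u\le w$, together with $w\in\supp^3(u,v)$, suffices to produce a non-degenerate puzzle. My first attempt would be to mimic the Gale argument of \eqref{galeconclu}, matching equivariant pieces bijectively with the elements removed by \eqref{eq:galeorder}. If the puzzle moves prove unwieldy, the fallback is to reduce to the Grassmannian case via the separated-descent factorization and then use the explicit factorial-Schur (Molev--Sagan) formula. In that setting a coefficient is nonzero exactly when an equivariant LR tableau exists with entries satisfying strict inequalities, and those inequalities follow from $\lambda\subseteq\nu$ and $\mu\subseteq\nu$.
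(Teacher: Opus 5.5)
Your frame is right: you reduce to $\supp^3(u,v)\cap\supp^3(v,u)\subseteq\supp^2(u,v)$ and look for a triple puzzle whose weight survives $y=t$. But Steps 2 and 3 are a plan rather than an argument, and they carry the whole content of the theorem.

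\textbf{The first missing idea is the right condition on $w$.} Lemma~\ref{fangda1} only gives Bruhat $u\le w$, i.e.\ Gale inequalities on prefix sets. The puzzle argument needs the entrywise inequality $u(i)\le w(i)$ for all $i\le k$. In general this is strictly stronger: $132\le 312$ in Bruhat order, yet $3>1$ in position $2$. You give no reason why Gale inequalities would suffice.

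The paper obtains the entrywise condition in Lemma~\ref{lemma4.1}:
\begin{itemize}
\item since $\min(\operatorname{des}(v))\ge k$, $\mathfrak{S}_v(x;y)$ is symmetric in $x_1,\ldots,x_k$;
\item hence it lies in the $\mathbb{Q}[y]$-algebra generated by $e_r(x_1,\ldots,x_k)$ and $x_1+\cdots+x_m$ for $m>k$;
\item by the Pieri rule, multiplying $\mathfrak{S}_z(x;t)$ by any such generator only produces $\mathfrak{S}_w(x;t)$ with $z(i)\le w(i)$ for $i\le k$.
\end{itemize}
Your observation that $w\in\supp^1(u,v_2)$ reaches the same conclusion only via an argument of this kind.

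\textbf{The second missing idea is the mechanism that removes degenerate factors.} In the pipe-puzzle model of Theorem~\ref{HHUU}, the weight is $\prod(t_j-y_i)$ over empty tiles at $(i,j)$. So a puzzle survives $y=t$ if and only if no diagonal tile is empty. The entrywise condition is exactly what makes the pipe exiting column $i$ pass through row $i$: if its label $j=w^{-1}(i)$ is $\le k$, it enters at row $u(j)\le w(j)=i$.

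Take the smallest $i$ with $(i,i)$ empty. Every earlier pipe then runs straight up to its diagonal tile. Cut out the segment of the $i$-th pipe between where it leaves column $i$ and where it enters row $i$. Droop the pipes in the enclosed region, and redraw the $i$-th pipe along column $i$ and row $i$ through $(i,i)$. Iterating clears the diagonal. Your ``extremal puzzle plus local migration'' specifies no move and ties it to no inequality.

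\textbf{Two smaller points.}
\begin{itemize}
\item The no-cancellation step must rest on termwise positivity, not on Graham positivity of the total, which alone does not exclude cancellation. Termwise positivity holds here: in a pipe puzzle, an empty tile strictly below the diagonal forces an empty diagonal tile. So every surviving weight is a product of factors $t_j-t_i$ with $j>i$.
\item Your fallback (reduce to Grassmannian permutations and use the Molev--Sagan formula) does not apply as stated. For separated descents, $\mathfrak{S}_u$ and $\mathfrak{S}_v$ are generally not (factorial) Schur polynomials.
\end{itemize}
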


We first prove the following lemma.
\begin{lemma}\label{lemma4.1}
For $u,v$ with $\max(\operatorname{des}(u))\leq k\le\min(\operatorname{des}(v))$, we have
    $$\supp^3(v,u)\subseteq \{w\in S_\infty: u(i)\leq w(i),\text{ for  }1\leq i\leq k\}.$$
\end{lemma}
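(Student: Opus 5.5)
The plan is to reduce from the triple support to single supports, and then use the symmetry of the relevant Schubert polynomials in $x_1,\dots,x_k$ to get the pointwise comparison $u(i)\le w(i)$ for $i\le k$.

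\textbf{Step 1: reduction to single supports.} By Theorem \ref{thm:i->3} with $i=1$, applied with the roles of the two permutations exchanged,
$$\supp^3(v,u)=\bigcup_{\text{reduced } v=v_1\cdot v_2}\supp^1(v_2,u).$$
So it suffices to prove the inclusion for $\supp^1(v_2,u)$ with $v_2$ a right reduced factor of $v$. If $v_2s_i<v_2$, then $vs_i=v_1(v_2s_i)$ has length at most $\ell(v)-1$, so $vs_i<v$. Hence $\operatorname{des}(v_2)\subseteq\operatorname{des}(v)$, and $v_2$ still satisfies $k\le\min\operatorname{des}(v_2)$ (or $v_2=e$). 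We are therefore reduced to the single case: if $\max\operatorname{des}(u)\le k\le\min\operatorname{des}(v)$ and $c_{v,u}^w\neq0$, then $u(i)\le w(i)$ for all $i\le k$.

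\textbf{Step 2: the single case via symmetry.} Since $v$ has no descents at positions $1,\dots,k-1$, we have $\partial_i\mathfrak S_v=0$ for $i<k$. Thus $\mathfrak S_v$ is symmetric in $x_1,\dots,x_k$, and multiplication by $\mathfrak S_v$ commutes with $\partial_1,\dots,\partial_{k-1}$. I would then invoke the Bergeron--Sottile principle: multiplying $\mathfrak S_u$ by a polynomial symmetric in $x_1,\dots,x_k$ gives an expansion in Schubert polynomials $\mathfrak S_w$ with $u\le_k w$ in the $k$-Bruhat order. For separated descents this is precisely the setting where the Knutson--Zinn-Justin puzzle rule cited in the paper applies, and one could alternatively read the statement off the puzzles. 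In either formulation, the $k$-Bruhat order comes from covers $x\lessdot_k xs_{ab}$ with $a\le k<b$ and $\ell(xs_{ab})=\ell(x)+1$. Each such cover strictly increases the value at position $a\le k$, since the length condition forces $x(a)<x(b)$, and leaves the other positions $\le k$ unchanged. Iterating along a saturated $k$-chain from $u$ to $w$ gives $u(i)\le w(i)$ for all $i\le k$.

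\textbf{Main obstacle.} The hard step is justifying $u\le_k w$ when $v$ is not a polynomial in $x_1,\dots,x_k$ alone. Here $\mathfrak S_v$ is only symmetric in the first $k$ variables, and it is $u$, not $v$, that lives in $\mathbb Z[x_1,\dots,x_k]$. My first attempt would be to write $\mathfrak S_v$ as a combination of products $s_\lambda(x_1,\dots,x_k)\cdot g$, where $g$ involves only the variables $x_{k+1},x_{k+2},\dots$, for instance via the coproduct or Cauchy-type factorization of Schubert polynomials at position $k$. The factor $s_\lambda(x_1,\dots,x_k)$ then moves $u$ upward in $\le_k$ by the Bergeron--Sottile / Pieri rules of Section \ref{section3}. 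I would then need to check that multiplication by $g$ does not decrease the values in the first $k$ positions. This should follow from the Monk rule in the variables $x_j$ with $j>k$, since those transpositions only raise entries at positions $>k$ against smaller ones. If this decomposition becomes messy, the fallback is to verify the inequality directly on separated-descent puzzles, where the labels on the boundary segment of length $k$ read off $u[1..k]$ and $w[1..k]$.
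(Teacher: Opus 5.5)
The argument you end up with (Step 1 plus the decomposition in your last paragraph) is correct. However, the ``Bergeron--Sottile principle'' in Step 2 has to be discarded, not justified. When $v$ has descents beyond $k$, the expansion of $\mathfrak S_u\mathfrak S_v$ need not be supported on $\{w: u\le_k w\}$.

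For instance, take $k=2$, $u=s_1=2134$ and $v=s_3=1243$. Then $\mathfrak S_v=x_1+x_2+x_3$ is symmetric in $x_1,x_2$, and $\mathfrak S_u\mathfrak S_v=\mathfrak S_{2143}$. Every $2$-Bruhat cover strictly raises an entry in positions $1,2$, and $2143$ agrees with $u$ there, so $u\not\le_2 2143$.

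What survives is exactly the positional inequality, and that is what your decomposition $\mathfrak S_v=\sum_\lambda s_\lambda(x_1,\dots,x_k)\,g_\lambda(x_{k+1},x_{k+2},\dots)$ proves. Bergeron--Sottile handles the $s_\lambda$ factor. For the $g_\lambda$ factor you should state Monk's rule precisely:
\[
x_j\,\mathfrak S_z=\sum_{b>j}\mathfrak S_{zs_{jb}}-\sum_{a<j}\mathfrak S_{zs_{aj}}\qquad(j>k,\ \text{length-increasing terms only}).
\]
Your one-line justification misdescribes the negative terms with $a\le k$: they do change a position $\le k$. They raise $z(a)$ to $z(j)>z(a)$, so the conclusion still holds, and the signs are harmless because you only need a support inclusion. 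Using $x_1+\cdots+x_m=\mathfrak S_{s_m}$ for $m>k$ instead of $x_j$ makes this step sign-free.

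The paper's proof runs on the same idea but skips your Step 1. It writes $\mathfrak S_v(x;y)$ as a $\mathbb Q[y]$-polynomial in $e_r(x_1,\dots,x_k)$ for $1\le r\le k$ and $x_1+\cdots+x_m$ for $m>k$. It then reads the allowed moves on $\mathfrak S_z(x;t)$ directly off the triple Pieri rule at $y=0$: decreasing $k$-paths, respectively at most one $m$-Bruhat cover. None of these moves lowers an entry in positions $1,\dots,k$.

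Your reduction to single supports is valid, including the check $\operatorname{des}(v_2)\subseteq\operatorname{des}(v)$. It lets you quote only classical non-equivariant facts. The price is that the lemma then rests on the ``$\subseteq$'' half of Theorem \ref{thm:i->3}, hence on the Gao--Xiong Graham positivity of $c_{u,v}^w(t;y)$, which the paper's direct route does not need. Neither argument uses the hypothesis on $u$, and your puzzle fallback is unnecessary.
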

\begin{proof}
We now consider the expansion
\[
\mathfrak{S}_v(x;y)\cdot \mathfrak{S}_{u}(x;t)
=\sum_{w\in S_\infty} c_{v,u}^w(t;y)
\cdot\mathfrak{S}_{w}(x;t). 
\]
Since $\min(\operatorname{des}(v))\ge k$, 
$\mathfrak{S}_v(x;y)$ is symmetric in $x_1,\ldots,x_k$. Hence it belongs to the $\mathbb{Q}[y]$-algebra generated by 
\begin{equation}\label{eq:itpieri}
e_1(x_1,\ldots,x_k),\ldots,
e_k(x_1,\ldots,x_k),\quad x_1+\cdots+x_{k+1}, \quad x_1+\cdots+x_{k+2},
\ldots.
\end{equation}
We claim that in the expansion of any Schubert polynomial $\mathfrak{S}_z(x;t)$ multiplied with any generator in \eqref{eq:itpieri}, the appearing $\mathfrak{S}_w(x;t)$ must satisfy: $z(i)\le w(i)$ for $i\le k$.

For  $1\le r\le k$, we have 
$\mathfrak{S}_{\delta[r,k]}(x;0)=e_r(x_{1},\ldots,x_{k}).$ Let $y=0$ in  the triple Pieri rule Theorem \ref{triplepieri}, we find that if $\mathfrak{S}_w(x;t)$ appears with nonzero coefficient in 
$\mathfrak{S}_z(x;t)\cdot e_r(x_{1},\ldots,x_{k})$,
then there is a decreasing $k$-path,
\[
    z=z_0\lessdot_k z_1\lessdot_k \cdots \lessdot_k z_s=w.
\]
At each step, we have $z_{j+1}=z_js_{ab}$, for some $a\le k<b$.  Because $z_{j}\lessdot z_{j+1}$, we have $z_{j}(a)<z_{j}(b)=z_{j+1}(a)$, while all other entries among the first $k$ positions remain unchanged.

Similarly, for any $m>k$, we have $\mathfrak{S}_{\delta[1,m]}(x;0)=e_1(x_{1},\ldots,x_{m})=x_{1}+\cdots+x_{m}$. The triple Pieri rule Theorem \ref{triplepieri} implies that the appearing  $\mathfrak{S}_w(x;t)$ in the expansion  $\mathfrak{S}_z(x;t)\cdot (x_1+\cdots+x_m)$  are  those $w$ such that there are decreasing $m$-paths 
\[
z=z_0
\lessdot_m z_1
\lessdot_m\cdots
\lessdot_m z_s=w,
\]
with each step has the form
$$z_{j+1}=z_js_{ab},\quad a\le m<b.$$
If $a> k$, then the first $k$ entries are unchanged. 
If $a\leq k$, then $z_{j+1}(a)=z_j(b)>z_j(a)$, and the remaining entries in the first $k$ positions are unchanged.   

It is easy to see that in each case $z(i)\le w(i)$ for $1\le i\le k$. Now express $\mathfrak{S}_v(x;y)$ as a polynomial in the generators in \eqref{eq:itpieri}. Starting with $\mathfrak{S}_{u}(x;t)$ and multiplying successively by these generators, we can eventually conclude that
$u(i)\le w(i)$ for $1\le i\le k$.
\end{proof}

Therefore, by Lemma \ref{lemma4.1}, we have  the following inclusions
\begin{align*}
\supp^2(u,v)&\subseteq \supp^3(u,v)\cap  \supp^3(v,u)\\
&\subseteq \supp^3(u,v)\cap\{w\in S_\infty: u(i)\leq w(i)\text{ for  }1\leq i\leq k\}.
\end{align*}
To prove Theorem \ref{th:4.1}, it suffices to show the following theorem.

\begin{theorem}\label{th:puzzle}
For $u,v$ with $\max(\operatorname{des}(u))\leq k\le\min(\operatorname{des}(v))$, we have
$$\supp^2(u,v)=\supp^3(u,v)\cap \{w\in S_\infty: u(i)\leq w(i),\text{ for  }1\leq i\leq k\}.$$
\end{theorem}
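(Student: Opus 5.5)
The inclusion ``$\subseteq$'' has already been obtained: combine \eqref{eq:1<2<3}, the symmetry $c_{u,v}^w(t)=c_{v,u}^w(t)$, and Lemma \ref{lemma4.1}. So I only need the reverse inclusion. Let $w\in\supp^3(u,v)$ with $u(i)\le w(i)$ for $1\le i\le k$. I must show $c_{u,v}^w(t;t)\neq 0$.

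I will use the separated-descent puzzle rule for triple coefficients of Fan--Guo--Xiong \cite{FGXpuzzle} (equivalently Samuel \cite{Samuel}). In that rule, $c_{u,v}^w(t;y)$ is a sum over puzzles with boundary $(u,v,w)$. Each puzzle has weight $\prod(t_i-y_j)$ over its equivariant pieces, where the pair $(i,j)$ is read off from the position of the piece. Specializing $y=t$ gives the Knutson--Zinn-Justin double puzzle rule \cite{Pzz3}, in which each equivariant piece contributes $t_i-t_j$. The factor vanishes exactly when $i=j$, and by Graham positivity it is otherwise a nonnegative combination of roots. So $c_{u,v}^w(t;t)\neq0$ holds as soon as some puzzle with boundary $(u,v,w)$ has all its equivariant pieces at positions with $i\neq j$; in the double rule these are the pieces lying strictly on the correct side of the diagonal. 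Since $c_{u,v}^w(t;y)\ne 0$, there is at least one puzzle, and the task is to produce one whose weight survives the specialization.

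The key step is to understand the specialization through the first $k$ entries. In the separated-descent setting, the $y$-variables attached to $u$ correspond to the first $k$ positions (since $u$ has descents only up to $k$), and an equivariant piece carrying the factor $t_{w(i)}-y_{j}$ degenerates to zero exactly when a label of $u$ would have to move to the left of its target. The condition $u(i)\le w(i)$ for $i\le k$ says precisely that no label of $u$ in the first $k$ positions is forced leftward. I would therefore try to construct a ``greedy'' puzzle from any valid triple puzzle. The plan is to slide each equivariant piece along its path so that it occurs at the latest allowed position, and to prove, by induction on $i=1,\dots,k$ using $u(i)\le w(i)$, that every resulting factor has $i\ne j$.

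An alternative route that avoids puzzles, and which I would try if the puzzle bookkeeping gets messy, uses the Pieri-type generation from the proof of Lemma \ref{lemma4.1}. In the separated case $\mathfrak S_u$ is a polynomial in $x_1,\dots,x_k$ only. One could then expand $\mathfrak S_u(x;y)$ via a Cauchy/Kostka-type formula into products of $e_r(x_1,\dots,x_k)$-type classes. Applying the triple Pieri rule (Theorem \ref{triplepieri}) repeatedly and tracking the $\Delta_k$ sets, one reduces nonvanishing at $y=t$ to Bruhat comparisons of Grassmannian permutations, as in the proof of Theorem \ref{th:piericase}. Those comparisons should follow from the Gale-order inequalities $\{u(1),\dots,u(i)\}\le\{w(1),\dots,w(i)\}$, which are implied by $u(i)\le w(i)$.

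I expect the main obstacle to be the combinatorial core: showing that the pointwise inequality $u(i)\le w(i)$ for $i\le k$ suffices to find a single term (a puzzle, or a chain of Pieri steps) that does not vanish at $y=t$. Cancellation is not an issue, by positivity. The difficulty is existence: the triple nonvanishing may be witnessed only by terms that die at $y=t$, and one must modify them. I would handle this first in the Grassmannian subcase $u,v$ both $k$-Grassmannian, where $u(i)\le w(i)$ is exactly $u\le w$ and the argument parallels \cite{ARY}. I would then extend the argument to general separated descents.
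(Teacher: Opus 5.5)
You have the right reduction, and it matches the paper's. The inclusion ``$\subseteq$'' follows from \eqref{eq:1<2<3}, the symmetry $c_{u,v}^w(t)=c_{v,u}^w(t)$ and Lemma~\ref{lemma4.1}. For ``$\supseteq$'' it suffices to exhibit, in the Fan--Guo--Xiong pipe puzzle model (Theorem~\ref{HHUU}), one puzzle in $\PP(u,v,w)$ with no empty tile on the diagonal.

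That existence statement, however, is the entire content of the theorem, and you do not prove it; you say so yourself. The mechanism you sketch (``slide each equivariant piece to the latest allowed position, by induction on $i=1,\dots,k$'') has three problems:
\begin{itemize}
\item It names no concrete moves.
\item It gives no reason the result is still a valid puzzle with the same boundary: no double crossings, and the label condition on crossing tiles.
\item It is set up on the wrong indices. The weight of an empty tile at $(i,j)$ is $t_j-y_i$, which depends only on its position, not $t_{w(i)}-y_j$. So what must be arranged is that none of the $n$ diagonal cells $(i,i)$ is empty. The hypothesis on the first $k$ entries does not enter through an induction over $i\le k$.
\end{itemize}
The Pieri fallback does not close the gap either. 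In Lemma~\ref{lemma4.1}, Pieri generation is used only to bound the support from above, where cancellation is harmless. Writing $\mathfrak{S}_u(x;y)$ in terms of elementary polynomials in $x_1,\dots,x_j$ generally needs signed coefficients; already $\mathfrak{S}_{1423}=x_1^2+x_1x_2+x_2^2$ is not a nonnegative combination of products of $x_1$, $x_1+x_2$, $x_1x_2$. So a surviving Pieri chain does not certify $c_{u,v}^w(t)\neq 0$.

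The paper's proof rests on two ideas that are missing from your proposal.

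\textbf{First idea: each pipe crosses its own row.} The hypothesis is used to show that the pipe exiting through column $i$ of the bottom boundary crosses row $i$. Its label $j=w^{-1}(i)$ either exceeds $k$, in which case it enters from the top, or satisfies $j\le k$, in which case it enters from the right in row $u(j)\le w(j)=i$. Since pipes move only left and down, it meets row $i$.

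\textbf{Second idea: reroute at the first empty diagonal cell.} Take a puzzle and the smallest $i$ with $(i,i)$ empty.
\begin{itemize}
\item Minimality forces pipes $1,\dots,i-1$ to run straight up their columns to the diagonal. Hence below $(i,i)$, column $i$ carries only pipe $i$.
\item Pipe $i$ leaves column $i$ at some tile $A$ below row $i$. By the first idea, it reaches row $i$ at some tile $B$ to the right.
\item Consider the region bounded by this stretch of pipe $i$ and by the segments of column $i$ and row $i$ through $(i,i)$. Every other pipe in this region runs from bottom to top. Erase the stretch, droop those pipes, and redraw pipe $i$ straight up column $i$ to $(i,i)$ and then along row $i$ to $B$.
\end{itemize}
This fills $(i,i)$ without touching $(i_0,i_0)$ for $i_0<i$. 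So the smallest empty diagonal index strictly increases, and iterating yields a puzzle whose weight survives the specialization $y=t$.
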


To prove Theorem \ref{th:puzzle}, we utilize the pipe puzzle model for $c_{u,v}^w(t;y)$ given in \cite[Theorem 2.2]{FGXpuzzle}.

Consider an $n\times n$ grid with boundaries  labeled  as follows.  
\begin{align}\label{eq:boarduvw}
\BPD[1.5pc]{
\M{}\M{\theta_{v}^1}\M{\theta_v^2}\M{\cdots}\M{\cdots}\M{\theta_v^n}\\
\M{0}\O\O\M{\cdots}\M{\cdots}\O\M{\kappa_{u}^1}\\
\M{0}\O\O\M{\cdots}\M{\cdots}\O\M{\kappa_{u}^2}\\
\M{\vdots}\M{\vdots}\M{\vdots}\M{\ddots}\M{\ddots}\M{\vdots}\M{\vdots}\\
\M{\vdots}\M{\vdots}\M{\vdots}\M{\ddots}\M{\ddots}\M{\vdots}\M{\vdots}\\
\M{0}\O\O\M{\cdots}\M{\cdots}\O\M{\kappa_{u}^n}\\
\M{}\M{\eta_w^1}\M{\eta_w^2}\M{\cdots}\M{\cdots}\M{\eta_w^n}
}
\qquad 
\begin{array}{r@{\,}l}
\kappa_u^i &=\begin{cases}
    u^{-1}(i), & u^{-1}(i) \leq k,\\[5pt]
    0, & u^{-1}(i)>k.
    \end{cases}\\[4ex]
\theta_v^i &=\begin{cases}
    0, & v^{-1}(i)\leq k,\\[5pt]
    v^{-1}(i), & v^{-1}(i) >k.
    \end{cases}\\[4ex]
\eta_w^i & = w^{-1}(i).
\end{array}
\end{align}

Since $\max(des(u))\le k\le \min(des(v))$,   the nonzero labels on 
the right boundary are $1, \ldots, k$, and   the nonzero labels on the top boundary are $k+1,\ldots, n$. It is easy to see that  we can reconstruct $u,v,w$ from the boundary labelings.  We omit the labels 0 on the boundaries. 

Now tile the $n\times n$ grid  with  boundaries labeled as in \eqref{eq:boarduvw} with the following  admissible tiles   
\begin{align}\label{eq:Schtile}
\begin{array}{cccccc}
\BPD[1.5pc]{\O}&
\BPD[1.5pc]{\X}&
\BPD[1.5pc]{\F}&
\BPD[1.5pc]{\J}&
\BPD[1.5pc]{\I}&
\BPD[1.5pc]{\H}\\
\end{array}
\end{align}
The curves drawn  on the tiles are referred to as \emph{pipes}. 
A tiling of \eqref{eq:boarduvw} built upon the tiles in \eqref{eq:Schtile} is a network of pipes such that 
\begin{enumerate}
    \item There are a total of $n$ pipes, among which $k$ pipes   enter horizontally from rows   on the right boundary labeled $1,\ldots, k$, and $n-k$ pipes enter vertically  from   columns  on the top boundary labeled $k+1,\ldots, n$. 
    The  pipes inherit the labels of the corresponding rows and columns. 

    \item The $n$ pipes exit vertically  through the bottom boundary.   No two pipes cross twice or more. For $1\le i\le n$, we call the pipe exiting through the $i$-th column of the bottom boundary the $i$-th pipe. 
\end{enumerate}

A \emph{Schubert pipe puzzle} for $u,v,w$ is a tiling of \eqref{eq:boarduvw} with the tiles in \eqref{eq:Schtile}, subject to the following restriction on the $\BPD{\X}$  tiles: The label of the horizontal pipe in $\BPD{\X}$ must be smaller than the label of the vertical pipe.
For example, 
    $$
    \BPD{\M{3}\M{}\M{}\\
    \I\O\O\\
    \I\O\F\M{2}\\
    \I\F\X\M{1}\\
    \M{3}\M{1}\M{2}}\text{ is allowed, while }
    \BPD{\M{3}\M{}\M{}\\
    \I\O\O\\
    \I\O\F\M{1}\\
    \I\F\BPDfr{\color{red}\XX}\M{2}\\
    \M{3}\M{2}\M{1}}\text{ is not allowed.}
$$
Denote by $\PP(u,v,w)$ the set of Schubert pipe puzzles for $u,v,w$. 
For each $\pi \in \PP(u,v,w)$, define its \emph{weight} by
$$\wt(\pi; t, y)=\prod_{(i,j)} (t_j-y_i),$$
where the sum is over empty tiles $\BPD{\O}$ at  $(i,j)$-position (in the matrix coordinate).  If we set $y_i=t_i$, then a pipe puzzle has weight zero if and only if there exists at least one empty tile $\BPD{\O}$ on the diagonal. 

\begin{theorem}[\cite{FGXpuzzle}]\label{HHUU}
Let  $u,v \in S_\infty$ be permutations with separated descents at position $k$.
For $w\in S_\infty$, we have
\begin{equation*} 
    c_{u, v}^w(t;y) = \sum_{\pi\in \PP(u,v,w)} \wt(\pi; t,y).
\end{equation*} 
\end{theorem}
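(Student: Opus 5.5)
The plan is to characterize the triple coefficients $c_{u,v}^w(t;y)$, for fixed $v$ with $\min(\operatorname{des}(v))\ge k$, by a downward recursion in $u$ through the left Demazure operators \eqref{leftoperator}. We then show that the puzzle generating function $P_{u,v}^w(t;y):=\sum_{\pi\in\PP(u,v,w)}\wt(\pi;t,y)$ obeys the same recursion and the same initial values.

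The set $\{u\in S_n:\max(\operatorname{des}(u))\le k\}$ consists of the minimal coset representatives for $S_n/(S_1^{k}\times S_{n-k})$. It is a lower ideal in the left weak order, with unique maximum $u_{\max}=n(n-1)\cdots(n-k+1)\,1\,2\cdots(n-k)$. So every such $u$ is reached from $u_{\max}$ by a chain $u_{\max}=u^{(0)}\to u^{(1)}=s_{i_1}u^{(0)}\to\cdots$ of length-decreasing left multiplications that stays inside the set.

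Applying $\varpi_i$ to \eqref{eq:triplecoeff}, exactly as in the proof of Theorem \ref{thm:i->3}, gives $\varpi_i c_{u,v}^w(t;y)=c_{s_iu,v}^w(t;y)$ whenever $\ell(s_iu)<\ell(u)$. Hence it suffices to prove two things:
\begin{enumerate}
\item $P_{u_{\max},v}^w=c_{u_{\max},v}^w$;
\item $\varpi_iP_{u,v}^w=P_{s_iu,v}^w$ whenever $s_iu<u$ inside the set.
\end{enumerate}

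For the base case, $\mathfrak{S}_{u_{\max}}(x;y)=\prod_{i\le k}\prod_{j\le n-i}(x_i-y_j)$ factors completely. The right boundary labels of the grid are then $k,k-1,\dots,1$ in rows $1,\dots,k$ and $0$ below. I would compute $c_{u_{\max},v}^w$ as iterated products of linear factors $x_i-y_j$, using the triple Pieri rule (Theorem \ref{triplepieri} with $r=1$) row by row. In the puzzle, the same row-by-row structure appears: the pipe entering row $i$ travels left, producing empty tiles with weights $t_j-y_i$, and then turns down at a column allowed by the $\BPD{\X}$ rule. Matching one row of the puzzle with one Pieri multiplication by $x_i-y_j$-type factors should give the base identity. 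Alternatively, one can induct on the number of rows and treat each row as a transfer operator.

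For the recursion, note that $y_i$ enters the weight only through row $i$, and $s_iu$ is obtained from $u$ by swapping the right boundary labels $\kappa_u^i,\kappa_u^{i+1}$. The step is therefore a Yang--Baxter ("train") argument.
\begin{enumerate}
\item Attach an auxiliary crossing vertex $R_i(y_i,y_{i+1})$ to the right of rows $i,i+1$. Its weights are chosen so that its action on the pair of labels realizes $-(1-s_i^{y})/(y_i-y_{i+1})$: identity-type weight $1$ or $0$ on swapped or unswapped labels, depending on whether the labels are ordered.
\item Verify the local Yang--Baxter relation between $R_i$ and a column of two puzzle tiles with row parameters $y_i,y_{i+1}$ and column parameter $t_j$.
\item Push $R_i$ leftward across all $n$ columns.
\item On the left boundary all labels are $0$, so $R_i$ acts there by the scalar giving $0$ (since $\varpi_i$ of a symmetric function is $0$). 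Equating the two sides then yields the relation in (2), with the vanishing case $s_iu>u$ handled similarly.
\end{enumerate}

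The main obstacle is step 2: finding $R_i$ and checking the Yang--Baxter equation with the asymmetric $\BPD{\X}$ constraint, which says the horizontal label must be less than the vertical one. This is a finite case check over label orderings, and I would first verify it with generic labels $a<b<c$ and with zeros. Along the way I must confirm that the row of the puzzle carrying $0$ labels behaves compatibly, since this is exactly where the separated-descents hypothesis enters: nonzero right labels are $\le k$ and nonzero top labels are $>k$.
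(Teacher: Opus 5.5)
The paper does not prove this statement; it quotes it from \cite{FGXpuzzle}. So I am judging your argument on its own. Your reduction is sound: $\{u:\max\operatorname{des}(u)\le k\}$ is a lower ideal in left weak order with top element $u_{\max}$, and \eqref{leftoperator} gives $\varpi_i c^w_{u,v}=c^w_{s_iu,v}$ for $s_iu<u$. So (1) and (2) suffice, and the vanishing case is not even needed.

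The genuine gap is the base case (1). It carries the real content, and it is where $v(1)<\cdots<v(k)$ is used, since only then does the top boundary encode $v$. Your sketch of it does not work as written.

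\begin{itemize}
\item For $u_{\max}$ the nonzero right labels sit in the bottom rows: $\kappa^{n-j+1}_{u_{\max}}=j$ for $j\le k$. Hence rows $1,\dots,n-k$ carry only pipes from the top, not the configuration you describe.
\item Theorem \ref{triplepieri} with $r=1$ multiplies by $\mathfrak{S}_{s_m}(x;y)=\sum_{i\le m}(x_i-y_i)$, not by a single factor $x_i-y_j$. Multiplying by one linear factor is the equivariant Monk rule, which has negative terms, so no positive row-by-row matching comes out of it.
\end{itemize}

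A version that could work is to regroup
$\mathfrak{S}_{u_{\max}}(x;y)=\prod_{j=1}^{n-1}\prod_{i=1}^{m_j}(x_i-y_j)$, with $m_j=\min(k,n-j)$. Each inner product is $\mathfrak{S}_{\delta[m_j,m_j]}(x;y)=\prod_{i\le m_j}(x_i-y_1)$ with $y_1$ renamed $y_j$. Theorem \ref{triplepieri} with $r=k=m_j$ then expands it positively, with coefficients $\prod_{d\in\Delta_{m_j}(z,z')}(t_d-y_j)$ over decreasing $m_j$-paths $z\rightsquigarrow z'$.

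You would still have to prove a weight-preserving bijection between one such Pieri step and the fillings of puzzle row $j$. That bijection must match:
\begin{itemize}
\item intermediate permutations with the labels read off horizontal cuts;
\item $m_j$-Bruhat covers with the crossing-tile rule;
\item $\Delta_{m_j}$ with the empty tiles;
\item and it must account for the right pipe $n-j+1$ entering in row $j$ when $j>n-k$.
\end{itemize}
None of this is in the proposal. As it stands, you reduce the theorem to its base case without proving it.

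For (2), the train argument does go through, but your description of $R$ and of the boundary is wrong. Take $R$ with weight $1$ on the straight-through configuration. When the labels $a,b$ entering rows $i,i+1$ satisfy $a>b$ (counting $0$ as $+\infty$), give the swapped configuration weight $y_i-y_{i+1}$; otherwise the swap has weight $0$.

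At the left boundary $R$ meets $(0,0)$ and contributes $1$, not $0$. Pushing $R$ across all columns gives
$P^w_{u,v}|_{y_i\leftrightarrow y_{i+1}}=P^w_{u,v}+(y_i-y_{i+1})P^w_{s_iu,v}$ for $s_iu<u$, which is exactly $\varpi_iP^w_{u,v}=P^w_{s_iu,v}$.

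The one-column relation can be checked over all label configurations (at most one pipe on each of the three input edges). The only nontrivial identity needed is $(t_j-y_i)+(y_i-y_{i+1})=t_j-y_{i+1}$. The separated-descents hypothesis plays no role in this step.
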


\begin{example}
For our running example $u=312$ and $v=231$, we can take $k=1$ such that
$$\max(\operatorname{des}(u))=1\leq k\leq \min(\operatorname{des}(v))=2.$$
Label the top boundary by $v^{-1}=3{\color{gray}1}24$ and label the right boundary by $u^{-1}={\color{gray}23}1{\color{gray}4}$.  We have the following pipe puzzles, which implies $\supp^3(u,v)=\{231,321,4213\}$.
$$
\begin{matrix}
 \BPD{\M{3}\M{}\M{2}\M{4}\\\I\O\I\I\\\I\O\I\I\\\I\F\X\X\M{1}\\\I\I\I\I\\\M{3}\M{1}\M{2}\M{4}}
 &
 \BPD{\M{3}\M{}\M{2}\M{4}\\\I\F\J\I\\\I\I\O\I\\\I\I\F\X\M{1}\\\I\I\I\I\\\M{3}\M{2}\M{1}\M{4}}
\BPD{\M{3}\M{}\M{2}\M{4}\\\I\O\I\I\\\I\F\J\I\\\I\I\F\X\M{1}\\\I\I\I\I\\\M{3}\M{2}\M{1}\M{4}}
 
 &
 \BPD{\M{3}\M{}\M{2}\M{4}\\\I\F\J\I\\\I\I\F\J\\\I\I\I\F\M{1}\\\I\I\I\I\\\M{3}\M{2}\M{4}\M{1}}\\
c_{u,v}^{2314}(t;y)=(t_2-y_1)(t_2-y_2)
&
c_{u,v}^{3214}(t;y)=(t_3-y_2)+(t_2-y_1)
&
c_{u,v}^{4213}(t;y)=1
\end{matrix}
$$
If we use $k=2$, we get the same result.
$$
\begin{matrix}
 \BPD{\M{3}\M{}\M{}\M{4}\\\I\O\F\X\M{2}\\\I\O\I\I\\\I\F\X\X\M{1}\\\I\I\I\I\\\M{3}\M{1}\M{2}\M{4}}
 &
 \BPD{\M{3}\M{}\M{}\M{4}\\\I\F\H\X\M{2}\\\I\I\O\I\\\I\I\F\X\M{1}\\\I\I\I\I\\\M{3}\M{2}\M{1}\M{4}}
\BPD{\M{3}\M{}\M{}\M{4}\\\I\O\F\X\M{2}\\\I\F\J\I\\\I\I\F\X\M{1}\\\I\I\I\I\\\M{3}\M{2}\M{1}\M{4}}
 
 &
 \BPD{\M{3}\M{}\M{}\M{4}\\\I\F\H\X\M{2}\\\I\I\F\J\\\I\I\I\F\M{1}\\\I\I\I\I\\\M{3}\M{2}\M{4}\M{1}}\\
c_{u,v}^{2314}(t;y)=(t_2-y_1)(t_2-y_2)
&
c_{u,v}^{3214}(t;y)=(t_3-y_2)+(t_2-y_1)
&
c_{u,v}^{4213}(t;y)=1
\end{matrix}
$$
This agrees with the blue circle in Example \ref{eg:runningeg}. 
\end{example}

\bigbreak
\begin{proof}[Proof of Theorem \ref{th:puzzle}]
We only need to show the inclusion "$\supseteq$". That is, suppose that $w\in \supp^3(u,v)$ with $u(l)\leq w(l)$ for all $1\leq l\leq k$. 
We aim to show that there exists a pipe puzzle in $\operatorname{PP}(u,v,w)$  with no empty tile $\BPD{\O}$ on the diagonal.  

 We first claim that the $i$-th pipe of the pipe puzzle in $\operatorname{PP}(u,v,w)$ must pass through the $i$-th row. 
Let $j=w^{-1}(i)$. If $j>k$, then the $i$-th pipe (labeled by $j$) enters vertically from the top boundary in column $v(j)$. Obviously, the $i$-th pipe passes through the $i$-th row.
If $j\leq k$, then the $i$-th pipe enters horizontally from the right boundary in row $u(j)$.  By our assumption  $u(l)\le w(l)$ for all $l\le k$, since $j\le k$, we have $u(j)\leq w(j)=i$, and so the $i$-th pipe enters at row $u(j)\le i$. Thus the $i$-th pipe must pass through the $i$-th row.  The claim follows.

Now pick a pipe puzzle  $\pi\in\operatorname{PP}(u,v,w)$. Assume that $\pi$ has an empty tile $\BPD{\O}$ at position $(i,i)$ with $i$ smallest. It will be clear that the proof of the case $i=1$ is contained in the proof of the case $i>1$. Thus we assume $i>1.$  Since the first pipe must pass through the first row, and the $(1,1)$-position is not an empty tile $\BPD{\O}$, we see that the first pipe must go directly upward to the $(1,1)$-position. Similarly, the second pipe must go directly upward to the $(2,2)$-position. Thus we can conclude that for all $i_0<i$, the $i_0$-th pipe goes directly upward to the $(i_0,i_0)$-position.  
 
Therefore,  there are no $\BPD{\H}$ tiles, $\BPD{\J}$ tiles and $\BPD{\X}$ tiles directly below the $(i,i)$-position. The $i$-th pipe runs as follows from the bottom to the $i$-th row: 
$$
\def\o{\BPDfr{\put(0,0){\color{lightcyan}\rule{\unitlength}{\unitlength}}}}\BPD{
\M{}\M{}\M{}\M{}\M{}\M{}\M{B}\M{}\\
\M{}\o\F\H\X\J\F\H\J\M{\quad \cdots}\M{\quad i}\\
\M{}\O\I\O\I\O\I\\
\M{}\O\I\F\X\H\J\\
\M{}\O\I\I\\
\M{A}\F\X\J\\
\M{}\,\, \vdots\\[5pt]
\M{}\M{i}}
$$

Trace the $i$-th pipe upward from the bottom boundary of column $i$.
Let $A$ be the  tile where it leaves column $i$, and let $B$ be the  tile where it enters row $i$.
We consider the region bounded by the portion of the $i$-th pipe from $A$ to $B$, together with the corresponding segments of column $i$ and row $i$ meeting at the $(i,i)$-position.
All subsequent operations are performed inside this region. We aim to perform a "general droop" operation in this region as follows. Since no pipes enter from the left of this region and no pipes can cross twice or more, all pipes must enter this region from the bottom and leave at the top. 
\begin{itemize}
\item Erase the segment of the $i$-th pipe from tile $A$ to tile $B$. 
\item Apply the droop operation in this region. Since all pipes exit this region on the top, for each tile $\BPD{\F}$,  there is a $\BPD{\J}$ 
 tile  to  the right of it. We choose the first $\BPD{\O}$  in the same column below each $\BPD{\J}$ to perform the droop operation. Locally, this operation is shown as follows:    $$\BPD{\F\H\H\J\\\I\O\O\O}\rightarrow\BPD{\O\O\O\I\\\F\H\H\J}$$
 \item Draw horizontal and vertical pipes from $(i,i)$-position until they reach the tiles $A$ and $B$. 
\end{itemize}

For example, in the right pipe puzzle below, the tile at $(i,i)$ and all tiles directly below it contain no $\BPD{\O}$.   
$$\def\o{\BPDfr{\put(0,0){\color{lightcyan}\rule{\unitlength}{\unitlength}}}}\BPD{
\M{}\M{}\M{}\M{}\M{}\M{}\M{B}\M{}\\
\M{}\o\F\H\X\J\F\H\J\M{\quad \cdots}\M{\quad i}\\
\M{}\O\I\O\I\O\I\\
\M{}\O\I\F\X\H\J\\
\M{}\O\I\I\\
\M{A}\F\X\J\\
\M{}\,\, \vdots\\[5pt]
\M{}\M{i}}
\Longrightarrow\qquad
\BPD{
\M{}\M{}\M{}\M{}\M{}\M{}\M{B}\M{}\\
\M{}\F\H\H\X\X\H\H\J\M{\quad \cdots}\M{\quad i}\\
\M{}\I\F\H\X\J\O\\
\M{}\I\I\O\I\O\O\\
\M{}\I\I\O\\
\M{A}\I\I\O\\
\M{}\,\, \vdots\\[5pt]
\M{}\M{i}}
$$
Applying this operation recursively. Since the $(n,n)$-position can not be an empty tile $\BPD{\O}$, we can obtain a pipe puzzle with no $\BPD{\O}$ on the diagonal. Thus $w\in \supp^2(u,v)$. 
\end{proof}

\section{Inverse Grassmannians}\label{section5}

In this section, we consider the case $u$ and $v$ are inverse Grassmannians; that is,  both  $u^{-1}$ and $v^{-1}$ have only
one descent.  
  
\begin{theorem}\label{thm:inverse grass cases}
Let $u,v$ be two inverse Grassmannian permutations. Then we have
\[
\supp^2(u,v)=\supp^3(u,v)\cap \supp^3(v,u).
\]
\end{theorem}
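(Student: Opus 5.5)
By Proposition \ref{prop:property} and the symmetry $c_{u,v}^w(t)=c_{v,u}^w(t)$, we already have $\supp^2(u,v)\subseteq\supp^3(u,v)\cap\supp^3(v,u)$. So only ``$\supseteq$'' needs proof. As in Sections \ref{section3} and \ref{section4}, the plan is to use the explicit combinatorial formulas of \cite{S-orbit} for the double and triple Schubert coefficients of inverse Grassmannian permutations. We then show that any $w$ in both triple supports admits an object whose weight does not vanish under the specialization $y=t$.

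\textbf{Step 1: the model.} Write $u^{-1}$ with its unique descent at $p$ and $v^{-1}$ with its unique descent at $q$. Then $u$ (resp.\ $v$) is determined by a $p$-subset (resp.\ $q$-subset) of $[n]$, namely the values placed first by $u^{-1}$. In \cite{S-orbit}, $c_{u,v}^w(t;y)$ is expressed as a positive sum over combinatorial objects (tableau- or clan-type fillings governed by $w$). Each object has weight a product of factors $(t_j-y_i)$, or after specialization $(t_j-t_i)$. A specialized weight is nonzero if and only if no factor has $i=j$. We set up the dictionary so that $w\in\supp^3(u,v)$ iff some object exists, and $w\in\supp^2(u,v)$ iff some object avoids all ``diagonal'' factors.

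\textbf{Step 2: translate $\supp^3(v,u)$ into a Bruhat-type condition.} Lemma \ref{fangda1} gives $u\le w$ for $w\in\supp^3(v,u)$. Since $u$ is inverse Grassmannian, $u\le w$ reduces to a Gale-order condition on a single family of subsets, analogous to Lemma \ref{lemma4.1}. If the condition from Lemma \ref{fangda1} proves too weak, we refine it by running the argument of Lemma \ref{lemma4.1} on the left: $\mathfrak{S}_v(x;y)$ is symmetric in suitable blocks of $y$-variables after passing to inverses. Alternatively, we compute $\supp^3(v,u)$ directly from the same model with roles swapped, via Theorem \ref{thm:i->3}, as a union of $\supp^1(v_2,u)$ over reduced factorizations.

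\textbf{Step 3 (main obstacle): modify an object.} Given an object for $(u,v,w)$ with some diagonal factors, we will remove them one at a time, as in the droop argument of Theorem \ref{th:puzzle}. Take the smallest diagonal defect and locally move the offending entry or pipe, using the inequality from Step 2 to guarantee there is room. We must check that the modification preserves admissibility and does not create earlier defects, so that induction on the position of the first defect terminates. The hardest part is finding the right local move and verifying that the Step 2 inequality suffices in every configuration.

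If the model admits it, a cleaner route is to first verify the formula on the Grassmannian/separated-descents overlap using Theorem \ref{th:4.1}. Alternatively, one could reduce via the $\varpi_k$ recursion \eqref{leftoperator}, which is the approach the remark after Conjecture \ref{conjb} suggests: a direct check using the formulas of \cite[\S 7.2]{S-orbit}.
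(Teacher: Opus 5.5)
Your opening inclusion $\supp^2(u,v)\subseteq\supp^3(u,v)\cap\supp^3(v,u)$ is fine, but the reverse inclusion is never proved. Step 3, where it would happen, is left as a programme whose key move you yourself flag as unresolved. It also rests on a Step 1 model that \cite{S-orbit} does not provide. Theorems~\ref{th:triplecoeffi} and~\ref{thm:maindouble} do not write $c_{u,v}^w(t;y)$ as a sum of fillings weighted by products of factors $(t_j-y_i)$. The triple coefficient is a single affine Stanley function $\widetilde{F}_{f_{w*\gamma}}$, and the double coefficient is a single double Schubert polynomial $\mathfrak{S}_{\eta_{w*\gamma}}$ evaluated at disjoint sets of $t$-variables. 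So there is no object-by-object specialization $y=t$ with ``diagonal defects'' for a droop-type move to repair.

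Step 2 is also problematic. Replacing $\supp^3(v,u)$ by $\{w: u\le w\}$ via Lemma~\ref{fangda1} turns the claim into a potentially stronger one that you would then have to prove. The suggested refinement via symmetry of $\mathfrak{S}_v(x;y)$ in blocks of $y$-variables cannot imitate Lemma~\ref{lemma4.1}, whose argument needs symmetry in the $x$-variables so that Pieri rules act on the $\mathfrak{S}_w(x;t)$-expansion. The fallback routes do not close the gap either. The $\varpi_k$ recursion only gives the easy monotonicity $\supp^3(s_ku,v)\subseteq\supp^3(u,v)$ for $\ell(s_ku)=\ell(u)-1$. Inverse Grassmannian pairs need not have separated descents, so Theorem~\ref{th:4.1} covers only a subcase.

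The missing idea is that in the preclan model all three supports are read off from the same preclan $w*\gamma$, where $\gamma$ is the Richardson preclan with $u=u_\gamma$ and $v=v_\gamma$.
\begin{itemize}
\item By Theorem~\ref{th:triplecoeffi}, $w\in\supp^3(u,v)$ iff $\ell(w*\gamma)=\ell(w)+\ell(\gamma)$ and $v_{w*\gamma}=e$.
\item Apply the same theorem to the preclan obtained by interchanging all $+$ and $-$ signs. That preclan is still Richardson, has the same length, swaps the roles of $u_\gamma$ and $v_\gamma$, and commutes with the action of $w$. This gives $w\in\supp^3(v,u)$ iff $\ell(w*\gamma)=\ell(w)+\ell(\gamma)$ and $u_{w*\gamma}=e$.
\item By Theorem~\ref{thm:maindouble} (taking $p\ge q$ after possibly swapping $u$ and $v$), $w\in\supp^2(u,v)$ iff the same length condition holds and $w*\gamma$ is permutational.
\item Lemma~\ref{lem:permutational} says that being permutational is equivalent to $u_{w*\gamma}=v_{w*\gamma}=e$.
\end{itemize}
Intersecting the two triple criteria therefore gives exactly the double criterion. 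This is the ``direct check'' mentioned after Conjecture~\ref{conjb}, and no modification argument is needed.
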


We use the characterizations of the double and triple Schubert structure constants in \cite{S-orbit}. Let us start with  the descriptions of $\supp^3(u,v)$ and $\supp^3(v,u)$. 

\subsection{Preclans}\label{sec:preclans}
A \emph{preclan} is a partial matching whose unmatched nodes are either colored by $\clan{+}$ or $\clan{-}$, or left uncolored.
We say a preclan is a $(p,m,q)$-preclan if 
\begin{itemize}
    \item the number of $\clan{+}$'s and matchings is $p$; 
    \item the number of $\clan{-}$'s and matchings is $q$; 
    \item the number of uncolored nodes $\clan{,}$'s is $m$. 
\end{itemize}
We will always denote $n=p+m+q$. 
 
For example, the following diagram is a $(4,2,5)$-preclan 
\begin{equation}\label{eq:preclaneg}
\gamma=\clan{6-84,-..,+.}.
\end{equation}
We associate a $p$-inverse Grassmannian permutation $v_{\gamma}$ and a $q$-inverse Grassmannian $u_\gamma$ to $\gamma$ as follows.
\begin{itemize}
    \item 
To define $v_\gamma$,  first label the left-ends and $\clan{+}$'s of $\gamma$ from left to right with $1,2,\ldots,p$, and then label the other nodes by $p+1,\ldots,n$ from left to right. Then $v_\gamma$ is obtained by reading the labels of $\gamma$ from left to right.

\item Similarly, to define $u_\gamma$,  first label the left-ends and $\clan{-}$'s of $\gamma$ from left to right with $1,2,\ldots,q$, and then label the other nodes by $q+1,\ldots,n$ from left to right. Then $u_\gamma$ is obtained by reading the labels of $\gamma$ from left to right.
\end{itemize}
For example, let $\gamma$ be the preclan in \eqref{eq:preclaneg}. We have 
\begin{equation}\label{uvgamma}
\def\m#1{\makebox[1.2pc]{$#1$}}
\begin{aligned}
\gamma&=\,\clan{6-84,-..,+.}\\
v_\gamma &= \,\m{1}\m{5}\m{2}\m{3}\m{6}\m{7}\m{8}\m{9}\m{10}\m{4}\m{11}\\ 
u_\gamma &= \,\m{1}\m{2}\m{3}\m{4}\m{6}\m{5}\m{7}\m{8}\m{9}\m{10}\m{11}.\\
\end{aligned}
\end{equation}

A preclan $\gamma$ also determines a bounded affine permutation $f_{\gamma}\in \tilde{S}_{p+m+q}$, see  \cite[Definition 6.1]{S-orbit}. 
We use
$\widetilde{F}_{f_{\gamma}}$ to  denote the corresponding affine Stanley symmetric
function. Since the explicit expressions of these two objects are not needed here, we omit their detailed definitions. 

 \subsection{Weak order on preclan}
 
For a preclan $\gamma$, 
we define $s_k*\gamma$ to be the preclan obtained from $\gamma$ by the following 9 local moves on the $k$-th node and the $(k+1)$-st node. Otherwise, let $s_k*\gamma=\gamma$.

\def\mpto{\rotatebox[origin=c]{-90}{$\mapsto$}}
\begin{gather}\label{eq:clanweak1}
\begin{matrix}
\begin{matrix}
\clan{\dots+-\dots}\\
\clan{\dots-+\dots}
\end{matrix}&\,\,&
\clan{\dots\pm2\dots.}&\,\,&
\clan{2\dots.\pm\dots}\\[-1ex]
\mpto&&\qquad\quad\mpto\hfill&&\hfill\mpto\quad\qquad\\
\clan{\dots1.\dots}&&
\clan{\dots3\pm\dots.}&&
\clan{3\dots\pm.\dots}
\end{matrix}\\[1ex]\label{eq:clanweak2}
\begin{matrix}
\clan{2\dots.2\dots.}&
\clan{34\dots.\dots.}&
\clan{4\dots3\dots..}\\[-1ex]
\mpto&\quad\mpto\hfill&\hfill\mpto\quad\\
\clan{3\dots3.\dots.}&
\clan{52\dots.\dots.}&
\clan{5\dots2\dots..}
\end{matrix}\\[1ex]\label{eq:clanweak3}
\quad
\begin{matrix}
\clan{,\pm}&\qquad&
\clan{,2\dots.}& \quad&
\clan{3\dots,.}
\\
\mpto&& \mpto \qquad\quad && \qquad\mpto\\
\clan{\pm,}&& \clan{3,\dots.} && 
\clan{2\dots.,}
\end{matrix}
\end{gather}

The weak order on all $(p,m,q)$-preclans is generated by the relations $\gamma\leq s_k*\gamma$.
For a permutation $w\in S_n$ and a $(p,m,q)$-preclan $\gamma$, we define 
$$w*\gamma = s_{i_1}*\cdots *s_{i_\ell}*\gamma$$
for any reduced decomposition $w=s_{i_1}\cdots s_{i_\ell}$. In fact, $w*\gamma$ does not depend on the choices of  reduced decompositions of $w$. 

\begin{example}
When $p=q=m=1$, the Hasse diagram of the weak order of all $(1,1,1)$-preclans is displayed in the following.

$$\xymatrix@C=-1pc{
&& {\clan{1.,}}\\
{\clan{+-,}}\ar[urr]^1 &&
{\clan{2,.}}\ar[u]^2 && 
{\clan{-+,}}\ar[ull]_1\\
{\clan{+,-}}\ar[u]^2&&
{\clan{,1.}}\ar[u]^1&& 
{\clan{-,+}}\ar[u]_2\\
&{\clan{,+-}}\ar[ur]_2\ar[ul]^1 &&
{\clan{,-+}}\ar[ul]^2\ar[ur]_1
}$$
\end{example}

\begin{definition}\label{length}\cite[Definition 3.7]{S-orbit}
For a $(p,m,q)$-preclan $\gamma$, let $\{a_1<\cdots<a_m\}$ be the set of indices of uncolored nodes of $\gamma$. Define the \emph{length} of $\gamma$ to be 
\begin{equation}\label{eq:lenghts}
\ell(\gamma):= \ell(\bar{\gamma})+\sum_{i=1}^m(a_i-i),
\end{equation}
where $\bar{\gamma}$ is the $(p,0,q)$-preclan obtained from $\gamma$ by ignoring all the uncolored $\clan{,}$'s, and we define $$\ell(\bar{\gamma})=\sum_{(i,j)\text{-matching}} (j-i) 
- \texttt{\#}\{\text{crossings in $\bar{\gamma}$} \}.$$
\end{definition}

\begin{example}
For the $(4,2,5)$-preclan \eqref{eq:preclaneg}.
The $(4,0,5)$-preclan $\bar{\gamma}$ obtained from $\gamma$ is
\begin{equation}\label{eq:claneg}
\bar{\gamma} = \clan{5-63-..+.}.
\end{equation}
We have 
$\ell(\bar{\gamma})=(6-1)+(9-3)+(7-4)-2=12.$
Since the indices of uncolored nodes are $5,9$; we thus have 
$$\ell(\gamma)=12+(5-1)+(9-2)=23.$$
\end{example}

\subsection{The triple support}
A preclan $\gamma$ is called a \emph{Richardson} preclan, if 
\begin{itemize}
    \item the matchings of $\gamma$ are pairwise non-crossing; 
    \item uncolored nodes $\clan{,}$'s of $\gamma$ are not covered by any matchings. 
\end{itemize}

Then we have the following triple Schubert expansion.
\begin{theorem}[\cite{S-orbit}]\label{th:triplecoeffi}
For a Richardson $(p,m,q)$-preclan $\gamma$, we have 
$$
\mathfrak{S}_{u_\gamma}(x;y)\cdot
\mathfrak{S}_{v_\gamma}(x;t)
=\sum_{w\in S_n} c_{u_\gamma,v_\gamma}^w(t;y)\cdot \mathfrak{S}_w(x;t),
$$
where
$$c_{u_\gamma,v_\gamma}^w(t;y)= 
\begin{cases}
\widetilde{F}_{f_{w*\gamma}}(-y_q,\ldots,-y_1;-t_p,\ldots,-t_1,-t_{p+1},\ldots,-t_n), \\\qquad \text{if } \ell(w*\gamma)=\ell(w)+\ell(\gamma) \,\text{ and } v_{w*\gamma}=e, \\[1ex]
0, \quad \text{otherwise}. 
\end{cases}$$
\end{theorem}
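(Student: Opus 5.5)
The plan is to prove Theorem~\ref{th:triplecoeffi} by attaching to every $(p,m,q)$-preclan $\gamma$ a polynomial $P_\gamma(x;t;y)$ and reading off the coefficients $c^w_{u_\gamma,v_\gamma}(t;y)$ from divided differences of $P_{\gamma}$. Concretely, take $P_\gamma$ to be the equivariant class of the orbit closure attached to $\gamma$, i.e.\ the variety of flags in relative position governed by $\gamma$ with respect to two flags moved by two different tori. The first step is to show that for a Richardson preclan this orbit closure is the transverse intersection of two Schubert varieties of types $u_\gamma$ and $v_\gamma$. Transversality should follow from the conditions that the matchings do not cross and no uncolored node is covered. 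Granting this, $P_\gamma=\mathfrak{S}_{u_\gamma}(x;y)\,\mathfrak{S}_{v_\gamma}(x;t)$.

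The second step is to establish the weak-order recursion
$$\partial_k P_\gamma=\begin{cases}P_{s_k*\gamma}, & \ell(s_k*\gamma)=\ell(\gamma)+1,\\ 0,& \text{otherwise},\end{cases}$$
with $\ell$ as in Definition~\ref{length}. Geometrically, $\partial_k$ is push--pull along $G/B\to G/P_k$, and the nine local moves \eqref{eq:clanweak1}--\eqref{eq:clanweak3} are exactly the cases where the $P_k$-saturation of the orbit is a strictly larger orbit, with degree one. I would verify the degree-one claim move by move, checking that no move has degree $2$, which is the usual symmetric-pair phenomenon. The recursion can also be checked purely algebraically on the Richardson starting points using the Leibniz rule for $\partial_k$ on the product $\mathfrak{S}_{u_\gamma}(x;y)\,\mathfrak{S}_{v_\gamma}(x;t)$.

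The third step is coefficient extraction. For any $f$, the coefficient of $\mathfrak{S}_w(x;t)$ in $f$ equals $(\partial_{w}f)|_{x=t}$, where $\partial_w$ is the composite of $\partial_{i}$'s along a reduced word of $w$ in the appropriate order, followed by localization at the identity; this holds because $\partial_w\mathfrak{S}_z(x;t)|_{x=t}=\delta_{w,z}$. Iterating the recursion then gives
$$c^w_{u_\gamma,v_\gamma}(t;y)=P_{w*\gamma}|_{x=t}$$
when $\ell(w*\gamma)=\ell(w)+\ell(\gamma)$, and $0$ otherwise. The independence of $w*\gamma$ from the chosen reduced word is used here.

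It remains to evaluate $P_\delta|_{x=t}$ for an arbitrary preclan $\delta$. Localizing at the identity flag detects whether the identity $T$-fixed point lies in the orbit closure. The factor recording position relative to the $t$-flag is essentially $\mathfrak{S}_{v_\delta}$-type, so $P_\delta|_{x=t}$ vanishes unless $v_\delta=e$, by the localization criterion $\mathfrak{S}_z(t;t)\neq0\iff z=e$ used in the proof of Lemma~\ref{fangda1}. When $v_\delta=e$, what survives is the restriction of the orbit-closure class to a point, which is a $y$-versus-$t$ analogue of a positroid (bounded affine permutation) class. This should equal the affine Stanley symmetric function $\widetilde{F}_{f_\delta}$ in the variables $-y_q,\ldots,-y_1$ and $-t_p,\ldots,-t_1,-t_{p+1},\ldots,-t_n$, using Knutson--Lam--Speyer's identification of positroid classes with affine Stanley functions.

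I expect this last identification to be the main obstacle: matching the point restriction with $\widetilde{F}_{f_\delta}$ under exactly the stated variable ordering requires the precise definition of $f_\delta$ from \cite{S-orbit}. To handle it, I would first try induction on $\ell(\delta)$, showing that both sides satisfy the same $y$-side recursion given by the left operators $\varpi_k$ of \eqref{leftoperator}, together with the same initial values.
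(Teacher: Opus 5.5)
The paper does not prove this theorem; it is quoted from \cite{S-orbit}, so your outline has to stand on its own. Granting the geometric input, your Steps 1--3 are a reasonable route to the vanishing half. That input is: Richardson preclans give Richardson orbit closures, $\ell$ is orbit dimension, and each of the nine moves is a degree-one $P_k$-saturation raising $\ell$ by one. With $P_\gamma=\mathfrak{S}_{u_\gamma}(x;y)\mathfrak{S}_{v_\gamma}(x;t)$ and a reduced word $w=s_{i_1}\cdots s_{i_\ell}$, you get $c^w_{u_\gamma,v_\gamma}(t;y)=(\partial_{i_1}\cdots\partial_{i_\ell}P_\gamma)|_{x=t}$. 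This equals $P_{w*\gamma}|_{x=t}$ if $\ell(w*\gamma)=\ell(w)+\ell(\gamma)$ and $0$ otherwise. Reading your ``factor'' remark as the containment of $\overline{\mathcal{O}_\delta}$ in the Schubert variety of type $v_\delta$ for the $t$-flag, this restriction vanishes unless $v_{w*\gamma}=e$.

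The actual content of the theorem, however, is the closed formula, and there you have a genuine gap. You only assert that $P_\delta|_{x=t}$ ``should equal'' $\widetilde F_{f_\delta}(-y_q,\ldots,-y_1;-t_p,\ldots,-t_1,-t_{p+1},\ldots,-t_n)$. Your argument never uses the definition of the bounded affine permutation $f_\delta$, so nothing in it singles out this formula over any other. The Knutson--Lam--Speyer theorem concerns classes of positroid varieties in a Grassmannian. It says nothing about a two-alphabet restriction of an orbit closure in $Fl(n)$ to a fixed point. To close the gap you would have to exhibit a positroid-type variety attached to $f_\delta$ whose class is that restriction, and that is exactly the missing idea.

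The fallback you propose, induction via $\varpi_k$, does not work. Every $P_\delta$ is obtained from $\mathfrak{S}_{u_\gamma}(x;y)\mathfrak{S}_{v_\gamma}(x;t)$ by divided differences in $x$. Since $u_\gamma^{-1}$ is Grassmannian with descent at $q$, $P_\delta$ is symmetric in $y_1,\ldots,y_q$ and free of $y_{q+1},y_{q+2},\ldots$. Hence $\varpi_kP_\delta=0$ for every $k\neq q$. Meanwhile $\varpi_q$ replaces $u_\gamma$ by $s_qu_\gamma$, which in general is not inverse Grassmannian at all: $u_\gamma^{-1}=2413$ with $q=2$ gives $(s_2u_\gamma)^{-1}=2143$. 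So the recursion leaves the world of preclans immediately, and there is no recursion on $\delta\mapsto\widetilde F_{f_\delta}$ to match it against.

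The geometric model also needs repair before Steps 1--3 even type-check. A torus acting on $Fl(n)$ has only $n$ weights, so in a model with ``two flags moved by two different tori'' the $y$'s get specialized to some of the $t$'s. Only after that specialization is ``set $x=t$'' a restriction to a fixed point. To get the stated identity in $\mathbb{Z}[x,t,y]$ with independent alphabets, you need an explicit lifting or stabilization argument. One option is to enlarge $m$ so that the $y$'s are identified with $t$-variables not occurring in any $\mathfrak{S}_w(x;t)$ under consideration. Your proposal does not supply such an argument.
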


Let $v\in S_\infty$ be any $p$-inverse Grassmannian permutation,   and $u\in S_\infty$ be  any $q$-inverse Grassmannian permutation. By   \cite[Theorem 3.16]{S-orbit}, there exists a Richardson preclan $\gamma=\gamma_{u,v}$ such that $u=u_{\gamma},  v=v_{\gamma}$.
 By Theorem~\ref{th:triplecoeffi}, we know that the triple structure constant is nonzero if and only if 
\[
    \ell(w*\gamma)=\ell(w)+\ell(\gamma) \quad\text{and}\quad v_{w*\gamma}=e.
\]
That is, we have
\begin{equation}\label{eq:supp3_inv1}
    w\in \supp^{3}(u,v) \quad\iff\quad \begin{cases}
        \ell(w*\gamma)=\ell(w)+\ell(\gamma),\\
        v_{w*\gamma}=e.
    \end{cases}
\end{equation}

Define an operation on preclans $\gamma \mapsto\cev{\gamma}$ by interchanging all
$\clan{+}$ and $\clan{-}$ signs in the preclan $\gamma$.
Note that if $\gamma$ is a $(p,m,q)$-preclan,  then $\cev{\gamma}$ is a $(q,m,p)$-preclan, and $$u_{\cev{\gamma}}=v_{\gamma},\quad v_{\cev{\gamma}}=u_{\gamma}.$$ 
Moreover,
the operation is compatible with the weak order, that is,
$w*\cev{\gamma}=\cev{w*\gamma}$. Applying Theorem~\ref{th:triplecoeffi} to $\cev{\gamma}$, we obtain that $w\in \supp^{3}(v_{\gamma},u_{\gamma})$ if and only if $\ell(w*\cev{\gamma})=\ell(w)+\ell(\cev{\gamma})$ and $v_{w*\cev{\gamma}}=e$. 
So we have
\begin{equation}\label{eq:supp3_inv2}
    w\in \supp^{3}(v,u) \quad\iff\quad \begin{cases}
        \ell(w*\gamma)=\ell(w)+\ell(\gamma),\\
        u_{w*\gamma}=e.
    \end{cases}
\end{equation}
Combining $\eqref{eq:supp3_inv1}$ and $\eqref{eq:supp3_inv2}$, we have 
\begin{equation}\label{eq:supp3_inv_cap}
    w\in \supp^{3}(u,v)\cap \supp^{3}(v,u)    
    \iff 
    \begin{cases}
    \ell(w*\gamma)=\ell(w)+\ell(\gamma),\\
    v_{w*\gamma}=e \text{ and } u_{w*\gamma}=e.  
    \end{cases}
\end{equation}

\subsection{The double support}
Throughout this
subsection, we assume, without loss of generality, that $p\geq q$.
We first recall the definition of a permutational preclan. 
\begin{definition}[\cite{S-orbit}]\label{def:perm}
A $(p,m,q)$-preclan $\gamma$ is called {\it permutational}, if 
\begin{itemize}
    \item the first $q$ nodes are left-ends; 
    \item the next $p-q$ nodes are $\clan{+}$.
\end{itemize}
In particular, $v_\gamma=e$ and there is no $\clan{-}$ in $\gamma$.    
For a permutational $(p,m,q)$-preclan $\gamma$, we define a permutation $\eta_{\gamma}\in S_{q+m}$ by 
\begin{equation*}
\eta_\gamma(i)=f_\gamma(i+p-q)-p,\quad \text{for $1\leq i\leq q+m$.}
\end{equation*}
\end{definition}
The property of being permutational has the following simpler
characterization.
\begin{lemma}[\cite{S-orbit}]\label{lem:permutational}
    Let $\gamma$ is a $(p,m,q)$-preclan and $p\ge q$. Then $\gamma$ is  permutational if and only if  $u_{\gamma}=e$ and $v_{\gamma}=e$.
\end{lemma}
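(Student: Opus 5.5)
The plan is to unwind the definitions of $u_\gamma$ and $v_\gamma$ directly; no earlier result beyond the labeling rules of Section~\ref{sec:preclans} should be needed.

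The first step is to reformulate the condition $v_\gamma=e$. In the labeling that defines $v_\gamma$, the left-ends and $\clan{+}$'s receive the labels $1,\dots,p$ from left to right, and their number is exactly $p$. All other nodes receive $p+1,\dots,n$, also from left to right. So the word $v_\gamma$ is the identity exactly when the $p$ smallest labels sit in the first $p$ positions. Hence
\[
v_\gamma=e \iff \text{each of the first $p$ nodes of $\gamma$ is a left-end or a $\clan{+}$}.
\]
In the same way, since the number of left-ends plus the number of $\clan{-}$'s equals $q$,
\[
u_\gamma=e \iff \text{each of the first $q$ nodes of $\gamma$ is a left-end or a $\clan{-}$}.
\]

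For the direction ``$\Rightarrow$'', suppose $\gamma$ is permutational. Its first $q$ nodes are left-ends and the next $p-q$ nodes are $\clan{+}$'s, so each of the first $p$ nodes is a left-end or a $\clan{+}$, which gives $v_\gamma=e$. Also, each of the first $q$ nodes is a left-end, which gives $u_\gamma=e$.

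For the direction ``$\Leftarrow$'', assume $u_\gamma=v_\gamma=e$. Since $q\le p$, each of the first $q$ nodes is simultaneously a left-end or $\clan{+}$, and a left-end or $\clan{-}$. A node cannot carry both colors, so each of these $q$ nodes is a left-end. This already accounts for $q$ matchings. Because the number of matchings plus the number of $\clan{-}$'s is $q$, $\gamma$ has no further left-ends and no $\clan{-}$ at all. Consequently nodes $q+1,\dots,p$, which are left-ends or $\clan{+}$'s by the condition $v_\gamma=e$, must all be $\clan{+}$'s. Therefore $\gamma$ is permutational.

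The only point needing care is this counting step: the $(p,m,q)$ bookkeeping forces the matchings to be exactly the first $q$ nodes. Once that is observed, the argument is routine.
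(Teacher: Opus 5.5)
Your proof is correct and complete. Both labeling characterizations hold ($v_\gamma=e$ iff the first $p$ nodes are left-ends or $+$'s; $u_\gamma=e$ iff the first $q$ nodes are left-ends or $-$'s), and the counting step in ``$\Leftarrow$'' correctly uses that matchings plus $-$'s number exactly $q$. The paper itself gives no proof here; it quotes the lemma from the authors' earlier preclan paper. Your direct unwinding of the definitions is the natural argument and supplies everything needed.
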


The following theorem gives the double Schubert expansion associated with a
Richardson preclan.

\begin{theorem}[\cite{S-orbit}]\label{thm:maindouble}
For a Richardson $(p,m,q)$-preclan $\gamma$, we have 
$$
\mathfrak{S}_{u_\gamma}(x;t)\cdot\mathfrak{S}_{v_\gamma}(x;t)=\sum_{w\in S_n} c_{u_\gamma,v_\gamma}^w(t)\cdot \mathfrak{S}_w(x;t),$$
where 
$$c_{u_\gamma,v_\gamma}^w(t)
=\begin{cases}
\mathfrak{S}_{\eta_{w*\gamma}}(-t_q,\ldots,-t_1;-t_{p+1},\ldots,-t_n),\\
\qquad \quad \text{if $\ell(w*\gamma)=\ell(w)+\ell(\gamma)$ and $w*\gamma$ is permutational};\\[1ex]
0, \qquad \text{otherwise}. 
\end{cases}
$$
\end{theorem}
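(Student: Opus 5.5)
The plan is to obtain the double expansion in Theorem~\ref{thm:maindouble} from the triple expansion in Theorem~\ref{th:triplecoeffi}, using the specialization $y=t$. Since $\mathfrak{S}_{u_\gamma}(x;y)|_{y=t}=\mathfrak{S}_{u_\gamma}(x;t)$, we have $c_{u_\gamma,v_\gamma}^w(t)=c_{u_\gamma,v_\gamma}^w(t;t)$. Theorem~\ref{th:triplecoeffi} then gives $c_{u_\gamma,v_\gamma}^w(t)=0$ unless
$$\ell(w*\gamma)=\ell(w)+\ell(\gamma)\quad\text{and}\quad v_{w*\gamma}=e.$$
When both conditions hold, it gives
$$c_{u_\gamma,v_\gamma}^w(t)=\widetilde{F}_{f_{w*\gamma}}(-t_q,\ldots,-t_1;-t_p,\ldots,-t_1,-t_{p+1},\ldots,-t_n).$$
It therefore remains to do two things: cut the support down to the permutational preclans, and identify the specialized affine Stanley function with a double Schubert polynomial.

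\textbf{Support.} Double Schubert coefficients are symmetric, $c_{u,v}^w(t)=c_{v,u}^w(t)$. By \eqref{eq:1<2<3}, any $w$ with $c_{u_\gamma,v_\gamma}^w(t)\neq0$ therefore lies in both $\supp^3(u_\gamma,v_\gamma)$ and $\supp^3(v_\gamma,u_\gamma)$. By \eqref{eq:supp3_inv_cap}, this forces $\ell(w*\gamma)=\ell(w)+\ell(\gamma)$ together with $u_{w*\gamma}=v_{w*\gamma}=e$. Since $p\ge q$, Lemma~\ref{lem:permutational} says these two equalities are equivalent to $w*\gamma$ being permutational. This settles the vanishing part of the statement without any analysis of $\widetilde{F}$.

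\textbf{Formula.} Let $\delta=w*\gamma$ be permutational. Its first $q$ nodes are left-ends and its next $p-q$ nodes are $\clan{+}$'s. I would use this rigid initial segment to read off the window of the bounded affine permutation $f_\delta$ from \cite[Definition 6.1]{S-orbit}. I expect $f_\delta$ to act essentially as a shift on the positions $1,\ldots,p-q$ (the $\clan{+}$'s and the initial block). On the remaining $q+m$ positions, after shifting by $p-q$ and subtracting $p$, it should act as the finite permutation $\eta_\delta$ of Definition~\ref{def:perm}.

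I would then use the standard factorization and specialization properties of the double affine Stanley functions, as used in \cite{S-orbit}:
\begin{itemize}
\item Substitute the first block of second variables $-t_p,\ldots,-t_1$ against the first variables $-t_q,\ldots,-t_1$.
\item The factors $(x_i-y_j)$ involving $t_1,\ldots,t_q$ should telescope away, as does the block $t_{q+1},\ldots,t_p$ coming from the $\clan{+}$'s.
\item What remains should be $\mathfrak{S}_{\eta_\delta}(-t_q,\ldots,-t_1;-t_{p+1},\ldots,-t_n)$.
\end{itemize}

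\textbf{Main obstacle.} The hard step is this last identification. Making the telescoping precise requires the explicit combinatorial model of $\widetilde{F}_f$ (for instance, a pipe-dream or tableau model for affine Stanley functions), not only its specialization. If the direct route fails, I would instead proceed by induction on the weak order of preclans:
\begin{itemize}
\item Both sides transform compatibly under the divided differences in the $t$ variables, by \eqref{leftoperator} for the Schubert side and the corresponding recursion for $\widetilde{F}_{f_{s_k*\delta}}$ for the other side.
\item Reduce to the base case where $\eta_\delta$ is the longest element. There both sides are explicit products of linear forms $(t_j-t_i)$ and can be compared directly.
\end{itemize}
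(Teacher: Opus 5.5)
The paper does not prove this statement; it quotes it from \cite{S-orbit} and only combines it with Theorem~\ref{th:triplecoeffi} and Lemma~\ref{lem:permutational} to get \eqref{eq:supp2_inv}. Judged on its own, the first half of your proposal is sound. You correctly note $c^w_{u_\gamma,v_\gamma}(t)=c^w_{u_\gamma,v_\gamma}(t;t)$. Your vanishing argument is also correct, granting Theorem~\ref{th:triplecoeffi}. It uses only the implication ``nonzero $\Rightarrow$ conditions'' of that theorem for $\gamma$ and for $\cev{\gamma}$ (again Richardson, with $\ell(\cev{\gamma})=\ell(\gamma)$), commutativity of the double product, and the purely combinatorial Lemma~\ref{lem:permutational}. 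This proves the ``otherwise $0$'' clause. It also shows that, when $\delta=w*\gamma$ is permutational and length-additive, $c^w_{u_\gamma,v_\gamma}(t)=\widetilde{F}_{f_{\delta}}(-t_q,\ldots,-t_1;-t_p,\ldots,-t_1,-t_{p+1},\ldots,-t_n)$.

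The genuine gap is the identity
\[
\widetilde{F}_{f_{\delta}}(-t_q,\ldots,-t_1;-t_p,\ldots,-t_1,-t_{p+1},\ldots,-t_n)=\mathfrak{S}_{\eta_{\delta}}(-t_q,\ldots,-t_1;-t_{p+1},\ldots,-t_n)
\]
for permutational $\delta$, which is the actual content of the theorem. You only state expectations: $f_\delta$ ``should'' act as a shift, and factors ``should telescope''. You have neither the definition of $f_\delta$ nor any model of $\widetilde{F}_f$. The relation $\eta_\delta(i)=f_\delta(i+p-q)-p$ only ties $\eta_\delta$ to the values of $f_\delta$ on $p-q+1,\ldots,p+m$. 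It says nothing about how $\widetilde{F}_{f_\delta}$ behaves when its first alphabet is specialized into its second. What you need is a real specialization/vanishing lemma for these double affine Stanley functions, in the spirit of the vanishing properties of factorial Schur functions, and you do not supply one.

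Your fallback induction also fails as described. \eqref{leftoperator} gives a clean recursion in the triple setting only because $y$ occurs in a single factor and not in the basis $\mathfrak{S}_w(x;t)$. In the double setting $t$ occurs in both factors and in the basis. The twisted Leibniz rule $\varpi_k(fg)=\varpi_k(f)\,g+f\,\varpi_k(g)+(t_k-t_{k+1})\varpi_k(f)\varpi_k(g)$ then yields relations mixing the coefficients for the pairs $(s_ku,v)$, $(u,s_kv)$, $(s_ku,s_kv)$, and the coefficients at both $w$ and $s_kw$. That is not a recursion along the weak order of preclans, and you never say how $\eta_{s_k*\delta}$ relates to $\eta_\delta$.

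So as written you have only the forward implication of \eqref{eq:supp2_inv}, i.e.\ the trivial inclusion $\supp^2\subseteq\supp^3(u,v)\cap\supp^3(v,u)$. The reverse inclusion is what Section~\ref{section5} actually needs, and it rests precisely on the missing identity.
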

Now let $\gamma=\gamma_{u,v}$ be the Richardson preclan associated with
$u$ and $v$. By Theorem~\ref{thm:maindouble}, the coefficient
$c_{u,v}^{w}(t)$ is nonzero if and only if
\[
\ell(w*\gamma)=\ell(w)+\ell(\gamma)\quad \text{and\quad $w*\gamma$ is permutational.}
\]
Consequently,
\begin{equation}
\label{eq:supp2_inv}
w\in \supp^{2}(u,v)
\quad\Longleftrightarrow\quad
\begin{cases}
\ell(w*\gamma)=\ell(w)+\ell(\gamma),\\
w*\gamma \text{ is permutational}.
\end{cases}
\end{equation}

\begin{proof}[Proof of Theorem \ref{thm:inverse grass cases}]
Combining \eqref{eq:supp3_inv_cap}, \eqref{eq:supp2_inv} and Lemma~\ref{lem:permutational}, we get the proof of Theorem \ref{thm:inverse grass cases}.
\end{proof}

\begin{example}
    For our running example $u=312$ and $v=231$, we have 
    \begin{align*}
    \supp^3(u,v)& =\{s_1s_2,s_2s_1s_2,s_3s_2s_1s_2\},\\
    \supp^3(v,u)&=\{s_2s_1,s_1s_2s_1,s_3s_2s_1,s_3s_2s_1s_2\},\\
    \supp^2(u,v)&=\{s_3s_2s_1,s_3s_2s_1s_2\},
    \end{align*}
    as shown below (where the framed preclans $\gamma$ satisfy $v_\gamma= e$)
   $$\begin{array}{cc}
\begin{matrix}
\scalebox{1}{\xymatrix@=1pc{&
\fmyclan{2-.,}&\\
&
\fmyclan{3-,.}\ar[u]^{3} &
\myclan{-1.,}\ar[ul]_1 \\
\fmyclan{3,-.} \ar[ur]_2&&
\myclan{-2,.} \ar[ul]^1\ar[u]^3\\
\myclan{,2-.}\ar[u]^1 &&
\myclan{-,1.}\ar[u]^2 \\& 
\myclan{,-1.}\ar[ul]^2\ar[ur]_1}}
\end{matrix}
   \qquad
   &
\begin{matrix}
\scalebox{1}{\xymatrix@=1pc{&
\fmyclan{2+.,}&\\
&
\fmyclan{3+,.}\ar[u]^{3} &
\fmyclan{+1.,}\ar[ul]_1 \\
\myclan{3,+.} \ar[ur]_2&&
\fmyclan{+2,.} \ar[ul]^1\ar[u]^3\\
\myclan{,2+.}\ar[u]^1 &&
\myclan{+,1.}\ar[u]^2 \\& 
\myclan{,+1.}\ar[ul]^2\ar[ur]_1}}
\end{matrix}
   \end{array}
$$
These agree with Example \ref{eg:runningeg}. 
\end{example}

\section{Horn's Inequalities and Triple Saturation Property}\label{section6}

In this section, we will restrict to Grassmannian permutations. 
Fix $k\geq 0$, and let $\mathbb{P}$ be the set of partitions of length at most $k$. For partitions $\lambda=(\lambda_1,\ldots,\lambda_k)$ and $\mu=(\mu_1,\ldots,\mu_k)$ in $\mathbb{P}$, we write $\lambda\leq \mu$ if $\lambda_i\leq \mu_i$ for $i=1,\ldots,k$, i.e., the Young diagram of $\lambda$ is contained in that of $\mu$.
For a partition $\lambda=(\lambda_1,\ldots,\lambda_k)\in \mathbb{P}$, define 
$w_\lambda$ to be the corresponding $k$-Grassmannian permutation defined as
\[
w_{\lambda}(k+1-i)=\lambda_i+k+1-i, \qquad \text{for   $i=1,\ldots,k$},
\]
and the last $n-k$ elements of $w_{\lambda}$ are arranged increasingly. It is well known that $w_{\lambda}\le w_{\mu}$ if and only if $\lambda\subseteq \mu$, and
$\mathfrak{S}_{w_{\lambda}}(x)$ is the Schur polynomial $s_{\lambda}(x)$. For partitions $\lambda,\mu$ and $\nu$, the single, double, and triple LR-coefficients $c_{\lambda,\mu}^{\nu}$,   $c_{\lambda,\mu}^{\nu}(t)$,  $c_{\lambda,\mu}^{\nu}(t;y)$ are respectively defined as:
\begin{align}
    s_{\lambda}(x)\cdot s_{\mu}(x)&=\sum_{\nu}c_{\lambda,\mu}^{\nu}\cdot s_{\nu}(x),\label{eq:singleLR}\\
    s_{\lambda}(x;t)\cdot s_{\mu}(x;t)&=\sum_{\nu}c_{\lambda,\mu}^{\nu}(t)\cdot s_{\nu}(x;t).\label{eq:doubleLR}\\
    s_{\lambda}(x;y)\cdot s_{\mu}(x;t)&=\sum_{\nu}c_{\lambda,\mu}^{\nu}(t;y)\cdot s_{\nu}(x;t).\label{eq:triLR}
\end{align}

For $i=1,2,3,$  denote 
$$\supp^i(\lambda,\mu)=\{\nu\in \mathbb{P}: w_\nu\in \supp^i(w_\lambda,w_\mu)\}.$$
Then Theorem \ref{th:puzzle} can be written as 
\begin{equation}\label{eq:th4.3}
\supp^2(\lambda,\mu)
=
\supp^3(\lambda,\mu)\cap \{\nu\in \mathbb{P}:\nu\geq \lambda\}. 
\end{equation}
Similarly, Theorem \ref{thm:i->3} can be written as 
\begin{equation}\label{eq:th1.1}
\supp^3(\lambda,\mu)
= \bigcup_{\lambda'\leq \lambda}\supp^i(\lambda',\mu),\quad \text{for}\, i=1,2,3.
\end{equation}

 For a partition $\lambda$ and an integer $N>0,$ let $N\lambda$ be the partition obtained from $\lambda$ by multiplying $N$ on each part of $\lambda$. 
 Knutson and Tao \cite{KT} established the saturation theorem of $c_{\lambda,\mu}^{\nu}$ by using honeycomb models.
 Anderson, Richmond and Yong \cite{ARY} obtained the double version of the saturation theorem of  $c_{\lambda,\mu}^{\nu}(t)$ via edge labeled tableaux models.
Moreover, the non-vanishing   of both  single and  double LR-coefficients  are controlled by the Horn's inequalities. 

\begin{theorem}[\cite{KT}]
\label{thm:saturation1}
For  partitions $\lambda,\mu,\nu$ and any integer $N>0$, we have
   \[\text{ $c_{\lambda,\mu}^{\nu}\neq 0$\quad if and only if \quad $c_{N\lambda,N\mu}^{N\nu}\neq 0$.}\]
\end{theorem}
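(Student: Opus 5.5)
We would follow Knutson and Tao's polyhedral approach, replacing the tableau description of $c_{\lambda,\mu}^{\nu}$ by the hive model. The identity we would use is $c_{\lambda,\mu}^{\nu}=\#\big(H(\lambda,\mu,\nu)\cap\mathbb{Z}^{D}\big)$. Here $H(\lambda,\mu,\nu)\subset\mathbb{R}^{D}$ is the hive polytope: real labelings of the triangular array of size $k$ whose boundary increments are fixed by $\lambda,\mu,\nu$, subject to the rhombus inequalities. We would first check three features of this model.
\begin{enumerate}
\item The boundary data enter linearly.
\item All rhombus inequalities are homogeneous, so $H(N\lambda,N\mu,N\nu)=N\cdot H(\lambda,\mu,\nu)$.
\item Hives add: the sum of a hive for $(\lambda,\mu,\nu)$ and a hive for $(\lambda',\mu',\nu')$ is a hive for $(\lambda+\lambda',\mu+\mu',\nu+\nu')$.
\end{enumerate}
(Equivalently, one can work with honeycombs, where the same three features hold.)

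The direction ``only if'' then follows from additivity. If $c_{\lambda,\mu}^{\nu}\neq0$, take an integral hive $h$. Then $Nh$ is an integral hive with boundary $(N\lambda,N\mu,N\nu)$, so $c_{N\lambda,N\mu}^{N\nu}\neq0$.

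For ``if'', suppose $c_{N\lambda,N\mu}^{N\nu}\neq 0$ and pick an integral hive $h'$. Then $h'/N$ is a rational point of $H(\lambda,\mu,\nu)$, so this polytope is nonempty. It therefore suffices to prove the key lemma: \emph{whenever $H(\lambda,\mu,\nu)$ is nonempty and the boundary data are integral, it contains an integral point.} The polytope is not known to be integral in general, so we would not try to show every vertex is a lattice point. Following Knutson--Tao, we would instead choose a generic linear functional, namely a generic positive weighting of the interior vertices, and take a hive $h$ in $H(\lambda,\mu,\nu)$ maximizing it (a ``largest lift''). We then show this particular $h$ is integral.

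The main obstacle is that structural analysis. For the optimal $h$, we would decompose the triangle into \emph{flatspaces}: maximal regions on which all rhombus inequalities are equalities, so $h$ is affine there. We would show three things.
\begin{enumerate}
\item Each flatspace is a convex union of unit triangles whose edges point in the three lattice directions, and flatspaces meet along lines of those directions.
\item By genericity and maximality, no flatspace is entirely interior. Otherwise we could push $h$ up on it, in the manner of a ``puzzle move'', staying inside the polytope and increasing the functional.
\item Hence every flatspace touches the boundary along a segment, so the affine function on it is determined by integral boundary increments.
\end{enumerate}
Propagating inward from the boundary, flatspace by flatspace, then shows that all values of $h$ are integers. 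We expect the hardest part to be the no-interior-flatspace claim: making the perturbation argument precise and showing that the possible shapes of flatspaces are sufficiently constrained. If the direct argument stalls, we would fall back on Buch's simplified hive proof, which carries out the same largest-lift argument more concretely.
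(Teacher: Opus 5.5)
The paper gives no proof of this statement; it is quoted from Knutson--Tao. Your outer framework is the standard one and is correct:
\begin{itemize}
\item $c_{\lambda,\mu}^{\nu}$ counts lattice points of the hive polytope;
\item the rhombus inequalities are homogeneous, so $H(N\lambda,N\mu,N\nu)=N\,H(\lambda,\mu,\nu)$;
\item ``only if'' follows from $h\mapsto Nh$;
\item ``if'' reduces to showing that a nonempty hive polytope with integral border contains a lattice point, to be found at a maximizer of a generic functional.
\end{itemize}
That key lemma, however, is the entire content of saturation, and the mechanism you sketch for it does not work.

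Take first the move ``push $h$ up on an interior flatspace $F$''. It is not admissible in general. Suppose you add $\epsilon$ to every label on $\bar F$. Pick a lattice point $p$ in the relative interior of a side $e$ that $F$ shares with a neighbouring flatspace $G$. Consider the rhombus formed by the unit triangle with vertex $p$ whose opposite side is parallel to $e$ (it lies in $G$ by convexity), together with the unit triangle beyond that side. Its acute vertices are $p$ and a point at distance $\sqrt3$ from $e$, and its obtuse vertices lie off $\bar F$. Whenever this rhombus lies in $G$ it is tight, and raising $p$ alone violates it. If you raise only the interior lattice points of $F$, you instead violate tight rhombi of $F$ at its $60^\circ$ corners: the acute vertex sits at the corner, the obtuse vertices lie on the two sides, and the far acute vertex is interior. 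Controlling such obstructions is exactly what a proof must do, and ``genericity and maximality'' do not remove them by themselves. Since you rightly do not assume the polytope is integral, the mere fact that the maximizer is a vertex cannot suffice either.

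Your step (3) and the inward propagation are also wrong. A flatspace of the maximizer need not meet the border along a segment. In the size-$3$ hive, with single interior vertex $p$, maximizing $h(p)$ makes exactly one rhombus with acute vertex $p$ tight (for generic integral border). The other two unit triangles at $p$ whose opposite side cuts off a corner remain flatspaces, and each meets the border only in two vertices lying on different sides. There $h(p)$ is integral because of the tight rhombus, not because of any border segment.

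Even when a border segment exists, it fixes only two of the three coefficients of the affine function on a flatspace. Crossing a shared side with an integral neighbour writes $L_G=L_F+c\,m$ with $m$ primitive. One further known value at lattice distance $d$ from that side only forces $c\in\frac1d\mathbb{Z}$, so propagation can produce denominators.

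The missing idea is Knutson--Tao's integrality mechanism. Roughly: in the dual honeycomb, the constant coordinates of the edges at a vertex sum to zero, so a non-integral edge can always be continued. Since the boundary is integral, the non-integral part must close up into a loop. One then shows, after a careful analysis of degenerate vertices, that such a loop (a ``gentle loop'') can be deformed so as to increase the functional, contradicting maximality. Falling back on Buch's hive version would close the gap, but then your argument, like the paper's, is a citation rather than a proof.
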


\begin{theorem}[\cite{ARY}]
\label{thm:saturation2}
For  partitions $\lambda,\mu,\nu$ and any   integer $N>0$, we have
\[\text{     $c_{\lambda,\mu}^{\nu}(t)\neq 0$\quad if and only if \quad $c_{N\lambda,N\mu}^{N\nu}(t)\neq 0$. }\]
\end{theorem}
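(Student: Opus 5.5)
By \eqref{eq:th4.3} and \eqref{eq:th1.1} with $i=1$, the double support is described purely in terms of single LR-coefficients:
\begin{equation*}
c_{\lambda,\mu}^{\nu}(t)\neq 0 \iff \nu\supseteq\lambda \text{ and } c_{\lambda',\mu}^{\nu}\neq 0 \text{ for some partition } \lambda'\subseteq\lambda .
\end{equation*}
So I would deduce Theorem \ref{thm:saturation2} from this description together with Theorem \ref{thm:saturation1}.

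The forward direction is immediate. Suppose $\nu\supseteq\lambda$ and $c_{\lambda',\mu}^{\nu}\neq0$ with $\lambda'\subseteq\lambda$. Then $N\nu\supseteq N\lambda$ and $N\lambda'\subseteq N\lambda$. Theorem \ref{thm:saturation1} gives $c_{N\lambda',N\mu}^{N\nu}\neq0$, hence $c_{N\lambda,N\mu}^{N\nu}(t)\neq0$.

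For the converse, suppose $c_{N\lambda,N\mu}^{N\nu}(t)\neq 0$. Then $N\nu\supseteq N\lambda$, so $\nu\supseteq\lambda$. Moreover there is a partition $\lambda''\subseteq N\lambda$ with $c_{\lambda'',N\mu}^{N\nu}\neq0$, and it remains to produce an \emph{integral} $\lambda'\subseteq\lambda$ with $c_{\lambda',\mu}^{\nu}\neq0$. I would pass to the real picture. By the Knutson--Tao hive (or Horn) description, $c_{\alpha,\beta}^{\gamma}\neq0$ for partitions iff the hive polytope with boundary $(\alpha,\beta,\gamma)$ has a lattice point, iff it is nonempty.

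Consider the polytope $\mathcal{H}(\lambda,\mu,\nu)$ of real hives whose two boundary sides are fixed by $\mu$ and $\nu$. Its third side $\alpha$ is free, subject to the linear constraints $\alpha_1\ge\cdots\ge\alpha_k\ge0$ and $\alpha_i\le\lambda_i$. All defining inequalities have integer coefficients and integral right-hand sides depending linearly on $(\lambda,\mu,\nu)$, so $\mathcal{H}(N\lambda,N\mu,N\nu)=N\,\mathcal{H}(\lambda,\mu,\nu)$. The integral hive witnessing $c_{\lambda'',N\mu}^{N\nu}\neq0$ lies in the dilate, so after dividing by $N$ we get that $\mathcal{H}(\lambda,\mu,\nu)$ is nonempty.

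The main obstacle is to show that this nonempty polytope contains a lattice point. Any such point gives an integral $\lambda'\subseteq\lambda$ and an integral hive, hence $c_{\lambda',\mu}^{\nu}\neq0$. I would first try to adapt the Knutson--Tao ``largest lift'' argument: maximize a generic linear functional, which must include a small generic weight on the free boundary entries, over $\mathcal{H}(\lambda,\mu,\nu)$. At the optimal vertex I would show that no non-integral flatspace can exist. Interior flatspaces are handled exactly as in \cite{KT}. Flatspaces touching the free side could be pushed until either a rhombus inequality or one of the constraints $\alpha_i\le\lambda_i$, $\alpha_i\ge\alpha_{i+1}$ becomes tight, and these are again integral hyperplanes, so the vertex must be integral.

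If that adaptation fails, the fallback is to use the fact that $\bigcup_{\lambda'\subseteq\lambda}$ of the Horn cones is cut out by the Horn inequalities together with the box constraints. I would then check directly that a real point $\alpha\le\lambda$ can be rounded to an integral partition preserving the Horn inequalities, for instance by taking the rounding $\lambda'$ of the real third side toward the nearest integral point of the Horn cone at fixed $(\mu,\nu)$ using Knutson--Tao saturation.
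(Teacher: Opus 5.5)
The parts up to the lattice point are fine: your reduction ($c_{\lambda,\mu}^{\nu}(t)\neq0$ iff $\nu\supseteq\lambda$ and $c_{\lambda',\mu}^{\nu}\neq0$ for some $\lambda'\subseteq\lambda$, via \eqref{eq:th4.3} and \eqref{eq:th1.1}), the forward direction, and the nonemptiness of $\mathcal{H}(\lambda,\mu,\nu)$. The gap is the step you flag yourself, producing a lattice point of $\mathcal{H}(\lambda,\mu,\nu)$. This is the entire content of the converse, and neither of your two routes proves it.

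In the largest-lift sketch, ``the constraints that become tight are integral hyperplanes, so the vertex must be integral'' does not follow. A vertex cut out by hyperplanes with integer data is in general only rational. The Knutson--Tao perturbation argument \cite{KT} works because the non-integral vertices stay away from a \textit{fixed} integral boundary. Once a side is free, non-integral values can sit on it, and a tight constraint such as $\alpha_i=\alpha_{i+1}$ forces no integrality. That analysis would have to be redone, and you have not done it.

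The fallback has the same problem. Knutson--Tao saturation identifies lattice points of the Horn cone with nonzero LR coefficients. It says nothing about whether the nonempty real set of $\alpha\subseteq\lambda$ with $(\alpha,\mu,\nu)$ in the Horn cone contains a lattice point. That integer-feasibility statement is exactly what has to be proved.

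The paper quotes the theorem from \cite{ARY}, but its Section \ref{section6} gets it with a different integrality input. Your one-sided polytope has no off-the-shelf integrality theorem. The missing idea is to pass to the scale-invariant Hermitian criterion and use Fulton's result \cite{Fulton00}, which lets \textit{both} $\lambda$ and $\mu$ shrink. The extra shrinkage of $\mu$ is then removed by applying \eqref{eq:th4.3} a second time with the roles of $\lambda$ and $\mu$ swapped. Concretely:
\begin{itemize}
\item Start from $\lambda''\subseteq N\lambda$ with $c^{N\nu}_{\lambda'',N\mu}\neq0$. Horn's theorem gives $A''+B'=C'$ with spectra $\lambda'',N\mu,N\nu$.
\item Enlarge $A''$ in its eigenbasis to spectrum $N\lambda$ and divide by $N$. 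This gives $A+B\geq C$ with spectra $\lambda,\mu,\nu$.
\item Fulton's theorem, as used in the proof of Theorem \ref{thm:saturation3}, gives integral $\lambda'\subseteq\lambda$ and $\mu'\subseteq\mu$ with $c^{\nu}_{\lambda',\mu'}\neq0$.
\item Since $\mu\subseteq\nu$ by Lemma \ref{fangda1}, \eqref{eq:th1.1} and \eqref{eq:th4.3} give $\nu\in\supp^1(\mu',\lambda')\subseteq\supp^3(\mu,\lambda')\cap\{\rho\in\mathbb{P}:\rho\supseteq\mu\}=\supp^2(\mu,\lambda')=\supp^2(\lambda',\mu)\subseteq\supp^3(\lambda,\mu)$.
\item Together with $\lambda\subseteq\nu$, this is $\nu\in\supp^2(\lambda,\mu)$, which yields the integral $\lambda'$ you wanted.
\end{itemize}
Put differently, by \eqref{eq:th4.3} the statement is an immediate consequence of the triple saturation Theorem \ref{triplesaturation}, whose proof runs through exactly this argument.
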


\begin{theorem}[\cite{ARY}]\label{yongthm1.3}
Let $\lambda,\mu,\nu\in \mathbb{P}$ such that $\lambda\leq \nu$ and $\mu\le \nu$. 
The following are equivalent:
\begin{enumerate}
    \item  
    $c_{\lambda,\mu}^{\nu}(t)\neq 0$. 

    \item For every $d<r$, and every triple of  subsets $I,J,K\subseteq [r]$
    of cardinality $d$ such that $c_{\tau(I),\tau(J)}^{\tau(K)}>0,$
    one has
    \begin{equation*}
    \sum_{i\in I}\lambda_i
    +
    \sum_{j\in J}\mu_j
    \geq
    \sum_{k\in K}\nu_k,
    \end{equation*}
where $\tau(K)=(k_d-d,\ldots,k_1-1)$ for a $d$-subset $K=\{k_1<\cdots<k_d\}\subseteq [r].$

    \item There exist $r\times r$ Hermitian matrices $A$, $B$, and $C$ with eigenvalues $\lambda$, $\mu$, and $\nu$, respectively, such that $
    A+B\geq C.$
\end{enumerate}
\end{theorem}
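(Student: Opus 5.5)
The plan is to reduce all three conditions to the classical single Littlewood--Richardson case. The tools are the description of the double support in \eqref{eq:th4.3} and \eqref{eq:th1.1}, and the Knutson--Tao/Klyachko theorem. That theorem says that for integral partitions with $|\nu|=|\lambda|+|\mu|$, the conditions $c_{\lambda,\mu}^{\nu}\neq 0$, the Horn inequalities of (2), and the existence of Hermitian $A+B=C$ with spectra $\lambda,\mu,\nu$ are all equivalent. We are assuming $\lambda\le\nu$ and $\mu\le\nu$. Combining \eqref{eq:th4.3} with \eqref{eq:th1.1} for $i=1$, condition (1) becomes: there exists an integral partition $\lambda'\le\lambda$ with $c_{\lambda',\mu}^{\nu}\neq 0$. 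The whole proof then amounts to showing that (2) and (3) are each equivalent to the existence of such a $\lambda'$.

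\textbf{(1)$\Rightarrow$(3).} Take $\lambda'\le\lambda$ with $c_{\lambda',\mu}^\nu\neq0$. By Klyachko's theorem there are Hermitian $A',B,C$ with spectra $\lambda',\mu,\nu$ and $A'+B=C$. Diagonalize $A'$ with its eigenvalues in decreasing order, and let $A=A'+D$, where $D$ is diagonal in the same basis with entries $\lambda_i-\lambda'_i\ge 0$. Then $A$ has spectrum $\lambda$ and $A+B\ge C$.

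\textbf{(3)$\Rightarrow$(2).} Put $A'=C-B\le A$. By Weyl monotonicity the spectrum $\lambda'$ of $A'$ satisfies $\lambda'_i\le\lambda_i$ for all $i$. Since $A'+B=C$ exactly, Horn's inequalities for the triple $(\lambda',\mu,\nu)$ give
\[
\sum_{i\in I}\lambda_i+\sum_{j\in J}\mu_j\ \ge\ \sum_{i\in I}\lambda'_i+\sum_{j\in J}\mu_j\ \ge\ \sum_{k\in K}\nu_k .
\]
Note that $\lambda'$ here is only real, but (2) involves only $\lambda$, so this does not matter.

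\textbf{(2)$\Rightarrow$(1).} This is the main obstacle: we must produce an \emph{integral} partition $\lambda'\le\lambda$ with $|\lambda'|=|\nu|-|\mu|$ that still satisfies every inequality of (2). Saturation (Theorem~\ref{thm:saturation1}) then gives $c_{\lambda',\mu}^\nu\neq0$, because the trace equality together with the inequalities of (2) is exactly the Horn system. I would argue by descending induction on $|\lambda|-(|\nu|-|\mu|)$.

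First, this quantity is nonnegative, and there is nothing to do when it is zero. Nonnegativity should follow from (2) applied to the trivial triple $I=J=K=[r]$, or from $d=r-1$ combined with $\lambda,\mu\le\nu$; if this is not immediate, I would allow $d=r$ in (2).

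In the inductive step, when $|\lambda|>|\nu|-|\mu|$, I look for an index $i$ such that lowering $\lambda_i$ by $1$ keeps $\lambda$ a partition and preserves all the inequalities of (2). The candidate is the largest row $i$ that is removable. Suppose every removable corner lies in some \emph{tight} set $I$, meaning one whose inequality holds with equality. Then I would combine tight inequalities in the standard submodular way. The Horn triples are closed under the union/intersection manipulations coming from the recursive structure of Horn's inequalities (Belkale, Knutson--Tao--Woodward). Using this, one should obtain a tight inequality with $I=[r]$, that is, $|\lambda|+|\mu|=|\nu|$, which contradicts the assumption.

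A cleaner fallback is polyhedral. Consider the polytope of real vectors $\lambda'$ satisfying $\lambda'\le\lambda$ coordinatewise, the Horn inequalities for $(\lambda',\mu,\nu)$, and $|\lambda'|=|\nu|-|\mu|$. By (3), or by the argument above, this polytope is nonempty. One then shows it has an integral vertex: the right-hand sides are integral, and each constraint is a $0/1$ subset-sum inequality. Alternatively, take a real $\lambda'$ from Hermitian matrices and round it, invoking the integrality of the hive/honeycomb polytope exactly as in Knutson--Tao's proof of saturation. I expect this integrality step to be where the real work lies.
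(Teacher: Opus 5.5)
Your (1)$\Rightarrow$(3) is the paper's argument. Your reduction of (1) to ``some integral partition $\lambda'\le\lambda$ has $c^{\nu}_{\lambda',\mu}\neq 0$'' (that is, $\nu\in\supp^3(\lambda,\mu)$) via \eqref{eq:th4.3} and \eqref{eq:th1.1} is correct. Your (3)$\Rightarrow$(2), using Weyl monotonicity for $C-B\le A$ together with necessity of Horn's inequalities, is a correct direct proof of something the paper only cites. Your hedge about $d=r$ is necessary, and the ``$d=r-1$'' alternative fails. Take $r=2$, $\lambda=\mu=(2,0)$, $\nu=(3,2)$: then $\lambda,\mu\le\nu$ and all three $d=1$ inequalities hold, yet $|\lambda|+|\mu|<|\nu|$. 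So (2) must include the case $d=r$, i.e.\ the trace inequality, as in Friedland's theorem.

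The genuine gap is (2)$\Rightarrow$(1), and it is not only a matter of integrality.

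\textit{First}, your argument never uses $\mu\le\nu$, but it has to. Take $\lambda=(0,0)$, $\mu=(1,1)$, $\nu=(1,0)$. Condition (2), with the trace inequality, holds, and so does (3) with $A=0$, $B=I$, $C=\mathrm{diag}(1,0)$; also $\lambda\le\nu$. Yet $c^{\nu}_{\lambda,\mu}(t)=0$. The real $\lambda'$ obtained from (3) is the spectrum $(0,-1)$ of $C-B$, so your polytope is nonempty but contains no partition.

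\textit{Second}, even granting nonnegativity, neither of your routes yields an integral point.
\begin{itemize}
\item The uncrossing step assumes that tight Horn inequalities can be merged along unions and intersections of the sets $I$. This is not an established property of Horn triples: a tight triple couples $I$ to $J$ and $K$, and the Belkale and Knutson--Tao--Woodward results identify essential inequalities, not a lattice structure.
\item In the polyhedral fallback, nonemptiness ``by (3)'' already presupposes Friedland's theorem (2)$\Rightarrow$(3), which you have not proved.
\item Constraint rows with $0/1$ entries and integral right-hand sides do not force integral points; for example, $x_1+x_2=x_2+x_3=x_1+x_3=1$ has only the solution $x=(\tfrac12,\tfrac12,\tfrac12)$.
\item The Knutson--Tao largest-lift argument concerns a \textit{fixed} integral boundary. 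Applying it with one side only bounded above by $\lambda$ would amount to reproving the Anderson--Richmond--Yong theorem itself.
\end{itemize}

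The paper avoids all of this. Since $\lambda\le\nu$, \eqref{eq:th4.3} turns (1) into $\nu\in\supp^3(\lambda,\mu)$, and the paper then applies Theorem~\ref{thm:saturation3}.
\begin{itemize}
\item (2)$\Leftrightarrow$(3) is quoted from Friedland and Fulton.
\item For (3)$\Rightarrow$(1), Fulton's Theorem~5 supplies integral partitions $\lambda'\le\lambda$ \textit{and} $\mu'\le\mu$ with $c^{\nu}_{\lambda',\mu'}\neq0$. Letting both partitions shrink is exactly the situation where integrality is already known.
\item The hypothesis $\mu\le\nu$ then restores $\mu$. We have $\nu\in\supp^1(\mu',\lambda')\subseteq\supp^3(\mu,\lambda')$, so \eqref{eq:th4.3} with the two partitions interchanged gives $\nu\in\supp^2(\mu,\lambda')=\supp^2(\lambda',\mu)\subseteq\supp^3(\lambda',\mu)\subseteq\supp^3(\lambda,\mu)$.
\end{itemize}
This last inclusion is precisely the statement you were trying to reach by rounding.
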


We show that the triple LR-coefficients $c_{\lambda,\mu}^{\nu}(t;y)$ also have saturation property. Moreover,  the Horn's inequalities also control the non-vanishing of triple LR-coefficients.

\begin{theorem}\label{triplesaturation}
For  partitions $\lambda,\mu,\nu$ and any   integer $N>0$, we have 
 \[\text{    $c_{\lambda,\mu}^{\nu}(t;y)\neq 0$\quad if and only if \quad $c_{N\lambda,N\mu}^{N\nu}(t;y)\neq 0$.}\]
\end{theorem}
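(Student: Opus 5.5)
The plan is to reduce triple saturation to ordinary saturation through the description \eqref{eq:th1.1} of the triple support with $i=1$:
$$\supp^3(\lambda,\mu)=\bigcup_{\lambda'\leq\lambda}\supp^1(\lambda',\mu).$$
In words, $c^\nu_{\lambda,\mu}(t;y)\neq0$ if and only if there is a partition $\lambda'\le\lambda$ with $c_{\lambda',\mu}^{\nu}\neq 0$.

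\textbf{The forward direction is formal.} Suppose $c^\nu_{\lambda,\mu}(t;y)\neq 0$, and pick $\lambda'\le\lambda$ with $c^\nu_{\lambda',\mu}\neq0$. By Theorem \ref{thm:saturation1} we get $c^{N\nu}_{N\lambda',N\mu}\neq 0$. (For this direction only the easy semigroup property of LR triples is needed.) Since $N\lambda'\le N\lambda$, \eqref{eq:th1.1} for the triple $(N\lambda,N\mu,N\nu)$ gives $c^{N\nu}_{N\lambda,N\mu}(t;y)\ne0$.

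\textbf{The reverse direction carries the content.} Suppose $c^{N\nu}_{N\lambda,N\mu}(t;y)\neq 0$. By \eqref{eq:th1.1} there is a partition $\kappa\le N\lambda$ with $c^{N\nu}_{\kappa,N\mu}\neq0$. We must produce an \emph{integral} partition $\lambda'\le\lambda$ with $c^\nu_{\lambda',\mu}\neq0$.

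Step 1 (real solution). Put $\alpha=\kappa/N$. By Knutson--Tao, $c^{N\nu}_{\kappa,N\mu}\ne0$ means that $(\kappa,N\mu,N\nu)$ satisfies the Horn equalities and inequalities. These conditions are homogeneous, so $(\alpha,\mu,\nu)$ lies in the real Horn cone. Hence the polytope
$$P=\{\alpha\in\mathbb R^k:\ (\alpha,\mu,\nu)\text{ satisfies Horn's conditions},\ \alpha_i\le\lambda_i\ \text{for all } i\}$$
is nonempty. Equivalently, via the Hermitian description in Theorem \ref{yongthm1.3}, there are Hermitian $A',B,C$ with $A'+B=C$, spectra $(\alpha,\mu,\nu)$, and $\alpha\le\lambda$.

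Step 2 (integral solution). We need an integer point of $P$. By Theorem \ref{thm:saturation1}, the integer points of the Horn polytope in $\alpha$ (with $\mu,\nu$ fixed) are exactly the $\lambda'$ with $c^\nu_{\lambda',\mu}\ne0$, so an integer point of $P$ finishes the proof.

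\textbf{Main obstacle: integrality in Step 2.} Cutting the Horn polytope by the half-spaces $\alpha_i\le\lambda_i$ could a priori create non-integral vertices, so saturation alone does not suffice. I would pass to the hive (or honeycomb) model. Consider the polytope of real hives whose $\mu$- and $\nu$-boundaries are fixed integers and whose third boundary is free, subject to the extra constraints that its boundary differences satisfy $\alpha_i\le\lambda_i$. These added constraints are again difference inequalities between boundary labels with integer right-hand sides, i.e.\ of the same form as the hive rhombus inequalities. I would then rerun the Knutson--Tao argument: take a vertex maximizing a generic linear functional (the largest lift), and show that its flatspaces/honeycomb degenerations force all entries to be integral, as in their proof. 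The new boundary constraints only add integral facets adjacent to the boundary, so they should not break the integrality argument. This gives an integral hive, hence an integral $\lambda'\le\lambda$ with $c^\nu_{\lambda',\mu}\ne0$, and so $c^\nu_{\lambda,\mu}(t;y)\neq0$ by \eqref{eq:th1.1}.

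\textbf{Fallback.} If adapting Knutson--Tao proves delicate, I would use the $i=2$ case of \eqref{eq:th1.1} together with Theorems \ref{thm:saturation2} and \ref{yongthm1.3} instead. The aim would be to characterize $\supp^3(\lambda,\mu)$ directly as the set of $\nu\ge\mu$ satisfying the Horn \emph{inequalities} (with $\ge$) for $(\lambda,\mu,\nu)$. Such a description is manifestly invariant under scaling by $N$, which would give the theorem at once; the work would then be to show that any such $\nu$ admits some integral $\lambda'\le\lambda$ with $\lambda'\le\nu$ and $c^\nu_{\lambda',\mu}(t)\ne0$.
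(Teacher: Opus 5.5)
Your forward direction is correct. The gap is Step 2 of the reverse direction. It carries the whole content of the theorem, and it is only asserted. ``Rerun the Knutson--Tao argument; the new constraints only add integral facets, so they should not break it'' is not a valid principle. The integrality of the chosen hive in Knutson--Tao does not follow formally from the defining inequalities having integral data; hive polytopes can have non-integral vertices. It comes instead from a delicate analysis of the largest lift of a honeycomb whose entire boundary is fixed. Once one side of the boundary is freed and only bounded by $\alpha_i\le\lambda_i$, that analysis has to be redone, and you have not done it. (You would also need $\alpha_k\ge 0$ in $P$, or an integer point need not be a partition.) As written, the reverse implication is not established. Your fallback has the same status: it names the right target, a scale-invariant description of $\supp^3(\lambda,\mu)$, but defers exactly the integrality step.

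The paper's route is your fallback, with the integral step supplied. Theorem~\ref{thm:saturation3} shows that for $\mu\le\nu$ (forced by Lemma~\ref{fangda1}), $c^\nu_{\lambda,\mu}(t;y)\ne0$ iff there are Hermitian $A,B,C$ with spectra $\lambda,\mu,\nu$ and $A+B\ge C$. This condition is visibly invariant under scaling. For the hard direction, the paper uses Fulton's theorem on majorized Hermitian matrices, which gives integral $\lambda'\le\lambda$ and $\mu'\le\mu$ with $c^\nu_{\lambda',\mu'}\ne0$. It then moves the shrinking off $\mu$ using $\nu\ge\mu$ and the separated-descents identity \eqref{eq:th4.3}: from $\nu\in\supp^1(\mu',\lambda')\subseteq\supp^3(\mu,\lambda')$ it gets $\nu\in\supp^2(\mu,\lambda')=\supp^2(\lambda',\mu)\subseteq\supp^3(\lambda,\mu)$.

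Your own setup can also be finished in a few lines with Theorem~\ref{yongthm1.3}, with no hive analysis. Step 1 gives Hermitian $A'+B=C$ with spectra $\alpha\le\lambda$, $\mu$, $\nu$. Since $\mu$ is a partition, $B\ge 0$, so $A'\le C$, and Weyl monotonicity gives $\alpha\le\nu$. Hence $\alpha\le\lambda':=\min(\lambda,\nu)$ entrywise, and $\lambda'$ is an integral partition. Raising the eigenvalues of $A'$ to $\lambda'$ in the same eigenbasis gives $\widetilde A\ge A'$ with spectrum $\lambda'$ and $\widetilde A+B\ge C$. Since $\lambda'\le\nu$ and $\mu\le\nu$, Theorem~\ref{yongthm1.3} yields $c^\nu_{\lambda',\mu}(t)\ne0$. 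Then \eqref{eq:th1.1} with $i=2$ gives $c^\nu_{\lambda,\mu}(t;y)\ne0$.
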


\begin{theorem}\label{thm:saturation3}
Let $\lambda,\mu,\nu\in \mathbb{P}$ such that $\mu\leq \nu$. 
The following are equivalent:
\begin{enumerate}
    \item 
    $c_{\lambda,\mu}^{\nu}(t;y)\neq 0$. 

    \item For every $d<r$, and every triple of  subsets $I,J,K\subseteq [r]$
    of cardinality $d$ such that $c_{\tau(I),\tau(J)}^{\tau(K)}>0,$
    one has
    \begin{equation}
    \sum_{i\in I}\lambda_i
    +
    \sum_{j\in J}\mu_j
    \geq
    \sum_{k\in K}\nu_k,
    \end{equation}
where $\tau(K)=(k_d-d,\ldots,k_1-1)$ for a $d$-subset $K=\{k_1<\cdots<k_d\}\subseteq [r].$

    \item There exist $r\times r$ Hermitian matrices $A$, $B$, and $C$ with eigenvalues $\lambda$, $\mu$, and $\nu$, respectively, such that $
    A+B\geq C.$
\end{enumerate}
\end{theorem}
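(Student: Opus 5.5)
Theorem \ref{thm:saturation3} should follow by reducing the triple support to the double support and then quoting Theorem \ref{yongthm1.3}. The bridge is \eqref{eq:th1.1} with $i=2$:
$$c_{\lambda,\mu}^{\nu}(t;y)\neq 0 \iff \text{there is } \lambda'\le\lambda \text{ with } c_{\lambda',\mu}^{\nu}(t)\neq 0.$$
We use $i=2$ rather than $i=1$ because Theorem \ref{yongthm1.3} is a statement about double coefficients. We work with $r=k$ throughout, padding partitions with zeros.

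\textbf{(1)$\Rightarrow$(2).} Pick $\lambda'\le\lambda$ with $c_{\lambda',\mu}^{\nu}(t)\neq0$. We need $\lambda'\le\nu$ to invoke Theorem \ref{yongthm1.3}, and Lemma \ref{fangda1} applied with the roles of the two factors swapped gives exactly this. Since $c_{\lambda',\mu}^{\nu}(t)=c_{\mu,\lambda'}^{\nu}(t)$, the coefficient lies in the double support, so $w_{\lambda'}\le w_\nu$. Together with the hypothesis $\mu\le\nu$, Theorem \ref{yongthm1.3} then yields the Horn-type inequalities for $(\lambda',\mu,\nu)$. Because $\lambda'_i\le\lambda_i$ for every $i$, each inequality only improves when $\lambda'$ is replaced by $\lambda$, which gives (2) for $(\lambda,\mu,\nu)$.

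\textbf{(2)$\Leftrightarrow$(3).} This equivalence needs no LR input. Condition (3) says there are Hermitian $A,B,C$ with $A+B-C\ge0$. Taking $D=A+B-C$, this is equivalent to $\nu$ being the spectrum of $A+B-D$ for some positive semidefinite $D$. By Weyl monotonicity, this in turn is equivalent to the existence of some $\rho\le\lambda$ (as spectra) such that $\nu$ is a spectrum of $A'+B$ with $\mathrm{spec}(A')=\rho$ and $A'\le A$. Alternatively, the equivalence can be obtained by citing the corresponding step in \cite{ARY}, since (2) and (3) are literally the same conditions as in Theorem \ref{yongthm1.3}. Either way it is the same statement as there and can be quoted.

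\textbf{(2)$\Rightarrow$(1).} This is the main step: from the inequalities (2) for $(\lambda,\mu,\nu)$ we must produce an integer partition $\lambda'\le\lambda$ with $\lambda'\le\nu$ satisfying (2) for $(\lambda',\mu,\nu)$. Theorem \ref{yongthm1.3} then gives $c_{\lambda',\mu}^{\nu}(t)\ne0$, and (1) follows from \eqref{eq:th1.1}.

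The natural candidate is $\lambda'=\lambda\wedge\nu$, the coordinatewise minimum, which is again a partition and satisfies $\lambda'\le\lambda$ and $\lambda'\le\nu$. The obstacle is checking that the Horn inequalities survive truncation. For a given $(I,J,K)$, split $I=I_1\sqcup I_2$, where $I_1$ consists of the indices $i$ with $\lambda_i\le\nu_i$. On $I_1$ nothing changes, and on $I_2$ we have $\lambda'_i=\nu_i$. We must then show
$$\sum_{I_1}\lambda_i+\sum_{I_2}\nu_i+\sum_J\mu_j\ge\sum_K\nu_k.$$
The plan is to derive this from the inequality for a smaller Horn triple $(I_1,J',K')$ together with the monotonicity of $\nu$ and the relation $\mu\le\nu$. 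If this combinatorial reduction resists, fall back on the matrix picture. Use (3) to get $A+B\ge C$ with these spectra, replace $A$ by a compression $A'\le A$ chosen so that the spectrum of $A'$ is integral, dominated by both $\lambda$ and $\nu$, and still satisfies $A'+B\ge C$. Then apply Theorem \ref{yongthm1.3}(3)$\Rightarrow$(1).

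Finally, Theorem \ref{triplesaturation} follows from this equivalence: condition (2) is invariant under $(\lambda,\mu,\nu)\mapsto(N\lambda,N\mu,N\nu)$, and so is the hypothesis $\mu\le\nu$. When $\mu\not\le\nu$, both coefficients vanish by Lemma \ref{fangda1}.
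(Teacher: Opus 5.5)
There is a genuine gap in (2)$\Rightarrow$(1), which you yourself call the main step. Nothing in the proposal shows that $\lambda\wedge\nu$, or any integral $\lambda'\le\lambda$ with $\lambda'\le\nu$, satisfies (2) or (3) for $(\lambda',\mu,\nu)$.

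The combinatorial route is only a plan. For each triple $(I,J,K)$ with $c_{\tau(I),\tau(J)}^{\tau(K)}>0$, you would need a smaller triple $(I_1,J',K')$ that again has nonzero LR coefficient and absorbs the indices in $I_2$. You give no way to produce one.

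The matrix fallback simply postulates an $A'\le A$ with integral spectrum dominated by $\lambda$ and $\nu$ and with $A'+B\ge C$. You do not show that such an $A'$ exists. So as written the implication into (1) is unproved, and this is exactly the direction where the hypothesis $\mu\le\nu$ has to do real work.

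The missing observation is short and is already latent in your (2)$\Leftrightarrow$(3) paragraph. Given $A+B\ge C$, set $A_0=C-B$. Then $A_0\le A$. Since $\mu$ is a partition, $B\ge 0$, so also $A_0\le C$. By Weyl monotonicity the spectrum $\rho$ of $A_0$ satisfies $\rho_i\le\lambda_i$ and $\rho_i\le\nu_i$ for all $i$, i.e.\ $\rho\le\lambda\wedge\nu$. Replace the eigenvalues of $A_0$ by $\lambda\wedge\nu$ in the same eigenbasis. This gives $A''\ge A_0$ with spectrum $\lambda\wedge\nu$ and $A''+B\ge C$. Since $\lambda\wedge\nu\le\nu$ and $\mu\le\nu$, (3)$\Rightarrow$(1) of Theorem~\ref{yongthm1.3} gives $c^{\nu}_{\lambda\wedge\nu,\mu}(t)\neq 0$, and \eqref{eq:th1.1} then gives (1).

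Completed this way, your argument differs from the paper's. The paper uses \cite[Theorem~5]{Fulton00} to get $\lambda'\le\lambda$, $\mu'\le\mu$ with $c^{\nu}_{\lambda',\mu'}\neq 0$. It then uses commutativity, \eqref{eq:th1.1}, and the separated-descents identity \eqref{eq:th4.3} (where $\mu\le\nu$ enters) to land in $\supp^2(\lambda',\mu)$. You instead use the double saturation result of \cite{ARY} as a black box in place of \eqref{eq:th4.3} and Fulton's theorem.

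Your (1)$\Rightarrow$(2) is fine. However, the claim that (2)$\Leftrightarrow$(3) ``needs no LR input'' is wrong. Your Weyl sentence only rephrases (3); the equivalence with the Horn-type inequalities is Friedland's theorem, which rests on Klyachko and Knutson--Tao. Cite it from \cite{Fri00,Fulton00} directly rather than from Theorem~\ref{yongthm1.3}, which is stated under the extra hypothesis $\lambda\le\nu$.
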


\begin{proof}
\noindent
$(1)\Rightarrow(3)$. 
By  \eqref{eq:th1.1}, there exists a partition $\lambda'\leq \lambda$ such that $\nu\in \supp^1(\lambda',\mu)$. 
By Horn's Theorem, 
there exist $n\times n$ Hermitian matrices $A'$, $B$, and $C$ with eigenvalues $\lambda'$, $\mu$, and $\nu$, respectively, such that $A'+B=C$. 
By diagonalizing $A'$, we can find an Hermitian matrix $A$ with eigenvalues $\lambda$ such that $A'\leq A$. Therefore, $A+B\geq A'+B=C$. 

\medskip

\noindent
$(3)\Rightarrow(1)$. By \cite[Theorem~5, $(2)\Leftrightarrow(6)$]{Fulton00}, there exist partitions $\lambda'\leq \lambda$ and $\mu'\leq \mu$ such that $\nu\in \supp^1(\lambda',\mu')$. 
By \eqref{eq:1<2<3} and \eqref{eq:th1.1}, 
we have 
$$\nu\in 
\supp^1(\lambda',\mu')
= \supp^1(\mu',\lambda')
\subseteq \supp^3(\mu',\lambda')
\subseteq \supp^3(\mu,\lambda').$$
Since we assumed $\mu\leq \nu$, 
by  \eqref{eq:1<2<3} and \eqref{eq:th4.3}, we indeed have 
\begin{align*}
    \nu&\in 
\supp^{3}(\mu,\lambda')\cap \{\nu\in \mathbb{P}:\nu\ge \mu\}\\
&=\supp^2(\mu,\lambda')
=\supp^2(\lambda',\mu)\subseteq \supp^3(\lambda',\mu)
\subseteq \supp^3(\lambda,\mu).
\end{align*}

\noindent
$(2)\Leftrightarrow(3)$.
This equivalence  follows directly from  \cite{Fri00} and \cite[Theorem 1]{Fulton00}.
\end{proof}

By Lemma \ref{fangda1}, $c_{\lambda,\mu}^{\nu}(t;y)=0$ unless $\mu\leq \nu$; see also \cite[Theorem~3.1]{MS}.
So Theorem \ref{thm:saturation3} completely characterizes the set $\supp^3(\lambda,\mu)$. 
Moreover, Theorem \ref{yongthm1.3} (\cite[Theorem~1.3]{ARY}) follows from \eqref{eq:th4.3}.

\begin{proof}[Proof of Theorem \ref{triplesaturation}]
This is an  immediate corollary of the equivalence (1) $\Leftrightarrow$ (3) above. 
\begin{align*}
\nu\in \supp^3(\lambda,\mu)\iff & 
\nu\geq \mu \text{ and there exists Hermitian matrices $A,B,C$}\\
& \text{with eigenvalues $\lambda,\mu,\nu$ such that $A+B\geq C$}\\
\iff &
N\nu\geq N\mu \text{ and there exists Hermitian matrices $A',B',C'$}\\
& \text{with eigenvalues $N\lambda,N\mu,N\nu$ such that $A'+B'\geq C'$}\\
\iff &N\nu\in \supp^3(N\lambda,N\mu). 
\qedhere
\end{align*}
\end{proof}

\end{document}